\newtheorem{theorem}{Theorem}[section]
\newtheorem{lemma}[theorem]{Lemma}
\newtheorem{remark}[theorem]{Remark}
\newtheorem{example}[theorem]{Example}
\newtheorem{corollary}[theorem]{Corollary}
\newenvironment{proof}[1][Proof]{\textbf{#1.} }{\ \rule{0.5em}{0.5em}}
\newenvironment{acknowledgement}{\smallskip{\sc Acknowledgments.}\rm}{\smallskip}
\newenvironment{notation}{\smallskip{\sc Notation.}\rm}{\smallskip}
\def\RM{\rm}
\def\limfunc#1{\mathop{\mathrm{#1}}}
\def\func#1{\mathop{\mathrm{#1}}\nolimits}
\def\subjclass#1{\par{\footnotesize {\bf 2010 Mathematics Subject Classification:} #1}}
\def\keywords#1{\par{\footnotesize {\bf Keywords:} #1}}
\def\Qlb#1{#1}
\def\Qcb#1{#1}
\def\FRAME#1#2#3#4#5#6#7#8
\begin{document}

\title{Heat kernel estimates on connected sums of parabolic manifolds}
\author{Alexander Grigor'yan\thanks{%
Partially supported by SFB 701 of German Research Council} \\
Department of Mathematics \\
University of Bielefeld\\
33501 Bielefeld, Germany \\
grigor@math.uni-bielefeld.de \and Satoshi Ishiwata\thanks{%
Partially supported by JSPS, KAKENHI 25800034} \\
Department of Mathematical Sciences\\
Yamagata University\\
Yamagata 990-8560, Japan\\
ishiwata@sci.kj.yamagata-u.ac.jp \and Laurent Saloff-Coste \thanks{%
Partially supported by NSF grant DMS--1404435} \\
Department of Mathematics\\
Cornell University\\
Ithaca, NY, 14853-4201, USA\\
lsc@math.cornell.edu}
\date{July 2016}
\maketitle

\begin{abstract}
We obtain matching two sided estimates of the heat kernel on a connected sum
of parabolic manifolds, each of them satisfying the Li-Yau estimate. 
The key result is the on-diagonal upper bound of the heat kernel at a
central point. Contrary to the non-parabolic case (which was settled in \cite%
{G-SC ends}), the on-diagonal behavior of the heat kernel in our case is
determined by the end with the \textit{maximal} volume growth function. As
examples, we give explicit heat kernel bounds on the connected sums $\mathbb{%
R}^{2}\#\mathbb{R}^{2}$ and $\mathcal{R}^{1}\#\mathbb{R}^{2}$ where $%
\mathcal{R}^{1}=\mathbb{R}_{+}\times \mathbb{S}^{1}.$
\end{abstract}

\subjclass{Primary 35K08, Secondary 58J65, 58J35}
\keywords{heat kernel, manifold with ends, parabolic manifold, integrated
resolvent}
\tableofcontents


\section{Introduction}

\label{Introduction}

Let $M$ be a Riemannian manifold. The heat kernel $p(t,x,y)$ on $M$ is the
minimal positive fundamental solution of the heat equation $\partial
_{t}u=\Delta u$ on $M$ where $u=u\left( t,x\right) $, $t>0$, $x\in M$ and $%
\Delta $ is the (negative definite) Laplace-Beltrami operator on $M$. For
example, in $\mathbb{R}^{n}$ the heat kernel is given by the classical
Gauss-Weierstrass formula%
\begin{equation*}
p(t,x,y)=\frac{1}{(4\pi t)^{n/2}}\exp \left( -\frac{|x-y|^{2}}{4t}\right) .
\end{equation*}%
The heat kernel is sensitive to the geometry of the underlying manifold $M$,
which results in numerous applications of this notion in differential
geometry. On the other hand, the heat kernel has a probabilistic meaning: $%
p(t,x,y)$ is the transition density of Brownian motion $(\{X_{t}\}_{t\geq0},%
\{\mathbb{P}_{x}\}_{x \in M})$ on $M$. Namely, for any Borel set $A\subset M$%
, we have%
\begin{equation*}
\mathbb{P}_{x}(X_{t}\in A)=\int_{A}p(t,x,y)dy,
\end{equation*}%
where $\mathbb{P}_{x}(X_{t}\in A)$ is the probability that Brownian particle
starting at the point $x$ will be found in the set $A$ in time $t$.

From now on let us assume that the manifold $M$ is non-compact and
geodesically complete. Dependence of the long time behavior of the heat
kernel on the large scale geometry of $M$ is an interesting and important
problem that has been intensively studied during the past few decades by
many authors (see, for example, \cite{Davies CTM}, \cite{G AMS}, \cite{SC
LNS} and references therein). In the case when the Ricci curvature of $M$ is
non-negative, P.Li and S.-T.Yau proved in their pioneering work \cite{Li-Yau}
the following estimate, for all $x,y\in M$ and $t>0$:%
\begin{equation}
p(t,x,y)\asymp \frac{C}{V(x,\sqrt{t})}\exp \left( -b\frac{d^{2}(x,y)}{t}%
\right) ,  \tag{$LY$}  \label{LY type}
\end{equation}%
where the sign $\asymp $ means that both $\leq $ and $\geq $ hold but with
different values of positive constants $C$ and $b$, $V(x,r)$ is the
Riemannian volume of the geodesic ball of radius $r$ centered at $x\in M$,
and $d\left( x,y\right) $ is the geodesic distance between the points $x,y$.

The estimate $($\ref{LY type}$)$ is satisfied also for the heat kernel of
uniformly elliptic operators in divergence form in $\mathbb{R}^{n}$ as was
proved by Aronson \cite{Aronson}. It was proved by Fabes and Stroock \cite%
{Fabes-Stroock}, that the estimate $($\ref{LY type}$)$ is equivalent to the
uniform parabolic Harnack inequality (see also \cite{SC LNS}). Grigor'yan 
\cite{Grigoryan 1991} and Saloff-Coste \cite{SC 1992}, \cite{SC LNS} proved
that $($\ref{LY type}$)$ is equivalent to the conjunction of the Poincar\'{e}
inequality and the volume doubling property.

One of the simplest example of a manifold where $($\ref{LY type}$)$ fails is
the hyperbolic space $\mathbb{H}^{n}.$ A more interesting counterexample was
constructed by Kuz'menko and Molchanov \cite{Kuz'menko-Molchanov}: they
showed that the connected sum $\mathbb{R}^{n}\#\mathbb{R}^{n}$ of two copies
of $\mathbb{R}^{n}$, $n\geq 3$, admits a non-trivial bounded harmonic
function, which implies that the Harnack inequality and, hence, $($\ref{LY
type}$)$ cannot be true. Benjamini, Chavel and Feldman \cite%
{Benjamini-Chavel-Feldman} explained this phenomenon by a bottleneck-effect:
if $x$ and $y$ belong to the different ends of the manifold $\mathbb{R}^{n}\#%
\mathbb{R}^{n}$ and $\left\vert x\right\vert \approx \left\vert y\right\vert
\approx \sqrt{t}\rightarrow \infty $ then $p\left( t,x,y\right) \ll t^{-n/2}$
where $t^{-n/2}$ is predicted by the right hand side of $($\ref{LY type}$)$.
This phenomenon is especially transparent from probabilistic viewpoint:
Brownian particle can go from $x$ to $y$ only through the central part,
which reduces drastically the transition density (see Fig. \ref{rn+rn}). A
similar phenomenon was observed by B.Davies \cite{Davies 97} on a model case
of one-dimensional line complex.

\FRAME{ftbphFU}{2.8625in}{1.5186in}{0pt}{\Qcb{Brownian path goes from $x$ to 
$y$ via the bottleneck}}{\Qlb{rn+rn}}{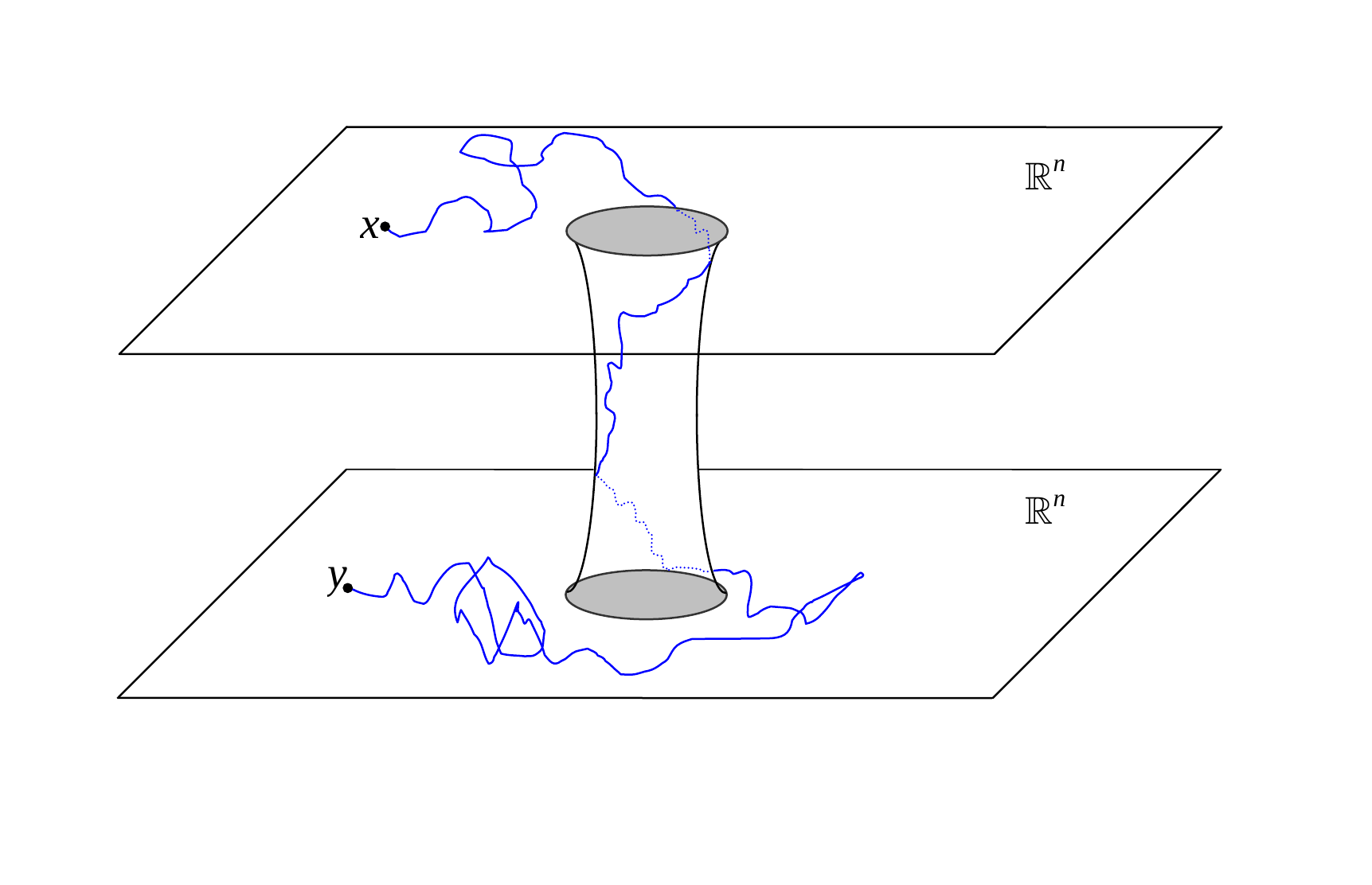}{\special{language
"Scientific Word";type "GRAPHIC";maintain-aspect-ratio TRUE;display
"USEDEF";valid_file "F";width 2.8625in;height 1.5186in;depth
0pt;original-width 6.3183in;original-height 3.3356in;cropleft "0";croptop
"1";cropright "1";cropbottom "0";filename 'rn+rn.eps';file-properties
"XNPEU";}}

Based on these early works, the first and the third authors of the present
paper started a project on heat kernel bounds on connected sums of
manifolds, provided each of them satisfies the Li-Yau estimate $($\ref{LY
type}$)$. The results of this study are published in a series \cite{G-SC
letter}, \cite{G-SC Dirichlet}, \cite{G-SC hitting}, \cite{G-SC ends}, and 
\cite{G-SC FK}. In particular, they obtained in \cite{G-SC ends} matching
upper and lower estimates of heat kernels on connected sums of manifolds
when at least one of them is \textit{non-parabolic}. Recall that a manifold $%
M$ called \textit{parabolic} if Brownian motion on $M$ is recurrent, and 
\textit{non-parabolic} otherwise. There are several equivalent definitions
of parabolicity in different terms (see, for example, \cite{G 1999}).

In this paper we complement the results \cite{G-SC ends} by proving
two-sided estimates of heat kernels on connected sums of \textit{parabolic}
manifolds. The detailed statements are given in the next section. We
illustrate our results on the following two examples.

Consider first the manifold $M=\mathcal{R}^{1}\#\mathbb{R}^{2}$, where $%
\mathcal{R}^{1}=\mathbb{R}_{+}\times \mathbb{S}^{1}$ (see Fig. \ref{r1+r2}%
). \FRAME{ftbphFU}{2.8115in}{1.6604in}{0pt}{\Qcb{Connected sum $\mathcal{R}%
^{1}\#\mathbb{R}^{2}$}}{\Qlb{r1+r2}}{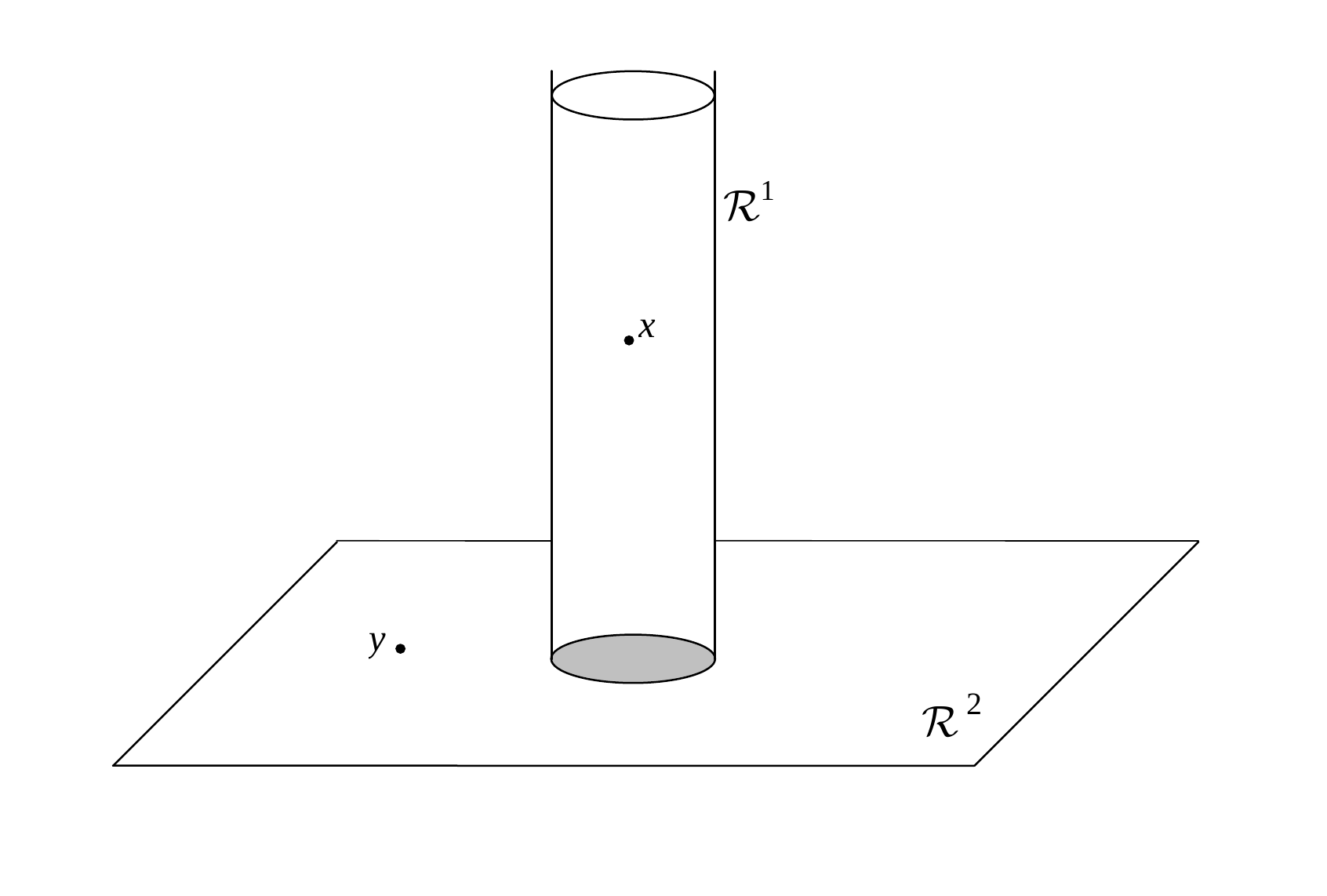}{\special{language
"Scientific Word";type "GRAPHIC";maintain-aspect-ratio TRUE;display
"USEDEF";valid_file "F";width 2.8115in;height 1.6604in;depth
0pt;original-width 6.0026in;original-height 3.535in;cropleft "0";croptop
"1";cropright "1";cropbottom "0";filename 'jpic8.eps';file-properties
"XNPEU";}} For $x\in M$, define $|x|:=d(x,K)+e$, where $K\subset M$ is the
central part of $M$. Then we obtain that for $x\in \mathcal{R}^{1}$, $y\in 
\mathbb{R}^{2}$ and $t>1$ 
\begin{equation*}
p(t,x,y)\asymp \left\{ 
\begin{array}{ll}
\frac{1}{t}e^{-b\frac{d^{2}(x,y)}{t}} & \mbox{if }\left\vert y\right\vert >%
\sqrt{t}, \\ 
\frac{1}{t}\left( 1+\frac{|x|}{\sqrt{t}}\log \frac{e\sqrt{t}}{|y|}\right)  & %
\mbox{if }\left\vert x\right\vert ,\left\vert y\right\vert \leq \sqrt{t}, \\ 
\frac{1}{t}\log \frac{e\sqrt{t}}{|y|} & \mbox{if }\left\vert x\right\vert >%
\sqrt{t}\geq \left\vert y\right\vert .%
\end{array}%
\right. 
\end{equation*}%
In particular, if $|x|$, $|y|$ are bounded and $t\rightarrow \infty $, then 
\begin{equation*}
p(t,x,y)\approx \frac{1}{t}.
\end{equation*}%
If $\left\vert x\right\vert \approx \sqrt{t}\rightarrow \infty $ and $%
\left\vert y\right\vert $ remains bounded, then 
\begin{equation*}
p(t,x,y)\approx \frac{\log t}{t}.
\end{equation*}

Consider now the manifold $M=\mathbb{R}^{2}\#\mathbb{R}^{2}$, or,
equivalently, a catenoid (see Fig. \ref{catenoid}).   \FRAME{ftbphFU}{%
2.1586in}{1.5134in}{0pt}{\Qcb{Catenoid}}{\Qlb{catenoid}}{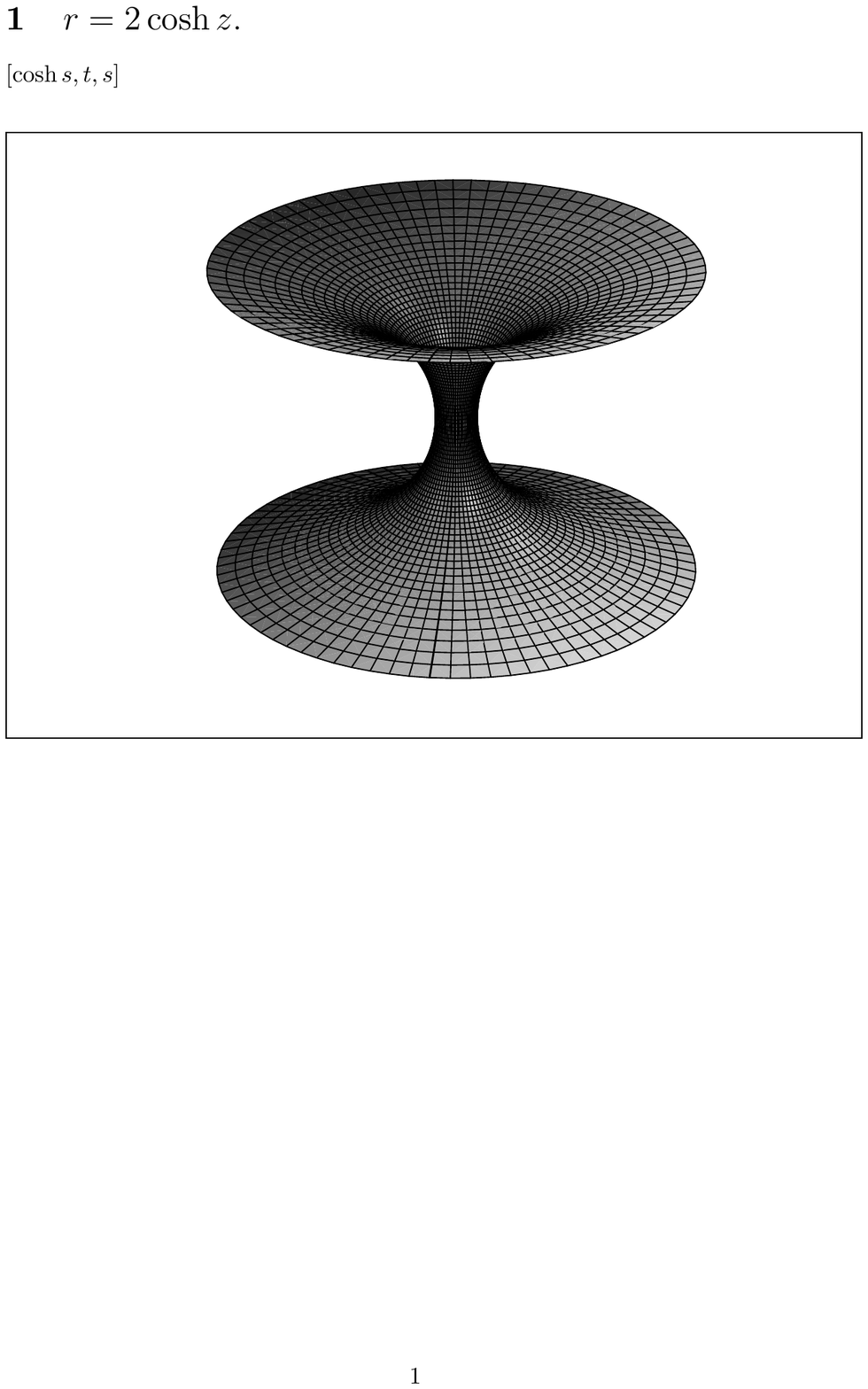}{%
\special{language "Scientific Word";type "GRAPHIC";maintain-aspect-ratio
TRUE;display "USEDEF";valid_file "F";width 2.1586in;height 1.5134in;depth
0pt;original-width 4.6428in;original-height 3.2467in;cropleft "0";croptop
"1";cropright "1";cropbottom "0";filename 'catenoid.eps';file-properties
"XNPEU";}}

Then we have the following estimate for all $x,y$ lying in different sheets
and for $t>1$: 
\begin{equation*}
p(t,x,y)\asymp \left\{ 
\begin{array}{ll}
\frac{1}{t\log ^{2}t}\left( \log t+\log ^{2}\sqrt{t}-\log |x|\log |y|\right) 
& \mbox{if }|x|,|y|\leq \sqrt{t}, \\ 
\frac{1}{t\log t}\log \frac{e\sqrt{t}}{|y|}e^{-b\frac{d^{2}(x,y)}{t}} & %
\mbox{if }|y|\leq \sqrt{t}<|x|, \\ 
\frac{1}{t\log t}\log \frac{e\sqrt{t}}{|x|}e^{-b\frac{d^{2}(x,y)}{t}} & %
\mbox{if }|x|\leq \sqrt{t}<|y|, \\ 
\frac{1}{t}\left( \frac{1}{\log |x|}+\frac{1}{\log |y|}\right) e^{-b\frac{%
d^{2}(x,y)}{t}} & \mbox{if }|x|,|y|>\sqrt{t}.%
\end{array}%
\right. 
\end{equation*}

In particular, if $\vert x \vert$, $\vert y \vert$ are bounded and $t
\rightarrow \infty$, then 
\begin{equation*}
p(t,x,y) \approx \frac{1}{t}.
\end{equation*}

If $|x|\approx |y|\approx \sqrt{t}\rightarrow \infty $ then 
\begin{equation*}
p(t,x,y)\approx \frac{1}{t\log t}.
\end{equation*}%
The heat kernel estimates on $\mathbb{R}^{2}\#\mathbb{R}^{2}$ was also
obtained in \cite{G-SC ends} by an ad hoc method. In the present paper these
estimates are part of our general Theorem \ref{T1}. We also give further
examples, in particular, the heat kernel estimates on $\mathcal{R}^{1}\#%
\mathcal{R}^{1}\#\mathbb{R}^{2}.$ 

In the next section we introduce necessary definitions and state our main
results. In Section \ref{SecGeneral} we prove some auxiliary results about
the integrated resolvent. In Section \ref{SecOndiag} we prove the main
technical result of this paper -- Theorem \ref{main theorem} about
on-diagonal upper bound of the heat kernel on the connected sum of parabolic
manifolds. Finally, in Section \ref{SecOff} we use Theorem \ref{main theorem}
and the gluing techniques from \cite{G-SC ends} to obtain full off-diagonal
estimates of the heat kernels; they are stated in Theorems \ref{T1}-\ref{T3}
and Corollaries \ref{power} and \ref{Corcases}.

\begin{notation}
Throughout this article, the letters $c,C,b,...$ denote positive constants
whose values may be different at different instances. When the value of a
constant is significant, it will be explicitly stated. The notation $%
f\approx g$ for two non-negative functions $f,g$ means that there are two
positive constants $c_{1},c_{2}$ such that $c_{1}g$ $\leq f\leq c_{2}g$ for
the specified range of the arguments of $\,f$ and $g$.
\end{notation}


\section{Statement of main results and examples}

\label{section 2}

\setcounter{equation}{0}The main result will be stated in a more general
setting of weighted manifolds that is explained below.

\subsection{Weighted manifolds}

Let $M$ be a connected Riemannian manifold of dimension $N$. The Riemannian
metric of $M$ induces the geodesic distance $d(x,y)$ between points $x,y\in
M $ and the Riemannian measure $d\mathrm{vol}.$ Given a smooth positive
function $\sigma $ on $M$, let $\mu $ be the measure on $M$ given by $d\mu
(x)=\sigma (x)d\mathrm{vol}(x)$. The pair $(M,\mu )$ is called a \textit{%
weighted manifold}. Any Riemannian manifold can be considered also as a
weighted manifold with $\sigma \equiv 1$.

The Laplace operator $\Delta $ of the weighted manifold $\left( M,\mu
\right) $ is defined by%
\begin{equation*}
\Delta =\frac{1}{\sigma }\func{div}\left( \sigma \nabla \right) ,
\end{equation*}%
where $\func{div}$ and $\nabla $ are the divergence and the gradient of the
Riemannian metric of $M$. It is easy to see that $\Delta $ is the generator
of the following Dirichlet form%
\begin{equation*}
D\left( f,f\right) =\int_{M}\left\vert \nabla f\right\vert ^{2}d\mu
\end{equation*}%
in $W^{1,2}\left( M,\mu \right) $. The associated heat semigroup $e^{t\Delta
}$ has always a smooth positive kernel $p\left( t,x,y\right) $ that is
called the heat kernel of $\left( M,\mu \right) $. At the same time, $%
p\left( t,x,y\right) $ is the minimal positive fundamental solution of the
corresponding heat equation $\partial _{t}u=\Delta u$ on $M\times \mathbb{R}%
_{+}$ (see \cite{G AMS}). The heat kernel is also the transition probability
density of Brownian motion $\left( \left\{ X_{t}\right\} ,\left\{ \mathbb{P}%
_{x}\right\} \right) $ on $M$ that is generated by $\Delta $.

A weighted manifold $\left( M,\mu \right) $ is called \textit{parabolic} if
any positive superharmonic function on $M$ is constant, and \textit{%
non-parabolic} otherwise. The parabolicity is equivalent to each of the
following properties, that can be regarded as equivalent definitions (see,
for example, \cite{G 1999}):

\begin{enumerate}
\item There exists no positive fundamental solution of $-\Delta .$

\item $\int^{\infty }p\left( t,x,y\right) dt=\infty $ for all/some $x,y\in M$%
.

\item Brownian motion on $M$ is recurrent.
\end{enumerate}

\subsection{Notion of connected sum}

\label{notion}

Let $(M,\mu )$ be a geodesically complete non-compact weighted manifold. Let 
$K\subset M$ be a connected compact subset of $M$ with non-empty interior
and smooth boundary such that $M\setminus K$ has $k$ non-compact connected
components $E_{1},\ldots ,E_{k}$; moreover, assume also that the closures $%
\overline{E}_{i}$ are disjoint. We refer to each $E_{i}$ as an \textit{end}
of $M$. Clearly, $\partial K$ is a disjoint union of $\partial E_{i}$, $%
i=1,...,k$.

Assume also that $E_{i}$ is isometric to the exterior of a compact set $%
K_{i} $ in another weighted manifold $(M_{i},\mu _{i})$. Then we refer to $M$
as the connected sum of $M_{1},...,M_{k}$ and write 
\begin{equation*}
M=M_{1}\#M_{2}\#\cdots \#M_{k}
\end{equation*}%
(see Fig. \ref{figure: connectedsum1}). 
\begin{figure}[tbph]
\begin{center}
\scalebox{0.9}{
\includegraphics{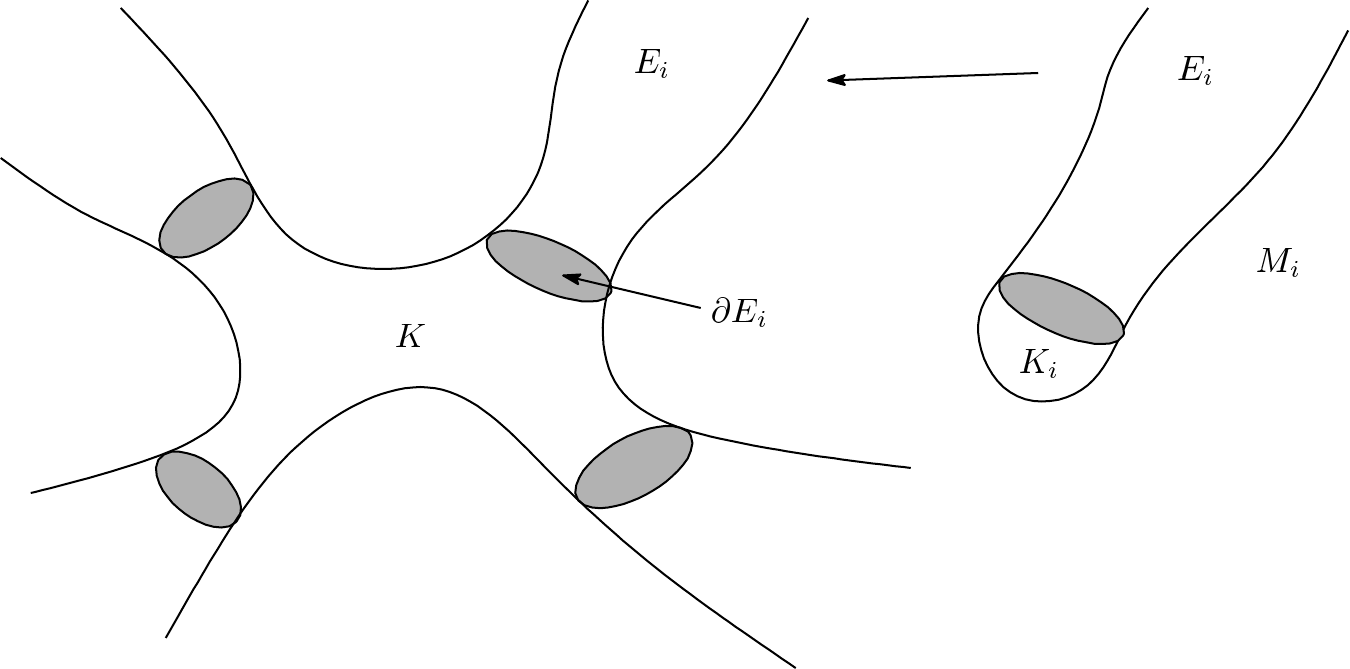} 
}
\end{center}
\caption{Connected sum $M=M_{1}\#M_{2}\cdots \#M_{k}$.}
\label{figure: connectedsum1}
\end{figure}

Denote by $d_{i}$ the geodesic distance on $M_{i}$ and by $B_{i}\left(
x,r\right) $ the geodesic ball in $M_{i}$ of radius $r$ centered at $x\in
M_{i}$. Set also $V_{i}\left( x,r\right) =\mu _{i}\left( B_{i}\left(
x,r\right) \right) $. Fix a reference point $o_{i}\in K_{i}$ and set 
\begin{equation*}
V_{i}(r)=V_{i}(o_{i},r).
\end{equation*}%
In this paper we always assume that every manifold $M_{i}$, $i=1,\ldots ,k$,
satisfies the following four conditions.

\begin{enumerate}
\item[$\left( a\right) $] The heat kernel $p_{i}\left( t,x,y\right) $ of $%
\left( M_{i},\mu _{i}\right) $ satisfies the Li-Yau estimate $($\ref{LY type}%
$)$, that is, 
\begin{equation}
p_{i}\left( t,x,y\right) \asymp \frac{C}{V_{i}\left( x,\sqrt{t}\right) }\exp
\left( -b\frac{d_{i}^{2}\left( x,y\right) }{t}\right) .  \label{LYi}
\end{equation}

\item[$\left( b\right) $] $M_{i}$ is parabolic; under the standing
assumption (\ref{LYi}), the parabolicity of $M_{i}$ is equivalent to 
\begin{equation}
\int^{\infty }\frac{rdr}{V_{i}\left( r\right) }=\infty .  \label{Vpar}
\end{equation}

\item[$\left( c\right) $] $M_{i}$ has \textit{relatively connected annuli},
that is, there exists a positive constant $A>1$ such that for any $r>A^{2}$
and all $x,y\in M_{i}$ with $d_{i}(o_{i},x)=d_{i}(o_{i},y)=r$, there exists
a continuous path from $x$ to $y$ staying in $B_{i}(o_{i},Ar)\setminus
B_{i}(o,A^{-1}r)$. We denote this condition shortly by $\left( RCA\right) $.

\item[$\left( d\right) $] $M_{i}$ is either \textit{critical} or \textit{%
subcritical}; here $M_{i}$ is called critical if, for all large enough $r$,%
\begin{equation*}
V_{i}(r)\approx r^{2},
\end{equation*}%
and subcritical if, for all large enough $r$, 
\begin{equation}
\int_{1}^{r}\frac{sds}{V_{i}(s)}\leq \frac{Cr^{2}}{V_{i}(r)}.
\label{subcritical}
\end{equation}
\end{enumerate}

For example, if $V_{i}(r)\approx r^{\alpha }\log ^{\beta }r$ for some $%
0<\alpha <2$ and $\beta \in \mathbb{R}$, then $M_{i}$ is subcritical. On the
other hand, in the case $V_{i}\left( r\right) \approx \frac{r^{2}}{\log
^{\beta }r}$ with $\beta >0$ the manifold $M_{i}$ is neither critical nor
subcritical, although still parabolic.

Let us describe a class of manifolds satisfying all the hypotheses $\left(
a\right) -\left( d\right) $. For any $0<\alpha \leq 2$ consider a Riemannian 
\textit{model manifold} $\mathcal{R}^{\alpha }:=(\mathbb{R}^{2},g_{\alpha })$%
, where $g_{\alpha }$ is a Riemannian metric on $\mathbb{R}^{2}$ such that,
in the polar coordinates $\left( \rho ,\theta \right) $, it is given for $%
\rho >1$ by 
\begin{equation*}
g_{\alpha }=d\rho ^{2}+\rho ^{2(\alpha -1)}d\theta ^{2}.
\end{equation*}%
For example, if $\alpha =2$ then $g_{2}$ can be taken to be the Euclidean
metric of $\mathbb{R}^{2}$ so that in this case $\mathcal{R}^{2}=\mathbb{R}%
^{2}$. If $\alpha =1$ then $g_{1}=d\rho ^{2}+d\theta ^{2}$ so that the
exterior domain $\left\{ \rho >1\right\} $ of $\mathcal{R}^{1}$ is isometric
to the cylinder $\mathbb{R}_{+}\times \mathbb{S}$ (see Fig. \ref{pic5}).

\FRAME{ftbphFU}{6.3683in}{0.7923in}{0pt}{\Qcb{Model manifold $\mathcal{R}%
^{1} $}}{\Qlb{pic5}}{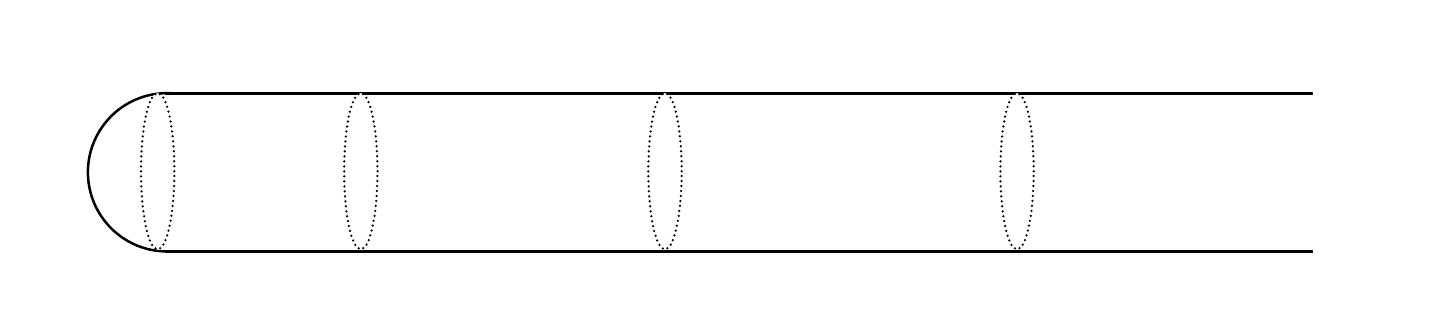}{\special{language "Scientific Word";type
"GRAPHIC";maintain-aspect-ratio TRUE;display "USEDEF";valid_file "F";width
6.3683in;height 0.7923in;depth 0pt;original-width 6.0026in;original-height
0.7231in;cropleft "0";croptop "1";cropright "1";cropbottom "0";filename
'PIC5.eps';file-properties "XNPEU";}}

For a general $0<\alpha <2$, the exterior domain $\left\{ \rho >1\right\} $
of $\mathcal{R}^{\alpha }$ is isometric to a certain surface of revolution
in $\mathbb{R}^{3}$.

Observe that the volume function $V(x,r)$ on $\mathcal{R}^{\alpha }$ admits
for $r>1$ the estimate%
\begin{equation}
V(x,r)\approx \left\{ 
\begin{array}{ll}
r^{\alpha }, & \left\vert x\right\vert <r \\ 
\min \left( r^{2},r\left\vert x\right\vert ^{\alpha -1}\right) , & 
\left\vert x\right\vert \geq r%
\end{array}%
\right. \approx \frac{r^{2}}{1+\frac{r}{(\left\vert x\right\vert +r)^{\alpha
-1}}}  \label{model}
\end{equation}%
(see \cite[Sec. 4.4]{G-SC stability}). In particular, if $x=o$, where $o$ is
the origin of $\mathbb{R}^{2}$, then 
\begin{equation}
V\left( o,r\right) \approx r^{\alpha }.  \label{Va}
\end{equation}%
By \cite[Prop. 4.10]{G-SC stability}, $\mathcal{R}^{\alpha }$ satisfies the
parabolic Harnack inequality and, hence, the Li-Yau estimate $($\ref{LY type}%
$)$. Obviously, $\mathcal{R}^{\alpha }$ satisfies (\ref{Vpar}) and, hence, $%
\mathcal{R}^{\alpha }$ is parabolic. It is easy to see that $\mathcal{R}%
^{\alpha }$ satisfies $\left( RCA\right) $. Note also that $\mathcal{R}%
^{\alpha }$ is critical if $\alpha =2$ and subcritical if $\alpha <2$.
Hence, $\mathcal{R}^{\alpha }$ satisfies all hypotheses $\left( a\right)
-\left( d\right) $.

One can make a similar family of examples also in class of weighted
manifolds. Indeed, for any $\alpha >0$ consider in $\mathbb{R}^{2}$ the
following measure

\begin{equation*}
d\mu _{\alpha }=\left( 1+\left\vert x\right\vert ^{2}\right) ^{\frac{\alpha 
}{2}-1}dx.
\end{equation*}%
It is easy to see that $\left( \mathbb{R}^{2},\mu _{\alpha }\right) $
satisfies (\ref{Va}). The Li-Yau estimate on $\left( \mathbb{R}^{2},\mu
_{\alpha }\right) $ holds by \cite[Prop. 4.9]{G-SC stability}. Hence, $%
\left( \mathbb{R}^{2},\mu _{\alpha }\right) $ satisfies all the hypotheses $%
\left( a\right) -\left( d\right) $ provided $0<\alpha \leq 2$.

Returning to the general setting, let us mention that the hypotheses $\left(
a\right) ,\left( b\right) ,\left( c\right) $ are essential for our main
result, whereas $\left( d\right) $ is technical. Probably, the method of
proof will work also without assuming $\left( d\right) $ but, even if that
is the case, the necessary computations will become much more technical and
complicated. So, we prefer to impose here the additional condition $\left(
d\right) $ to simplify the computational part of the proof, which even under 
$\left( d\right) $ remains quite involved.

Observe also that the condition $\left( b\right) $ follows from $\left(
d\right) $. Indeed, if the integral (\ref{Vpar}) converges then by (\ref%
{subcritical}) $V_{i}\left( r\right) \leq Cr^{2}$, which implies the
divergence of the integral in (\ref{Vpar}). However, for the aforementioned
reason, we state $\left( b\right) $ independently of $\left( d\right) $.

In fact, in the subcritical case we have%
\begin{equation}
V_{i}\left( r\right) =o\left( r^{2}\right) \ \ \text{as }r\rightarrow \infty
,  \label{or2}
\end{equation}%
as it follows from (\ref{Vpar}) and (\ref{subcritical}). Moreover,
substituting (\ref{or2}) to the left hand side of (\ref{subcritical}), we
obtain that, in the subcritical case, 
\begin{equation}
V_{i}\left( r\right) =o\left( \frac{r^{2}}{\log r}\right) \ \text{as }%
r\rightarrow \infty .  \label{or2logr}
\end{equation}

\subsection{On-diagonal estimates}

Denote by $d\left( x,y\right) $ the geodesic distance between points $x,y\in
M$ and by $V\left( x,r\right) $ the Riemannian volume of the geodesic ball
on $M$ of radius $r$ centered at $x\in M$. Fix a reference point $o\in K$
and set $V(r)=V(o,r)$. Set also 
\begin{equation*}
V_{\max }(r)=\max_{1\leq i\leq k}V_{i}(r).
\end{equation*}%
It is easy to see that, for all $r>0$, 
\begin{equation*}
V(r)\approx V_{1}(r)+V_{2}(r)+\cdots +V_{k}(r)\approx V_{\max }(r).
\end{equation*}%
The first main result of this paper is as follows.

\begin{theorem}
\label{main theorem} Let $M=M_{1}\#\cdots \#M_{k}$ be a connected sum of
non-compact complete manifolds $M_{1},\ldots ,M_{k}$. Assume that each $%
M_{i} $ is parabolic and satisfies {$($\ref{LY type}$)$} and $\left(
RCA\right) $. We also assume that each $M_{i}$ is either critical or
subcritical. Then we have 
\begin{equation}
p(t,o,o)\approx \frac{1}{V_{\max }(\sqrt{t})}\approx \frac{1}{V(\sqrt{t})},
\label{ptoo}
\end{equation}%
for all $t>0$.
\end{theorem}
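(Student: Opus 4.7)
The elementary observation $V(r)\approx V_{1}(r)+\cdots +V_{k}(r)\approx V_{\max }(r)$ reduces the theorem to proving the sharp two-sided comparison $p(t,o,o)\approx 1/V_{\max }(\sqrt{t})$.

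For the lower bound, each $V_{i}$ is doubling by \eqref{LYi}, hence so is $V(o,r)\approx V_{\max }(r)$, and $M$ is stochastically complete since each end is. Combining conservation of the semigroup with a Gaussian confinement argument (ensuring $\int_{B(o,C\sqrt{t})}p(t,o,y)\,d\mu (y)\geq 1/2$) and Cauchy--Schwarz gives $p(2t,o,o)=\int p(t,o,y)^{2}\,d\mu (y)\gtrsim 1/V(o,C\sqrt{t})\approx 1/V_{\max }(\sqrt{t})$, which is the lower half of \eqref{ptoo}.

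The upper bound is the core of the theorem. Since $s\mapsto p(s,o,o)$ is non-increasing, a truncated Laplace-transform estimate combined with the results of Section~\ref{SecGeneral} reduces the sought bound to a bound on an integrated resolvent $I_{\lambda }:=\int \chi (y)G_{\lambda }(o,y)\,d\mu (y)$ at $\lambda \sim 1/t$, where $\chi $ is a fixed smooth bump supported in $K$; the point of averaging against $\chi $ is that $I_{\lambda }$ remains finite even when $G_{\lambda }(o,o)$ itself diverges, as on a critical end modelled on $\mathbb{R}^{2}$. One then evaluates $I_{\lambda }$ by a gluing/matching procedure: the solution $u$ of $(\lambda -\Delta )u=\chi $ restricted to each end $E_{i}\cong M_{i}\setminus K_{i}$ is $(\lambda -\Delta )$-harmonic, decays at infinity, and by \eqref{LYi} together with parabolicity of $M_{i}$ is pinned down by its Cauchy data on $\partial K$. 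Under condition (d), each end's contribution to $I_{\lambda }$ is equivalent to $(1/\lambda )/V_{i}(1/\sqrt{\lambda })$ up to at most a logarithm; matching traces and fluxes on the common boundary $\partial K$ assembles these pieces so that $I_{\lambda }\lesssim (1/\lambda )/V_{\max }(1/\sqrt{\lambda })$, with the dominant contribution coming from the end of largest volume. Setting $\lambda =1/t$ produces the claimed $p(t,o,o)\lesssim 1/V_{\max }(\sqrt{t})$.

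The principal anticipated obstacle is the critical end case $V_{i}(r)\approx r^{2}$, where the end resolvent carries $\log $ corrections already visible in the off-diagonal displays of the introduction (e.g.\ the $1/(t\log t)$ factors on $\mathbb{R}^{2}\#\mathbb{R}^{2}$). A naive matching on $\partial K$ would propagate those logarithms into the on-diagonal bound at $o$ and destroy the target $1/V_{\max }(\sqrt{t})$. Preventing this requires exploiting the precise form of condition (d) --- the subcritical improvement \eqref{or2logr} and the exact asymptotic $V_{i}(r)\approx r^{2}$ in the critical case --- to engineer cancellations at the diagonal point $o$, even though the same logarithms survive off-diagonally in Theorems~\ref{T1}--\ref{T3}.
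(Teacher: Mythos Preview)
Your overall architecture matches the paper's: reduce to the integrated resolvent $\gamma_\lambda$ via the Tauberian mechanism of Lemma~\ref{Lemma of G}, then control $\gamma_\lambda$ on $\partial K$ by a comparison/matching argument against the end data. In the purely subcritical case this is exactly right and leads to $\sup_{\partial K}\gamma_\lambda\lesssim 1/(\lambda V_{\max}(1/\sqrt{\lambda}))$, hence $p(t,o,o)\lesssim 1/V_{\max}(\sqrt{t})$ by Lemma~\ref{Lemma of G}$(i)$.

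The gap is in the critical case. You identify the obstacle correctly---logarithms appear---but your proposed resolution, to ``engineer cancellations at the diagonal point $o$'' via condition~(d), cannot work. When at least one end is critical, the bound $\gamma_\lambda(o)\approx\log(1/\lambda)$ is \emph{sharp} (the paper says so explicitly in the remark following Lemma~\ref{Lemma of G}); there is nothing to cancel, and your asserted conclusion $I_\lambda\lesssim (1/\lambda)/V_{\max}(1/\sqrt{\lambda})\approx 1$ is simply false. Feeding the true resolvent growth into Lemma~\ref{Lemma of G}$(i)$ yields only $p(t,o,o)\lesssim (\log t)/t$, one logarithm off target. The paper's actual device is to pass from $\gamma_\lambda$ to its $\lambda$-derivative $\dot{\gamma}_\lambda=-\partial_\lambda\gamma_\lambda=G_\lambda\gamma_\lambda$ and prove $\sup_{\partial K}\dot{\gamma}_\lambda\leq C/\lambda$; Lemma~\ref{Lemma of G}$(ii)$ then gives $p(t,o,o)\leq C/t$. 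Establishing this derivative bound requires a second, more elaborate comparison inequality \eqref{estimate of G sum (ii)} involving the auxiliary function $\Psi_\lambda^\Omega=G_\lambda^\Omega(1-\Phi_\lambda^\Omega)$ and its estimate \eqref{estimate of R_K^1}; condition~(d) enters precisely here, in the proof of Lemma~\ref{Lemma derivative RHS}. The point is that the optimal resolvent growth $\log(1/\lambda)$ is perfectly compatible with the optimal derivative growth $1/\lambda$, and only the latter recovers the sharp heat kernel decay.

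A smaller issue: your lower bound via Cauchy--Schwarz plus confinement is the right idea, but the confinement $\int_{B(o,C\sqrt{t})}p(t,o,\cdot)\,d\mu\geq 1/2$ is not available a priori on $M$, since $M$ does not satisfy \eqref{LY type} globally. The paper sidesteps this by first proving the on-diagonal upper bound and then invoking \cite[Theorem~7.2]{Coulhon-Grigoryan}, which deduces the matching lower bound from the upper bound together with doubling of $V(o,r)$.
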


Let us mention for comparison the following result of \cite{G-SC ends}: if
all manifolds $M_{i}$ are non-parabolic and satisfy $($\ref{LY type}$)$ and $%
\left( RCA\right) $, then the heat kernel on $M=$ $M_{1}\#\cdots \#M_{k}$
satisfies 
\begin{equation}
p(t,o,o)\approx \frac{1}{V_{\min }(\sqrt{t})},  \label{non-parabolic}
\end{equation}%
where 
\begin{equation*}
V_{\min }(r):=\min_{1\leq i\leq k}V_{i}(r).
\end{equation*}%
The proof of the upper bound in (\ref{non-parabolic}), that is, of the
inequality%
\begin{equation}
p\left( t,o,o\right) \leq \frac{C}{V_{\min }\left( \sqrt{t}\right) },
\label{ptmin}
\end{equation}%
goes as follows. By \cite[Prop. 5.2]{G Revista}, the upper bound in $($\ref%
{LY type}$)$ on $M_{i}$ is equivalent to a certain \textit{Faber-Krahn type}
inequality on $M_{i}$. Using a technique for merging of such inequalities,
developed in \cite[Thm. 3.5]{G-SC FK}, one obtains a similar Faber-Krahn
inequality on $M$, which then implies the heat kernel upper bound (\ref%
{ptmin}) by \cite[Thm. 5.2]{G Revista} (see \cite[Thm. 4.5]{G-SC FK} and 
\cite[Cor. 4.7]{G-SC ends} for the details). The reason for appearing of $%
V_{\min }$ in (\ref{ptmin}) is that the Faber-Krahn inequality on $M$ cannot
be stronger than that of each end $M_{i}$ and, hence, is determined by the
end with the smallest function $V_{i}\left( r\right) $.

The proof of the lower bound in (\ref{non-parabolic}), that is, of the
inequality 
\begin{equation}
p\left( t,o,o\right) \geq \frac{c}{V_{\min }\left( \sqrt{t}\right) }
\label{ptlow}
\end{equation}%
uses the comparison 
\begin{equation*}
p(t,x,y)\geq p_{E_{i}}(t,x,y)
\end{equation*}%
on each end $E_{i}$, where $p_{E_{i}}(t,x,y)$ is the Dirichlet heat kernel
on $E_{i}$ vanishing on $\partial E_{i}$. By \cite[Thm 3.1]{G-SC Dirichlet},
non-parabolicity of $M_{i}$ and $($\ref{LY type}$)$ imply that, away from $%
\partial E_{i}$,%
\begin{equation}
p_{E_{i}}(t,x,y)\geq cp_{i}\left( Ct,x,y\right) .  \label{pE}
\end{equation}%
It follows that, for any $i=1,...,k$, 
\begin{equation*}
p\left( t,o,o\right) \geq \frac{c}{V_{i}\left( \sqrt{t}\right) },
\end{equation*}%
which is equivalent to (\ref{ptlow}).

In the present setting, when all the manifolds $M_{i}$ are parabolic, both
arguments described above work but give non-optimal results. For example,
one obtains as above the upper bound (\ref{ptmin}), which in general is
weaker than the upper in (\ref{ptoo}). As far as the lower bound is
concerned, the estimate (\ref{pE}) fails in the parabolic case and has to be
replaced by a weaker one (cf. \cite[Thm 4.9]{G-SC Dirichlet}), which does
not yield an optimal lower bound for $p\left( t,o,o\right) .$ This explains
why we have to develop entirely new method for obtaining optimal bounds for $%
p\left( t,o,o\right) $ in the case when all manifolds $M_{i}$ are parabolic.
The most significant part of the estimate (\ref{ptoo}) is the upper bound%
\begin{equation}
p\left( t,o,o\right) \leq \frac{C}{V_{\max }\left( \sqrt{t}\right) }.
\label{pt<}
\end{equation}%
The proof of (\ref{pt<}) is the main achievement of the present paper. We
use for that a new method involving the \textit{integrated resolvent}%
\begin{equation*}
\gamma _{\lambda }\left( x\right) =\int_{K}\int_{0}^{\infty }e^{-t\lambda
}p\left( t,x,y\right) dtd\mu \left( y\right)
\end{equation*}%
defined for $\lambda >0.$ The parabolicity of $M$ implies that $\gamma
_{\lambda }\left( x\right) \rightarrow \infty $ as $\lambda \rightarrow 0,$
and the rate of increase of $\gamma _{\lambda }\left( x\right) $ as $\lambda
\rightarrow 0$ is related to the rate of decay of $p\left( t,o,o\right) $ as 
$t\rightarrow \infty .$ In fact, the integrated resolvent $\gamma _{\lambda
} $ on the connected sum $M$ satisfies a certain integral equation involving
as coefficients the Laplace transforms of the exit probabilities at each
end. This allows to estimate the rate of growth of $\gamma _{\lambda }$ as $%
\lambda \rightarrow 0$ and then to recover the upper bound (\ref{pt<}) in
the subcritical case. In the critical case one has to use instead $\partial
_{\lambda }\gamma _{\lambda }$.

Since $V_{\max }\left( r\right) \approx V\left( o,r\right) $ and $V\left(
o,r\right) $ satisfies the volume doubling property, the upper bound (\ref%
{pt<}) implies automatically a matching lower bound of $p\left( t,o,o\right) 
$ by \cite[Thm. 7.2]{Coulhon-Grigoryan} (see Section \ref{lower bound} for
the details).

\begin{remark}
\RM Kasahara and Kotani recently obtained in \cite[Example 6.1]{Kasahara}
the same on-diagonal heat kernel estimates for a connected sum of two Bessel
processes on the half line $[0,\infty )$ by using the Stieltjes transforms.
\end{remark}

\subsection{Off-diagonal estimates}

In order to state the estimates for $p\left( t,x,y\right) $ for arbitrary $%
x,y\in M$, we need some notation. For any $x\in M$ set%
\begin{equation*}
\left\vert x\right\vert :=d\left( x,K\right) +e.
\end{equation*}%
For all $x\in M$ and for all $t>2$, define the following functions: 
\begin{equation}
D(x,t):=\left\{ 
\begin{array}{ll}
1, & \text{if }\left\vert x\right\vert >\sqrt{t}\ \text{and }x\in E_{i}, \\ 
\frac{\left\vert x\right\vert ^{2}V_{i}(\sqrt{t})}{tV_{i}(\left\vert
x\right\vert )}, & \text{if }\left\vert x\right\vert \leq \sqrt{t}\text{ and 
}x\in E_{i}, \\ 
0, & \text{if }x\in K,%
\end{array}%
\right.  \label{function D}
\end{equation}%
\begin{equation}
U\left( x,t\right) :=\left\{ 
\begin{array}{ll}
\frac{1}{\log \left\vert x\right\vert }, & \text{if\ }\left\vert
x\right\vert >\sqrt{t} \\ 
\frac{1}{\log \sqrt{t}}\log \frac{e\sqrt{t}}{|x|}, & \text{if }\left\vert
x\right\vert \leq \sqrt{t},%
\end{array}%
\right.  \label{function U}
\end{equation}%
\begin{equation}
W(x,t):=\left\{ 
\begin{array}{ll}
1, & \text{if }\left\vert x\right\vert >\sqrt{t} \\ 
\frac{\log \left\vert x\right\vert }{\log \sqrt{t}}, & \text{if }\left\vert
x\right\vert \leq \sqrt{t}.%
\end{array}%
\right.  \label{function W}
\end{equation}

It is clear that $U\left( x,t\right) \leq 1$, $U\left( x,t\right) \nearrow 1$
as $t\rightarrow $ $\infty ,$ and $W\left( x,t\right) \leq 1$ and $W\left(
x,t\right) \searrow 0\ $as $t\rightarrow \infty .$ It is also useful to
observe that%
\begin{equation}
1\leq U\left( x,t\right) +W\left( x,t\right) \leq 2.  \label{U+W}
\end{equation}%
If $V_{i}\left( r\right) $ is either critical or subcritical, then it is
possible to show that $D\left( x,t\right) $ is bounded.

The next three theorems constitute our second main result. It is obtained by
combining Theorem \ref{main theorem} with several results from \cite{G-SC
Dirichlet}, \cite{G-SC hitting} and \cite{G-SC ends}.

In the first theorem we consider the case when $x$ and $y$ lie at different
ends.

\begin{theorem}
\label{T1}In the setting of Theorem {\ref{main theorem}}, the following
estimates are true for all $x\in E_{i}$, $y\in E_{j}$ with $i\neq j$ and $%
t>t_{0}$, where $t_{0}$ is large enough.

\begin{enumerate}
\item[$\left( i\right) $] If all the manifolds $M_{l}$, $l=1,...,k$, are
subcritical then%
\begin{equation}
p(t,x,y)\asymp \frac{C}{V_{\max }(\sqrt{t})}e^{-b\frac{d^{2}\left(
x,y\right) }{t}}.  \label{T1i}
\end{equation}

\item[$\left( ii\right) $] Suppose that at least one of the manifolds $M_{l}$%
, $l=1,...,k$, is critical.

$\left( ii\right) _{1}$ If both of $M_{i}$ and $M_{j}$ are subcritical, then 
\begin{equation}
p(t,x,y)\asymp \frac{C}{t}\left( 1+\left( D(x,t)+D(y,t)\right) \log t\right)
e^{-b\frac{d^{2}\left( x,y\right) }{t}}.  \label{T1ii1}
\end{equation}

$\left( ii\right) _{2}$ If both of $M_{i}$ and $M_{j}$ are critical, then 
\begin{equation}
p(t,x,y)\asymp \frac{C}{t}\left(
U(x,t)U(y,t)+W(x,t)U(y,t)+U(x,t)W(y,t)\right) e^{-b\frac{d^{2}\left(
x,y\right) }{t}}.  \label{T1ii2}
\end{equation}%
$\left( ii\right) _{3}$ If $M_{i}$ is subcritical and $M_{j}$ is critical,
then 
\begin{equation}
p(t,x,y)\asymp \frac{C}{t}\left( 1+D(x,t)U(y,t)\log t\right) e^{-b\frac{%
d^{2}\left( x,y\right) }{t}}.  \label{T1ii3}
\end{equation}
\end{enumerate}
\end{theorem}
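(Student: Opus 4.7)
The plan is to derive Theorem~\ref{T1} by combining the on-diagonal estimate \eqref{ptoo} from Theorem~\ref{main theorem} with the gluing machinery and hitting-time bounds developed in \cite{G-SC ends}, \cite{G-SC Dirichlet} and \cite{G-SC hitting}. Since $x\in E_i$ and $y\in E_j$ with $i\ne j$, any Brownian path from $x$ to $y$ must enter and leave the central compact set $K$; applying the strong Markov property at the first and last visits to $K$, and using Harnack chaining inside $K$ to collapse the two boundary integrations into a single evaluation at $o$, one obtains a schematic representation
\begin{equation*}
p(t,x,y)\;\asymp\; \int_0^{t}\!\!\int_0^{t-s} \psi_i(s,x)\, p(t-s-r,o,o)\, \psi_j(r,y)\, dr\, ds,
\end{equation*}
modulo a Gaussian factor $\exp(-b d^2(x,y)/t)$ arising from the large-deviation tail of the hitting time via \eqref{LYi}. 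Here $\psi_i(\cdot,x)$ is comparable to the hitting-time density of $\partial K$ from $x$ in $E_i$.

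Into this formula I would substitute two ingredients. First, Theorem~\ref{main theorem} gives $p(\tau,o,o)\asymp 1/V_{\max}(\sqrt{\tau})$, which equals $1/\tau$ precisely when at least one $M_l$ is critical and grows strictly slower than $1/\tau$ otherwise. Second, the estimates of \cite{G-SC Dirichlet,G-SC hitting} provide sharp two-sided bounds for $\psi_i$: for $|x|\le\sqrt{s}$, the cumulative hitting probability $\int_0^s\psi_i(r,x)\,dr$ is comparable to $D(x,s)$ when $M_i$ is subcritical and to $W(x,s)$ when $M_i$ is critical; for $|x|>\sqrt{s}$ the density carries a Gaussian factor in $|x|^2/s$ with prefactor of order $1$ (subcritical) or $1/\log|x|\asymp U(x,s)$ (critical). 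The functions $D$, $U$, $W$ defined in \eqref{function D}--\eqref{function W} are tailored precisely to package these hitting asymptotics.

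Given these inputs, the proof reduces to asymptotically evaluating the above convolution in each of the four regimes. In case $(i)$ all ends are subcritical, the on-diagonal factor $1/V_{\max}(\sqrt{\tau})$ decays fast enough that the dominant contribution yields the clean bound \eqref{T1i}. In cases $(ii)_1$--$(ii)_3$ some end is critical, so $V_{\max}(\sqrt{t})\asymp t$ and integrating $1/\tau$ against the hitting factors generates the extra $\log t$ multiplying $D(x,t)+D(y,t)$ in \eqref{T1ii1}, and, via \eqref{U+W}, combines the critical hitting contributions into the three-term sum $UU+WU+UW$ of \eqref{T1ii2} and the mixed expression \eqref{T1ii3}. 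The Gaussian factor rides along throughout from the Li--Yau estimate on each end.

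The main obstacle is matching upper and lower bounds in case $(ii)_2$, where both $M_i$ and $M_j$ are critical. There the hitting densities decay only logarithmically, and several space-time regions contribute comparably to the convolution. To realize each of the three summands $U(x,t)U(y,t)$, $W(x,t)U(y,t)$, $U(x,t)W(y,t)$ as a genuine lower bound, one must construct a dominant Brownian trajectory for each regime -- typically some combination of a quick hit of $K$ on one side with a slow diffusive exploration on the other -- and apply parabolic Harnack chaining inside $K$. The delicate interplay of $|x|$, $|y|$ and $\sqrt{t}$ is already packaged into the definitions of $U$ and $W$, but verifying uniform matching of the bounds across all subregimes of parameter space is the most laborious part of the argument.
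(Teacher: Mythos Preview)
Your overall strategy---reduce to hitting-time estimates on each end combined with the on-diagonal bound at the centre---is exactly the right one, and it is what the paper does. But the paper does not set up and evaluate the double convolution you write down. Instead it invokes a ready-made two-sided estimate, \cite[Theorem 3.5]{G-SC ends}, which for disjoint ends $A=E_i$, $B=E_j$ reads
\[
p(t,x,y)\;\approx\; P^{\pm}(t)\,\psi_{E_i}(x,\tilde t)\,\psi_{E_j}(y,\tilde t)
\;+\;G^{\pm}(\tilde t)\bigl[\partial_t\psi_{E_i}(x,\xi)\,\psi_{E_j}(y,\tilde t)+\partial_t\psi_{E_j}(y,\zeta)\,\psi_{E_i}(x,\tilde t)\bigr],
\]
where $P^{\pm}$ and $G^{\pm}$ are the sup/inf and the time-integral of $p(s,z_1,z_2)$ over $z_1\in\partial E_i$, $z_2\in\partial E_j$. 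This formula already delivers matching upper and lower bounds simultaneously; one then just substitutes $P^{\pm}(t)\approx 1/V_{\max}(\sqrt t)$ from Theorem~\ref{main theorem}, computes $G^{\pm}(t)\approx t/V_{\max}(\sqrt t)$ (all subcritical) or $\approx\log t$ (some critical end), and plugs in the hitting estimates from \cite{G-SC hitting}. The ``main obstacle'' you flag in case $(ii)_2$ simply does not arise: no separate construction of dominant trajectories is needed for the lower bound, because \cite[Theorem 3.5]{G-SC ends} has already done that work in the abstract.

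There is also a concrete error in your identification of the hitting quantities. For a subcritical end the \emph{cumulative} hitting probability $\psi_{E_i}(x,t)=\mathbb P_x(\tau_{E_i}\le t)$ is $\asymp e^{-b|x|^2/t}$, i.e.\ essentially $1$ when $|x|\le\sqrt t$, not $D(x,t)$; it is the \emph{time derivative} $\partial_t\psi_{E_i}(x,t)$ that behaves like $t^{-1}D(x,t)e^{-b|x|^2/t}$. Likewise for a critical end, $\psi_{E_i}(x,t)\asymp U(x,t)e^{-b|x|^2/t}$ while $\partial_t\psi_{E_i}(x,t)\asymp (t\log t)^{-1}W(x,t)e^{-b|x|^2/t}$. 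You have the pairs $(1,D)$ and $(U,W)$ swapped. With the correct assignments the three-term structure $P^{\pm}\psi\psi + G^{\pm}(\partial_t\psi\cdot\psi+\psi\cdot\partial_t\psi)$ immediately yields, after using \eqref{U+W} in case $(ii)_3$, each of \eqref{T1i}--\eqref{T1ii3}.
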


The next two theorems cover the case when $x,y$ lie at the same end.

\begin{theorem}
\label{T2}In the setting of Theorem \ref{main theorem}, assume that $x,y\in
E_{i}$ and $t>t_{0}$.

\begin{enumerate}
\item[$\left( a\right) $] If $\sqrt{t}\leq \min \left( \left\vert
x\right\vert ,\left\vert y\right\vert \right) $ then%
\begin{equation}
p(t,x,y)\asymp \frac{C}{V_{i}(x,\sqrt{t})}e^{-b\frac{d^{2}\left( x,y\right) 
}{t}}.  \label{Vi}
\end{equation}

\item[$\left( b\right) $] Moreover, if $V_{i}\left( r\right) \approx V_{\max
}\left( r\right) $ for all large $r$, then (\ref{Vi}) holds for all $t>t_{0}$%
. In particular, this is the case when $M_{i}$ is critical.
\end{enumerate}
\end{theorem}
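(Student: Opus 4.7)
The plan is to combine Theorem~\ref{main theorem} with the gluing machinery of \cite{G-SC ends} and the parabolic Dirichlet heat-kernel comparison from \cite{G-SC Dirichlet}. For $x,y\in E_i$ I write
\[
p(t,x,y) = p_{E_i}^D(t,x,y) + R(t,x,y),
\]
where $p_{E_i}^D$ is the heat kernel of $E_i$ with Dirichlet conditions on $\partial E_i$, and $R(t,x,y)$ is the contribution of trajectories that hit $\partial K$ before time $t$, obtained by the strong Markov property applied at the first hitting time of $\partial K$ and expressible as an integral over $[0,t]\times \partial K$ of the hitting density against $p(t-s,\cdot,y)$. The lower bound of part~(a) is then immediate: under $\sqrt{t}\leq \min(|x|,|y|)$ both endpoints lie at distance $\gtrsim \sqrt{t}$ from $\partial E_i$, so the parabolic Dirichlet/global comparison on ends satisfying $(LY)$ and $(RCA)$ (cf.\ \cite[Thm.~4.9]{G-SC Dirichlet}) gives $p_{E_i}^D(t,x,y) \geq c\,p_i(Ct,x,y)$, and (\ref{LYi}) concludes.

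For the upper bound of part~(a), the Dirichlet contribution already fits the target via $p_{E_i}^D(t,x,y) \leq p_i(t,x,y)$ and (\ref{LYi}). For the correction $R$, I use (\ref{LYi}) on the factor from $x$ to $z\in\partial K$ to produce a Gaussian factor with $d_i(x,z)\geq |x|$, and for $p(t-s,z,y)$ I combine Theorem~\ref{main theorem} with a Davies-type off-diagonal upgrade (on-diagonal bound $\Rightarrow$ Gaussian off-diagonal via the integrated maximum principle) to extract a Gaussian factor with $d(z,y)\geq |y|$. Optimizing the $s$-integral at its saddle point $s^{*}\asymp t|x|/(|x|+|y|)$ yields
\[
R(t,x,y) \leq \frac{C}{V_{\max}(\sqrt{t})}\,\exp\!\left(-b\,\frac{(|x|+|y|)^{2}}{t}\right).
\]
Under the hypothesis of (a), volume doubling gives $V_i(x,\sqrt{t}) \leq V_{\max}(\sqrt{t})(|x|/\sqrt{t})^{D}$; the polynomial prefactor is absorbed into the Gaussian (shrinking $b$ slightly), and $|x|+|y|\geq d(x,y) - \operatorname{diam} K$ then produces $R\leq (C/V_i(x,\sqrt{t}))e^{-b'd^{2}(x,y)/t}$. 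Part~(b) follows by the same decomposition without the distance hypothesis: when $V_i\approx V_{\max}$, Theorem~\ref{main theorem} sharpens to $p(t,o,o)\approx 1/V_i(\sqrt{t})$, and whenever $\min(|x|,|y|)<\sqrt{t}$ volume doubling gives $V_i(x,\sqrt{t})\approx V_i(\sqrt{t})$, so both the Dirichlet contribution and $R$ are of the target order.

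The main obstacle will be the sharp control of $R$: extracting $\exp(-b(|x|+|y|)^{2}/t)$ rather than the weaker $\exp(-b(|x|^{2}+|y|^{2})/t)$ requires a careful saddle-point analysis of the $s$-integral and makes essential use of the hitting-time estimates of \cite{G-SC hitting}. A secondary technical point is that the off-diagonal Gaussian upgrade of $p(t-s,z,y)$ must be derivable from only the on-diagonal bound of Theorem~\ref{main theorem} together with $(RCA)$ and the volume-doubling property of $M$, since the full Li-Yau estimate fails on the connected sum.
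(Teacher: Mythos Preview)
Your decomposition $p=p^{D}_{E_i}+R$ and the use of \cite[Thm.~4.9]{G-SC Dirichlet} for part~(a) are in the right spirit, but the paper does \emph{not} estimate $R$ via a saddle-point/Davies argument. It invokes instead the two-sided gluing formula \cite[Thm.~3.5]{G-SC ends} (equation~(\ref{general full estimate}) here), applied with $A=E_i$ and $B=E_i'=E_i\setminus K'$ for a slightly enlarged $K'\supset K$. That formula expresses $p(t,x,y)$, up to multiplicative constants and for \emph{both} upper and lower bounds, as $p_{E_i}^{D}(t,x,y)$ plus explicit terms built from $\psi_{E_i}$, $\partial_t\psi_{E_i}$, $P^{\pm}(t)\approx 1/V_{\max}(\sqrt t)$ and $G^{\pm}(t)\approx\int_1^t ds/V_{\max}(\sqrt s)$. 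All Gaussian decay in $|x|$ and in $|y|$ then comes from the hitting estimates of \cite{G-SC hitting}; the only values of $p$ on $M$ that enter are $p(s,z_1,z_2)$ with $z_1\in\partial E_i$, $z_2\in\partial E_i'$, where a local Harnack inequality plus Theorem~\ref{main theorem} suffices. The proof then splits into the cases (all ends subcritical; $M_i$ subcritical with a critical end present; $M_i$ critical) because $G^{\pm}$, $\psi$, $\partial_t\psi$ take different explicit forms in each.

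Your route has two genuine gaps. First, the ``Davies-type off-diagonal upgrade'' of $p(t-s,z,y)$ cannot be read off Theorem~\ref{main theorem}: the integrated-maximum-principle machinery needs a uniform on-diagonal bound $p(t,\cdot,\cdot)\le\phi(t)$, not merely the value at $o$. The paper sidesteps any off-diagonal bound on $M$ by symmetrizing (hitting from $y$ as well), and the use of two \emph{separated} boundaries $\partial E_i$, $\partial E_i'$ is what keeps $G^{\pm}(t)$ finite by removing the small-time on-diagonal singularity; your single-boundary formulation of $R$ does not. Second, your argument for part~(b) does not deliver the lower bound in the critical case: when $M_i$ is critical and $|x|,|y|\ll\sqrt t$, the Dirichlet term carries the factor $W(x,t)W(y,t)=\log|x|\log|y|/\log^{2}\!\sqrt t\ll 1$ (see (\ref{Dirichlet critical})) and is far below the target, so the lower bound must come from the correction. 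In the paper this works because (\ref{general full estimate}) also produces the cross terms $UU+UW+WU$; one then uses $WW+UU+UW+WU=(U+W)^{2}$ together with (\ref{U+W}) to recover the Li--Yau shape. Your sketch asserts that ``$R$ is of the target order'' without supplying this mechanism. (Incidentally, your worry about $(|x|+|y|)^2$ versus $|x|^2+|y|^2$ is moot: the two are comparable, so either exponent suffices.)
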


Estimate (\ref{Vi}) means that, for a restricted time, Brownian motion on
each end does not see the other ends, which is natural to expect. Note that
the same phenomenon holds also in the case when all $M_{i}$ are
non-parabolic.

The second claim of Theorem \ref{T2} means that, on the maximal end,
Brownian motion does not see the other ends for all times. It is interesting
to observe that in the case when all $M_{i}$ are non-parabolic, a similar
statement holds for the minimal end.

\begin{theorem}
\label{T3}In the setting of Theorem \ref{main theorem}, assume that $M_{i}$
is subcritical, $x,y\in E_{i}$ and $t>t_{0}$. If $\sqrt{t}\geq \min \left(
\left\vert x\right\vert ,\left\vert y\right\vert \right) $ then the
following is true.

\begin{enumerate}
\item[$\left( i\right) $] If all the manifolds $M_{l}$, $l=1,...,k$, are
subcritical, then%
\begin{equation}
p(t,x,y)\asymp C\left( \frac{D(x,t)D(y,t)}{V_{i}(\sqrt{t})}+\frac{1}{V_{\max
}(\sqrt{t})}\right) e^{-b\frac{d^{2}\left( x,y\right) }{t}}.  \label{T3i}
\end{equation}

\item[$\left( ii\right) $] If at least one of the manifolds $M_{l}$, $%
l=1,...,k$, is critical then 
\begin{equation}
p(t,x,y)\asymp C\left( \frac{D(x,t)D(y,t)}{V_{i}(\sqrt{t})}+\frac{1}{t}%
\left( 1+\left( D(x,t)+D(y,t)\right) \log t\right) \right) e^{-b\frac{%
d^{2}\left( x,y\right) }{t}}.  \label{T3ii}
\end{equation}
\end{enumerate}
\end{theorem}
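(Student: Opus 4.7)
The plan is to decompose $p(t,x,y)$ for $x,y\in E_{i}$ according to whether the Brownian path from $x$ to $y$ stays in $E_i$ or visits the central region $K$. By the strong Markov property applied at the first hitting time of $\partial E_i$,
\begin{equation*}
p(t,x,y)=p_{E_{i}}(t,x,y)+\int_{0}^{t}\int_{\partial E_{i}}\psi_{x}^{E_{i}}(s,z)\,p(t-s,z,y)\,d\sigma(z)\,ds,
\end{equation*}
where $p_{E_i}$ is the Dirichlet heat kernel of $E_i$ and $\psi_x^{E_i}$ is the first-hitting density of $\partial E_i$ by Brownian motion in $E_i$ started at $x$. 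The two summands are matched, respectively, to the two terms on the right-hand sides of \eqref{T3i} and \eqref{T3ii}.

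For the first summand, the hypotheses (parabolicity and subcriticality of $M_i$, together with (\ref{LY type}) and $(RCA)$) allow us to invoke the techniques of \cite{G-SC Dirichlet} and \cite{G-SC hitting} to obtain the Dirichlet estimate
\begin{equation*}
p_{E_{i}}(t,x,y)\asymp \frac{D(x,t)D(y,t)}{V_{i}(\sqrt{t})}\,e^{-b d^{2}(x,y)/t}\qquad\text{when }\sqrt{t}\geq \min(|x|,|y|),
\end{equation*}
where the factors $D(x,t),D(y,t)$ encode the capacity-weighted cost of avoiding $\partial E_i$. This yields the first term in both \eqref{T3i} and \eqref{T3ii}, independently of whether any other end is critical.

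For the second summand, the parabolic Harnack inequality available on a neighborhood of $K$ (via the gluing of Li--Yau kernels under $(RCA)$, as in \cite{G-SC ends}) allows one to replace $p(t-s,z,y)$ by $p(t-s,o,y)$ up to multiplicative constants, so the integrand factors essentially as $\psi_{x}^{E_{i}}(s,\partial E_i)\,p(t-s,o,y)$. The hitting mass satisfies $\int_{0}^{s}\psi_{x}^{E_i}(u,\partial E_i)\,du\asymp D(x,s)$ for $|x|\leq \sqrt{s}$ (cf.\ \cite{G-SC hitting}), and a symmetric decomposition near $y$ produces a $D(y,t)$ factor. The factor $p(t-s,o,y)$ is bounded using Theorems \ref{T1} and \ref{T2}, or equivalently the on-diagonal estimate $p(t,o,o)\asymp 1/V_{\max}(\sqrt{t})$ of Theorem \ref{main theorem} combined with the off-diagonal Gaussian factor. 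In case $(i)$ this integrates to $\asymp 1/V_{\max}(\sqrt{t})$; in case $(ii)$, the presence of a critical end enhances $p(t,o,y)$ by a $\log t$ factor, producing the additional term $\frac{1}{t}(1+(D(x,t)+D(y,t))\log t)$ appearing in \eqref{T3ii}.

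The main obstacle is the sharp two-sided Dirichlet estimate for $p_{E_i}(t,x,y)$ in the parabolic subcritical regime $\sqrt{t}\geq \min(|x|,|y|)$: the non-parabolic comparison $p_{E_i}\geq c\,p_i$ of \cite[Thm.\ 3.1]{G-SC Dirichlet} is not available here, so both bounds must be extracted from hitting-time analysis, balancing the survival probability in $E_i$ against Li--Yau decay. The upper bound reduces to capacity estimates and the subcritical inequality \eqref{subcritical}; the lower bound requires explicit path constructions using $(RCA)$ that keep paths at positive distance from $\partial E_i$ while connecting $x$ and $y$. Once these Dirichlet estimates are established, the convolution in the second summand is routine, since every other ingredient (hitting density asymptotics and the values of $p(t-s,o,y)$) has already been controlled by Theorem \ref{main theorem} and Theorems \ref{T1}, \ref{T2}.
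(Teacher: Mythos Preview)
Your overall decomposition $p=p_{E_i}+\text{(boundary convolution)}$ via the strong Markov property is the right starting point and is, in spirit, what underlies the argument. The paper, however, does not redo this convolution from scratch: it invokes the ready-made two-sided estimate \cite[Theorem~3.5]{G-SC ends} (formula~(\ref{general full estimate}) here), applied with $A=E_i$ and $B=E_i'=E_i\setminus K'$. That theorem already packages the convolution into the closed form
\[
p\approx p_{E_i}+P^{\pm}(t)\,\psi_{E_i}(x,\tilde t)\psi_{E_i'}(y,\tilde t)+G^{\pm}(\tilde t)\bigl[\partial_t\psi_{E_i}(x,\xi)\,\psi_{E_i'}(y,\tilde t)+\partial_t\psi_{E_i'}(y,\zeta)\,\psi_{E_i}(x,\tilde t)\bigr],
\]
and the proof of Theorem~\ref{T3} reduces to plugging in $P^{\pm}\approx 1/V_{\max}(\sqrt t)$ from Theorem~\ref{main theorem}, $G^{\pm}\approx t/V_{\max}(\sqrt t)$ or $\log t$, the Dirichlet estimate (\ref{Dirichlet subcritical}) from \cite[Thm.~4.9]{G-SC Dirichlet}, and the hitting estimates (\ref{hitting subcritical})--(\ref{hitting derivative subcritical}) from \cite{G-SC hitting}, followed by Lemma~\ref{lemma e} to convert $|x|^2+|y|^2$ to $d^2(x,y)$ and $V_i(x,\sqrt t)$ to $V_i(\sqrt t)$.

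Your proposal contains a concrete error. You claim $\int_0^s\psi_x^{E_i}(u,\partial E_i)\,du\asymp D(x,s)$ for $|x|\le\sqrt s$. The left side is the hitting probability $\mathbb P_x(\tau_{E_i}\le s)$, and for a subcritical end it is $\asymp 1$ in this range (see (\ref{hitting subcritical})), not $D(x,s)$; indeed $D(x,s)\to 0$ as $s\to\infty$ for fixed $x$, while the hitting probability tends to $1$ by parabolicity. The function $D$ enters only through the \emph{time derivative} $\partial_t\psi_{E_i}$ (compare (\ref{hitting derivative subcritical})), not through $\psi_{E_i}$ itself.

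This error is tied to a genuine gap: declaring the convolution ``routine'' skips the mechanism that actually produces the term $\tfrac{1}{t}(D(x,t)+D(y,t))\log t$ in case~(ii). One cannot simply bound $p(t-s,o,y)$ pointwise and integrate against the hitting density; one must integrate by parts in $s$, trading the hitting density for $\partial_t\psi_{E_i}$ and producing the integrated factor $G^{\pm}(t)\approx\int_1^t ds/V_{\max}(\sqrt s)$. The matching lower bound additionally requires restricting to $s\in[t/4,t]$ and controlling the small-$(t-s)$ regime, which is exactly the content of \cite[Thm.~3.5]{G-SC ends}. Finally, the Dirichlet estimate you flag as the ``main obstacle'' is not an obstacle here: \cite[Thm.~4.9]{G-SC Dirichlet} already covers the parabolic case and is quoted directly as (\ref{Dirichlet subcritical}), so no new path constructions are required.
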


\begin{remark}
\RM All the estimates of Theorems \ref{T1}-\ref{T3} can be extended to all $%
x,y\in M$ including also a possibility $x\in K$ or $y\in K$. This follows
from the local Harnack inequality for the heat kernel $p(t,x,y)$ and from a
careful analysis of the estimates. The latter shows that in all cases when $%
\left\vert x\right\vert $ (or $\left\vert y\right\vert $) remains bounded,
the terms containing $D\left( x,t\right) $ are dominated by others and,
hence, can be eliminated, which is equivalent to setting $D\left( x,t\right)
=0$ as in (\ref{function D}). A graphical summary of the estimates of
Theorems \ref{T1}-\ref{T3} can be found at the following location:

\href{https://www.math.uni-bielefeld.de/~grigor/tables.pdf}{%
https://www.math.uni-bielefeld.de/\symbol{126}grigor/tables.pdf}

\end{remark}

\begin{remark}
\RM By \cite[Lemma 5.9]{G-SC ends}, for all $x,y\in M$ and $0<t\leq t_{0}$,
the heat kernel on $M$ satisfies the Li-Yau estimate $($\ref{LY type}$)$
with constants depending on $t_{0}$. For this result it suffices to assume
that each end $M_{i}$ satisfies the Li-Yau estimate. Hence, in Theorems \ref%
{T1}-\ref{T3} we do not worry about the estimates for $t\leq t_{0}$.
\end{remark}

If $V_{i}(r)$ is a power function for each $i=1,\ldots k$, then we can
simplify the heat kernel estimates of Theorems \ref{T1}-\ref{T3} as follows.
In the next statement $x,y$ lie at different ends.

\begin{corollary}
\label{power} Suppose that $V_{i}(r)\approx r^{\alpha _{i}}$ for all $%
i=1,\ldots ,k$ and $r\geq 1$, where $0<\alpha _{i}\leq 2$.

\begin{enumerate}
\item[$\left( i\right) $] Assume that $0<\alpha _{i}<2$ for all $i=1,...,k$
and set 
\begin{equation*}
\alpha =\max_{1\leq i\leq k}\alpha _{i}~.
\end{equation*}%
Then, for all $x,y$ lying at different ends and for all $t>2$, we have%
\begin{equation*}
p\left( t,x,y\right) \asymp \frac{C}{t^{\alpha /2}}e^{-b\frac{d^{2}\left(
x,y\right) }{t}}.
\end{equation*}

\item[$\left( ii\right) $] Assume that $\alpha _{l}=2$ for some $1\leq l\leq
k$. Then the following estimates hold for $i\neq j$, $x\in E_{i}$, $y\in
E_{j}$, $t>2$.
\end{enumerate}

$\left( ii\right) _{1}$ Let $\alpha _{i}<2$ and $\alpha _{j}<2$. If $\min
(\left\vert x\right\vert ,\left\vert y\right\vert )\geq \sqrt{t}$ then%
\begin{equation*}
p\left( t,x,y\right) \asymp \frac{C\log t}{t}e^{-b\frac{d^{2}\left(
x,y\right) }{t}},
\end{equation*}%
and if $\min (\left\vert x\right\vert ,\left\vert y\right\vert )\leq \sqrt{t}
$ then%
\begin{equation*}
p\left( t,x,y\right) \asymp \frac{C}{t}\left( 1+\log t~\left[ \left( \frac{%
\left\vert x\right\vert }{\sqrt{t}}\right) ^{2-\alpha _{i}}+\left( \frac{%
\left\vert y\right\vert }{\sqrt{t}}\right) ^{2-\alpha _{j}}\right] \right) .
\end{equation*}

$\left( ii\right) _{2}$ If $\alpha _{i}=\alpha _{j}=2$ then%
\begin{equation*}
p\left( t,x,y\right) \asymp \frac{C}{t}\left( U\left( x,t\right) U\left(
y,t\right) +U\left( x,t\right) \frac{\log |y|}{\log |y|+\log t}+U\left(
y,t\right) \frac{\log |x|}{\log |x|+\log t}\right) e^{-b\frac{d^{2}\left(
x,y\right) }{t}}.
\end{equation*}%
Consequently, if $\left\vert x\right\vert ,\left\vert y\right\vert \geq 
\sqrt{t}$ then%
\begin{equation}
p\left( t,x,y\right) \asymp \frac{C}{t}\left( \frac{1}{\log \left\vert
x\right\vert }+\frac{1}{\log \left\vert y\right\vert }\right) e^{-b\frac{%
d^{2}\left( x,y\right) }{t}},  \label{power (ii)_2}
\end{equation}%
if $\left\vert x\right\vert ,\left\vert y\right\vert \leq \sqrt{t}$ then%
\begin{equation}
p\left( t,x,y\right) \asymp \frac{C}{t\log ^{2}t}\left( \log t+\log ^{2}%
\sqrt{t}-\log |x|\log |y|\right) ,  \label{power (ii)_3}
\end{equation}%
and if $|x|\geq \sqrt{t}\geq |y|$ then%
\begin{equation}
p(t,x,y)\asymp \frac{C}{t\log t}\log \frac{e\sqrt{t}}{|y|}e^{-b\frac{%
d^{2}(x,y)}{t}}.  \label{x>y}
\end{equation}%
Similarly, if $|y|\geq \sqrt{t}\geq |x|$ then%
\begin{equation}
p(t,x,y)\asymp \frac{C}{t\log t}\log \frac{e\sqrt{t}}{|x|}e^{-b\frac{%
d^{2}(x,y)}{t}}.  \label{y>x}
\end{equation}

$\left( ii\right) _{3}$ If $\alpha _{i}<2$ and $\alpha _{j}=2$ then%
\begin{equation}
p\left( t,x,y\right) \asymp \frac{C}{t}\left( 1+\left( \frac{\left\vert
x\right\vert }{\left\vert x\right\vert +\sqrt{t}}\right) ^{2-\alpha
_{i}}U\left( y,t\right) \log t~\right) e^{-b\frac{d^{2}\left( x,y\right) }{t}%
}.  \label{Corii3}
\end{equation}%
Consequently, if $\left\vert y\right\vert \geq \sqrt{t}$ then%
\begin{equation*}
p\left( t,x,y\right) \asymp \frac{C}{t}e^{-b\frac{d^{2}\left( x,y\right) }{t}%
},
\end{equation*}%
if $\left\vert x\right\vert ,\left\vert y\right\vert \leq \sqrt{t}$ then%
\begin{equation*}
p\left( t,x,y\right) \asymp \frac{C}{t}\left( 1+\left( \frac{\left\vert
x\right\vert }{\sqrt{t}}\right) ^{2-\alpha _{i}}\log \frac{e\sqrt{t}}{%
\left\vert y\right\vert }\right) ,
\end{equation*}%
and if $|x|\geq \sqrt{t}\geq |y|$ then%
\begin{equation*}
p(t,x,y)\asymp \frac{C}{t}\log \frac{e\sqrt{t}}{|y|}e^{-b\frac{d^{2}(x,y)}{t}%
}.
\end{equation*}
\end{corollary}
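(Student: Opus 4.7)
The strategy is to derive Corollary~\ref{power} directly from Theorem~\ref{T1} by substituting $V_i(r)\approx r^{\alpha_i}$ into the auxiliary functions $V_{\max}$, $D$, $U$, $W$ and then simplifying each resulting formula case by case.

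The first step is to record the elementary identities. Clearly $V_{\max}(\sqrt{t})\approx t^{\alpha/2}$, and for $x\in E_i$ with $\alpha_i<2$ one reads off from the definition (\ref{function D}) that
\begin{equation*}
D(x,t)\approx\left(\frac{|x|}{|x|+\sqrt{t}}\right)^{2-\alpha_i},
\end{equation*}
so that $D(x,t)\approx(|x|/\sqrt{t})^{2-\alpha_i}$ when $|x|\le\sqrt{t}$ and $D(x,t)\approx 1$ when $|x|>\sqrt{t}$. The functions $U(x,t)$ and $W(x,t)$ in (\ref{function U})--(\ref{function W}) are already in explicit logarithmic form and need no further simplification. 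These observations make part (i) immediate from Theorem~\ref{T1}(i).

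For part (ii), I would treat the three sub-parts in parallel: each formula of Theorem~\ref{T1}$(ii)_1$, $(ii)_2$, $(ii)_3$ is obtained by plugging in the above expressions and then splitting into the three or four subregimes according to whether each of $|x|,|y|$ exceeds $\sqrt{t}$. In the ``both small'' subregime $|x|,|y|\le\sqrt{t}$, one has $d(x,y)\le|x|+|y|+O(1)\lesssim\sqrt{t}$ so the Gaussian factor $e^{-bd^{2}/t}$ is $\asymp 1$ and is absorbed into the constants. In the critical case $(ii)_2$, after substituting (\ref{function U})--(\ref{function W}) one evaluates $U(x,t)U(y,t)+W(x,t)U(y,t)+U(x,t)W(y,t)$ in each subregime; the explicit consequents (\ref{power (ii)_2})--(\ref{y>x}) then follow by routine manipulation of logarithms (using, e.g., $U(x,t)=\log(e\sqrt{t}/|x|)/\log\sqrt{t}$ in the small-$|x|$ regime). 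For $(ii)_3$, one combines the power-type simplification of $D(x,t)$ with the logarithmic $U(y,t)$ coming from the critical end $M_j$.

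The main technical point, and the only real obstacle, is the polynomial--Gaussian absorption occurring whenever one of $|x|,|y|$ exceeds $\sqrt{t}$. Since $x\in E_i$ and $y\in E_j$ with $i\neq j$ lie at different ends, any path from $x$ to $y$ must pass through the compact set $K$, whence $d(x,y)\gtrsim|x|+|y|$. Consequently, for any exponent $\nu\ge 0$ and any $b>0$ there exists $b'<b$ such that
\begin{equation*}
\left(\frac{|x|}{\sqrt{t}}\right)^{\nu}e^{-b\,d^{2}(x,y)/t}\le C\,e^{-b'\,d^{2}(x,y)/t}\qquad\text{when }|x|\ge\sqrt{t},
\end{equation*}
and symmetrically for $y$. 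This absorption is what reconciles the compact polynomial expression $(|x|/\sqrt{t})^{2-\alpha_i}$ used in the corollary, which exceeds $1$ for $|x|>\sqrt{t}$, with the truncated value $D(x,t)=1$ coming out of Theorem~\ref{T1}; the extra polynomial factor is simply traded for a slightly smaller constant in the Gaussian exponent. Once this principle is applied systematically in each mixed regime, the remaining work is pure bookkeeping.
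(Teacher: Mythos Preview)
Your proposal is correct and follows essentially the same approach as the paper: substitute the power-law volumes into the functions $V_{\max}$, $D$, $U$, $W$ and read off each estimate of Theorem~\ref{T1} in the relevant subregime. The paper's own proof is in fact even terser than yours, declaring that everything ``follows immediately'' from Theorem~\ref{T1} and the definitions; the only step it writes out explicitly is the algebraic identity behind (\ref{power (ii)_3}), namely that for $|x|,|y|\le\sqrt{t}$ the bracket $U(x,t)U(y,t)+W(x,t)U(y,t)+U(x,t)W(y,t)$ factors as $\bigl((1+\log\sqrt{t})^{2}-\log|x|\log|y|\bigr)/\log^{2}\sqrt{t}$, which you subsume under ``routine manipulation of logarithms.'' Your discussion of the polynomial--Gaussian absorption in the mixed regimes is a genuine addition of detail over the paper's proof and is handled correctly.
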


\begin{proof}
All the estimates of Corollary \ref{power} follow immediately from those of
Theorem \ref{T1} and the definitions of functions $D$ and $W$. In the case $%
\left( ii\right) _{2}$, in the range $\left\vert x\right\vert ,\left\vert
y\right\vert \leq \sqrt{t}$, Theorem \ref{T1} gives the estimate 
\begin{equation*}
p\left( t,x,y\right) \asymp \frac{C}{t\log ^{2}\sqrt{t}}\left( \log \frac{e%
\sqrt{t}}{\left\vert x\right\vert }\log \frac{e\sqrt{t}}{\left\vert
y\right\vert }+\log \left\vert y\right\vert \log \frac{e\sqrt{t}}{\left\vert
x\right\vert }+\log \left\vert x\right\vert \log \frac{e\sqrt{t}}{\left\vert
y\right\vert }\right) .
\end{equation*}%
Since the sum in the brackets is equal to  
\begin{equation*}
\left( \log \left\vert x\right\vert +\log \frac{e\sqrt{t}}{\left\vert
x\right\vert }\right) \left( \log \left\vert y\right\vert +\log \frac{e\sqrt{%
t}}{\left\vert y\right\vert }\right) -\log \left\vert x\right\vert \log
\left\vert y\right\vert =\left( 1+\log \sqrt{t}\right) ^{2}-\log \left\vert
x\right\vert \log \left\vert y\right\vert ,
\end{equation*}%
we obtain (\ref{power (ii)_3}).
\end{proof}

Let us state some consequences of Theorems \ref{T1}-\ref{T3} in the general
setting, but under some specific restrictions of the variables $x,y,t$.

\begin{corollary}
\label{Corcases}Under the hypotheses of Theorems \emph{\ref{T1}-\ref{T3}},
we have the following estimates.

\begin{enumerate}
\item[$\left( a\right) $] (Long time regime) For fixed $x,y\in M$ and $%
t\rightarrow \infty $, 
\begin{equation}
p\left( t,x,y\right) \approx \frac{1}{V_{\max }\left( \sqrt{t}\right) }.
\label{ptmax}
\end{equation}

\item[$\left( b\right) $] (Medium time regime) Let $x\in E_{i}$ and $y\in
E_{j}$ with $i\neq j$. If $\left\vert x\right\vert \approx \left\vert
y\right\vert \approx \sqrt{t}$ then in the cases $\left( i\right) $ and $%
\left( ii\right) _{3}$ we have (\ref{ptmax}), in the case $\left( ii\right)
_{1}$ we have%
\begin{equation}
p\left( t,x,y\right) \approx \frac{\log t}{t},  \label{logt/t}
\end{equation}%
and in the case $\left( ii\right) _{2}$%
\begin{equation}
p\left( t,x,y\right) \approx \frac{1}{t\log t}.  \label{tlogt}
\end{equation}
\end{enumerate}
\end{corollary}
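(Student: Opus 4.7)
The plan is to read off both parts directly from Theorems \ref{T1}-\ref{T3} by specializing to the two asymptotic regimes; the substantive analytic work has already been packaged into those theorems. What remains is to (i) evaluate the auxiliary functions $D(\cdot,t)$, $U(\cdot,t)$, $W(\cdot,t)$ in the relevant range of parameters, (ii) note that the exponential factor $e^{-bd^{2}(x,y)/t}$ stays between two positive constants, and (iii) identify the dominant term in each compound expression. A single dichotomy governs the comparison between $C/t$ and $C/V_{\max}(\sqrt{t})$: if some $M_{l}$ is critical then $V_{\max}(\sqrt{t})\approx t$ by $V_{l}(r)\approx r^{2}$, whereas if all $M_{l}$ are subcritical then (\ref{or2logr}) gives $V_{\max}(\sqrt{t})=o(t/\log t)$, so $1/t$ and $1/V_{\max}(\sqrt{t})$ differ by an unbounded factor.

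For part (a), with $x,y$ fixed, $|x|$, $|y|$ and $d(x,y)$ are bounded, and for $t$ large $\sqrt{t}\geq\max(|x|,|y|)$, so the exponential factor tends to $1$. Depending on whether $x$ and $y$ lie in the same end or in different ends, one invokes Theorem \ref{T1}, Theorem \ref{T3}, or Theorem \ref{T2}(b) (with points in $K$ handled by the Remark following Theorem \ref{T3}). Two elementary limits do all the work: for fixed $x$ in a subcritical end $M_{i}$, (\ref{or2logr}) gives
\begin{equation*}
D(x,t)\log t=\frac{|x|^{2}}{V_{i}(|x|)}\cdot\frac{V_{i}(\sqrt{t})\log t}{t}\longrightarrow 0,
\end{equation*}
while for any fixed $x$, $U(x,t)\to 1$ and $W(x,t)\to 0$. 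Inserting these into (\ref{T1i})-(\ref{T1ii3}), (\ref{Vi}), (\ref{T3i})-(\ref{T3ii}), the surviving term is either $C/V_{\max}(\sqrt{t})$ or $C/t$, and the latter only appears in sub-cases where at least one $M_{l}$ is critical, so the two expressions coincide up to constants; this yields (\ref{ptmax}).

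For part (b), with $x\in E_{i}$, $y\in E_{j}$, $i\neq j$, and $|x|\approx|y|\approx\sqrt{t}$, we have $d(x,y)\leq|x|+|y|+\mathrm{diam}(K)\leq C\sqrt{t}$, so the exponential factor is bounded between positive constants. Volume doubling on $M_{i}$ gives $V_{i}(\sqrt{t})\approx V_{i}(|x|)$, hence $D(x,t)\approx 1$; the ratio $\log(e\sqrt{t}/|x|)$ is bounded, so $U(x,t)\approx 1/\log t$; and $W(x,t)\approx 1$. Substitution into the four sub-cases of Theorem \ref{T1} then gives: case $(i)$ produces $1/V_{\max}(\sqrt{t})$ at once; case $(ii)_{1}$ produces $(1/t)(1+2\log t)\approx(\log t)/t$; case $(ii)_{2}$ produces $(1/t)(1/(\log t)^{2}+2/\log t)\approx 1/(t\log t)$; case $(ii)_{3}$ produces $(1/t)(1+(1/\log t)\log t)\approx 1/t\approx 1/V_{\max}(\sqrt{t})$ since $M_{j}$ is critical.

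The proof is essentially bookkeeping; there is no genuine analytic obstacle, because Theorems \ref{T1}-\ref{T3} already supply matching two-sided bounds and the functions $D$, $U$, $W$ are explicit. The only points requiring attention are the selection of the dominant term in each compound expression and the translation between $1/t$ and $1/V_{\max}(\sqrt{t})$ via the presence or absence of a critical end.
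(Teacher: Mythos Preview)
Your proposal is correct and follows essentially the same approach as the paper: both proofs simply evaluate the auxiliary functions $D$, $U$, $W$ in the two regimes (fixed $x,y$ with $t\to\infty$, and $|x|\approx|y|\approx\sqrt t$), plug these into the estimates of Theorems \ref{T1}--\ref{T3}, and identify the dominant term, using (\ref{or2logr}) to kill the $D(x,t)\log t$ contributions and the presence of a critical end to identify $1/t$ with $1/V_{\max}(\sqrt t)$. The paper is slightly more explicit in checking that the residual term $D(x,t)D(y,t)/V_i(\sqrt t)\approx V_i(\sqrt t)/t^2$ from (\ref{T3i}) is indeed dominated by $1/V_{\max}(\sqrt t)$ (via $V_i(r)V_{\max}(r)=o(r^4)$), but this is implicit in your argument.
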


\begin{proof}
$\left( a\right) $ The estimate (\ref{ptmax}) follows easily from Theorem %
\ref{main theorem} by using a local Harnack inequality. However, we show
here how it follows from Theorems \ref{T1}, \ref{T3}. Observe that, for a
fixed $x\in E_{i}$ and large $t$ we have 
\begin{equation}
D\left( x,t\right) \approx \frac{V_{i}\left( \sqrt{t}\right) }{t},\text{\ \ }%
U\left( x,t\right) \approx 1,\ \ W\left( x,t\right) \approx \frac{1}{\log t}.
\label{DUW}
\end{equation}%
Assume that $x\in E_{i}$, $y\in E_{j}$ and consider the cases $\left(
i\right) ,\left( ii\right) _{1},\left( ii\right) _{2}$ and $\left( ii\right)
_{3}$ as in Theorem \ref{T1}.

Case $\left( i\right) $. Using (\ref{T1i}), (\ref{T3i}), (\ref{DUW}) and $%
V_{i}\left( x,\sqrt{t}\right) \approx V_{i}\left( \sqrt{t}\right) \ $as $%
t\rightarrow \infty $ we obtain 
\begin{equation*}
p\left( t,x,y\right) \approx \frac{V_{i}\left( \sqrt{t}\right) }{t^{2}}%
\delta _{ij}+\frac{1}{V_{\max }\left( \sqrt{t}\right) }\approx \frac{1}{%
V_{\max }\left( \sqrt{t}\right) },
\end{equation*}
where we have also used that $V_{j}\left( r\right) V_{\max }\left( r\right)
=o\left( r^{4}\right) .$

Case $\left( ii\right) _{1}$. By (\ref{T1ii1}), (\ref{T3ii}) and (\ref{DUW})
we have%
\begin{eqnarray*}
p\left( t,x,y\right) &\approx &\frac{V_{i}\left( \sqrt{t}\right) }{t^{2}}%
\delta _{ij}+\frac{1}{t}\left\{ 1+\left( \frac{V_{i}(\sqrt{t})}{t}+\frac{%
V_{j}(\sqrt{t})}{t}\right) \log t\right\} \\
&\approx &\frac{1}{t}\approx \frac{1}{V_{\max }\left( \sqrt{t}\right) },
\end{eqnarray*}%
because of $V_{\max }\left( r\right) \approx r^{2}$ and (\ref{or2logr})$.$

Case $\left( ii\right) _{2}.$ If $i\neq j$ then by (\ref{T1ii2}) and (\ref%
{DUW})%
\begin{equation*}
p(t,x,y)\approx \frac{1}{t}\left( 1+\frac{1}{\log t}\right) \approx \frac{1}{%
t}\approx \frac{1}{V_{\max }\left( \sqrt{t}\right) }.
\end{equation*}%
If $i=j$ then (\ref{ptmax}) follows trivially from (\ref{Vi}).

Case $\left( ii\right) _{3}.$ In this case necessarily $i\neq j$, and we
obtain by (\ref{T1ii3}) 
\begin{equation*}
p\left( t,x,y\right) \approx \frac{1}{t}\left\{ 1+\frac{V_{i}(\sqrt{t})}{t}%
\log t\right\} \approx \frac{1}{t}\approx \frac{1}{V_{\max }\left( \sqrt{t}%
\right) }.
\end{equation*}

$\left( b\right) $ In the case $\left\vert x\right\vert \approx \left\vert
y\right\vert \approx \sqrt{t}$ we have $d^{2}\left( x,y\right) \approx t$ and%
\begin{equation*}
D\left( x,t\right) \approx 1,\ \ \ \ U\left( x,t\right) \approx \frac{1}{%
\log t},\ \ \ W\left( x,t\right) \approx 1.
\end{equation*}%
Then the required estimates follow directly from those stated in Theorem \ref%
{T1}.
\end{proof}

Let us observe the following. In the medium time regime, that is, when
\thinspace $x$ and $y$ lie at different ends and $\left\vert x\right\vert
\approx \left\vert y\right\vert \approx \sqrt{t}$, we have by $\left(
b\right) $: in the cases $\left( i\right) $ and $\left( ii\right) _{3}$ 
\begin{equation*}
p\left( t,x,y\right) \approx \frac{1}{V_{\max }\left( \sqrt{t}\right) },
\end{equation*}%
that is, $p\left( t,x,y\right) $ behaves itself as in the long time regime,
whereas in the case $\left( ii\right) _{1}$%
\begin{equation*}
p\left( t,x,y\right) \approx \frac{\log t}{t}\gg \frac{1}{V_{\max }\left( 
\sqrt{t}\right) },
\end{equation*}%
and in the case $\left( ii\right) _{2}$%
\begin{equation*}
p\left( t,x,y\right) \approx \frac{1}{t\log t}\ll \frac{1}{V_{\max }\left( 
\sqrt{t}\right) }.
\end{equation*}

Hence, we observe in the case $\left( ii\right) _{2}$ the \textit{bottleneck
effect}: the heat kernel value $\frac{1}{t\log t}$ in the medium time regime
is significantly \textit{smaller} than that of long time regime $\frac{1}{t}$%
. For example, this case happens for $M=\mathbb{R}^{2}\#\mathbb{R}^{2}$ (see
Fig. \ref{rn+rn}). A similar bottleneck effect was observed in \cite{G-SC
ends} for $M=\mathbb{R}^{n}\#\mathbb{R}^{n}$ with $n\geq 3$: the heat kernel
of $M$ in the long time regime is comparable to $\frac{1}{t^{n/2}}$ whereas
in the medium time regime -- to $\frac{1}{t^{n-1}}$. In the case $n=2$ the
bottleneck effect is quantitatively weaker as the distinction between the
two regimes is determined by $\log t$ in contrast to the power of $t$ in the
case $n\geq 3.$

On the contrary, in the case $\left( ii\right) _{1}$ we observe an
interesting \textit{anti-bottleneck effect}: the heat kernel value $\frac{%
\log t}{t}$ in the medium time regime is significantly \textit{larger} than
that of the long time regime $\frac{1}{t}$. This effect occurs only when
there are at least three ends, one of them being critical and two --
subcritical. For example, this is the case for $M=\mathcal{R}^{1}\#\mathcal{R%
}^{1}\#\mathcal{R}^{2}$ (see Fig. \ref{r2+r1}).

\FRAME{ftbphFU}{3.0225in}{1.6016in}{0pt}{\Qcb{Connected sum $\mathcal{R}%
^{1}\#\mathcal{R}^{1}\#\mathcal{R}^{2}$}}{\Qlb{r2+r1}}{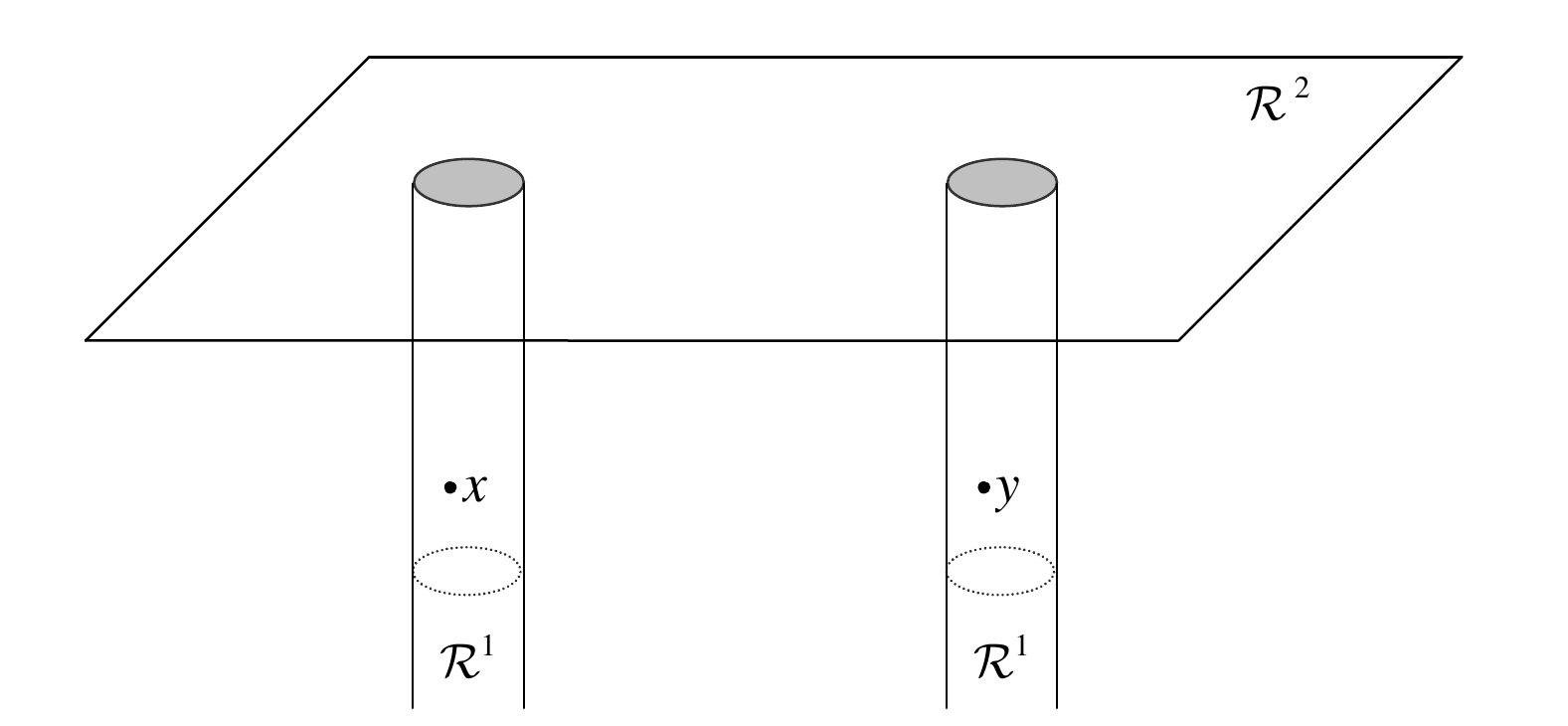}{\special%
{language "Scientific Word";type "GRAPHIC";maintain-aspect-ratio
TRUE;display "USEDEF";valid_file "F";width 3.0225in;height 1.6016in;depth
0pt;original-width 6.333in;original-height 3.3356in;cropleft "0";croptop
"1";cropright "1";cropbottom "0";filename 'r2+r1.eps';file-properties
"XNPEU";}}

\subsection{Examples}

\label{examples}

We present here heat kernel bounds on some specific examples using Theorems %
\ref{T1}-\ref{T3} and Corollary \ref{power}.

\begin{example}[Heat kernel on $\mathcal{R}^{\protect\alpha _{1}}\#\mathcal{R%
}^{\protect\alpha _{2}}$]
\RM Let us write down the heat kernel bounds on the connected sum 
\begin{equation*}
M=M_{1}\#M_{2}=\mathcal{R}^{\alpha _{1}}\#\mathcal{R}^{\alpha _{2}},
\end{equation*}%
where $1\leq \alpha _{1}\leq \alpha _{2}<2$. In this case both $M_{1}$ and $%
M_{2}$ are subcritical so that Theorem \ref{T1}$\left( i\right) $, Theorem %
\ref{T2} and Theorem \ref{T3}$\left( i\right) $ apply. Observe that 
\begin{equation}
D(x,t)=\left\{ 
\begin{array}{ll}
1, & \mbox{if }\left\vert x\right\vert >\sqrt{t}, \\ 
\left( \frac{\left\vert x\right\vert }{\sqrt{t}}\right) ^{2-\alpha _{i}}, & %
\mbox{if }\left\vert x\right\vert \leq \sqrt{t},%
\end{array}%
\right.  \label{Di}
\end{equation}%
and%
\begin{equation*}
V_{\max }\left( r\right) \approx r^{\alpha _{2}},\ \ \ r>1.
\end{equation*}%
In the case $x\in E_{1}$ and $y\in E_{2}$, we obtain by (\ref{T1i}) or by
Corollary \ref{power}$\left( i\right) $, 
\begin{equation*}
p(t,x,y)\asymp \frac{C}{t^{\alpha _{2}/2}}e^{-b\frac{d^{2}\left( x,y\right) 
}{t}}.
\end{equation*}%
\newline

Assume now that $x,y\in E_{1}$. If $\left\vert x\right\vert ,\left\vert
y\right\vert >\sqrt{t}$, then by (\ref{Vi}) we have%
\begin{equation*}
p\left( t,x,y\right) \asymp \frac{C}{V_{1}(x,\sqrt{t})}e^{-b\frac{%
d^{2}\left( x,y\right) }{t}}.
\end{equation*}
If $\left\vert x\right\vert ,\left\vert y\right\vert \leq \sqrt{t}$ then by (%
\ref{T3i}) and (\ref{Di}) we obtain%
\begin{equation}
p(t,x,y)\approx \frac{1}{t^{\alpha _{1}/2}}\left( \frac{\left\vert
x\right\vert \left\vert y\right\vert }{t}\right) ^{2-\alpha _{1}}+\frac{1}{%
t^{\alpha _{2}/2}}.  \label{a1a2}
\end{equation}%
In particular, in the long time regime $t\rightarrow \infty $ we obtain%
\begin{equation*}
p\left( t,x,y\right) \approx \frac{1}{t^{\alpha _{2}/2}},
\end{equation*}%
which, of course, matches (\ref{ptmax}). Assume now that $\left\vert
x\right\vert >\sqrt{t}\geq \left\vert y\right\vert $. Substituting (\ref{Di}%
) into (\ref{T3i}), we obtain%
\begin{equation*}
p\left( t,x,y\right) \asymp C\left( \frac{1}{t^{\alpha_1/2}}\left( \frac{%
\left\vert y\right\vert }{\sqrt{t}}\right) ^{2-\alpha _{1}}+\frac{1}{%
t^{\alpha _{2}/2}}\right) e^{-b\frac{d^{2}(x,y)}{t}}.
\end{equation*}%
A similar estimate holds in the case $|y|>\sqrt{t}\geq |x|$.

Finally, if $x,y\in E_{2}$ then we have by Theorem \ref{T2} that for all $%
t>1 $%
\begin{equation*}
p(t,x,y)\asymp \frac{C}{V_{2}(x,\sqrt{t})}e^{-b\frac{d^{2}\left( x,y\right) 
}{t}}.
\end{equation*}
\end{example}

\begin{example}[Heat kernel on $\mathcal{R}^{1}\#\mathcal{R}^{2}$]
\RM Consider $M=M_{1}\#M_{2}=\mathcal{R}^{1}\#\mathcal{R}^{2}$ (see Fig. \ref%
{r1+r2}). Suppose that $x\in E_{1}$, $y\in E_{2}$. Then by Theorem \ref{T1}$%
\left( ii\right) _{3}$ or by the estimate (\ref{Corii3}) of Corollary \ref%
{power}%
\begin{equation*}
p(t,x,y)\asymp \frac{C}{t}\left( 1+\frac{|x|}{|x|+\sqrt{t}}U(y,t)\log
t\right) e^{-b\frac{d^{2}\left( x,y\right) }{t}}.
\end{equation*}%
Using (\ref{function U}) we obtain: if $|y|>\sqrt{t}$, then 
\begin{equation*}
p(t,x,y)\asymp \frac{C}{t}e^{-b\frac{d^{2}(x,y)}{t}};
\end{equation*}%
if $|x|,|y|\leq \sqrt{t}$, then%
\begin{equation*}
p(t,x,y)\asymp \frac{C}{t}\left( 1+\frac{|x|}{\sqrt{t}}\log \frac{e\sqrt{t}}{%
|y|}\right) ,
\end{equation*}%
and if $|x|>\sqrt{t}\geq |y|$, then%
\begin{equation*}
p(t,x,y)\asymp \frac{C}{t}\log \frac{e\sqrt{t}}{|y|}e^{-b\frac{d^{2}(x,y)}{t}%
}.
\end{equation*}%
\newline

Assume that $x,y\in E_{1}$. If $\min (\left\vert x\right\vert ,\left\vert
y\right\vert )\leq \sqrt{t}$, then we obtain by (\ref{T3ii}) and (\ref{Di})%
\begin{equation*}
p(t,x,y)\approx \frac{1}{t}\left( 1+\frac{\left\vert x\right\vert \left\vert
y\right\vert }{\sqrt{t}}+\frac{\left\vert x\right\vert +\left\vert
y\right\vert }{\sqrt{t}}\log t\right) e^{-b\frac{d^{2}(x,y)}{t}}.
\end{equation*}%
In particular, if $|x|>\sqrt{t}\geq |y|$, we obtain 
\begin{equation*}
p(t,x,y)\asymp \frac{C}{t}\left( |y|+\log t\right) e^{-b\frac{d^{2}(x,y)}{t}%
}.
\end{equation*}%
Similar estimate follows when $|y|>\sqrt{t}\geq |x|$. If $\min (|x|,|y|)>%
\sqrt{t}$, we obtain by Theorem \ref{T2} 
\begin{equation*}
p(t,x,y)\asymp \frac{C}{\sqrt{t}}e^{-b\frac{d^{2}(x,y)}{t}}.
\end{equation*}

In the case $x,y\in E_{2}$, we obtain by Theorem \ref{T2} 
\begin{equation}
p(t,x,y)\asymp \frac{C}{t}e^{-b\frac{d^{2}(x,y)}{t}} .  \label{ptcb}
\end{equation}
\end{example}

\begin{example}[Heat kernel on $\mathbb{R}^{2}\#\mathbb{R}^{2}$]
Suppose that $x\in E_{1}$ and $y\in E_{2}$. If $\left\vert x\right\vert
,\left\vert y\right\vert \leq \sqrt{t}$, then by (\ref{T1ii2}), or by (\ref%
{power (ii)_3}) 
\begin{equation*}
p(t,x,y)\approx \frac{1}{t\log ^{2}t}\left( \log t+\log ^{2}\sqrt{t}-\log
|x|\log |y|\right) .
\end{equation*}%
In particular, in the long time regime $\left\vert x\right\vert \approx
\left\vert y\right\vert \approx 1$ we obtain%
\begin{equation*}
p\left( t,x,y\right) \approx \frac{1}{t},
\end{equation*}%
and in the medium time regime $\left\vert x\right\vert \approx \left\vert
y\right\vert \approx \sqrt{t}$ we have%
\begin{equation*}
p(t,x,y)\approx \frac{1}{t\log t},
\end{equation*}%
which means a mild bottleneck-effect on $\mathbb{R}^{2}\#\mathbb{R}^{2}$. 
\newline

If $\left\vert x\right\vert ,\left\vert y\right\vert \geq \sqrt{t}$ then the
heat kernel on $\mathbb{R}^{2}\#\mathbb{R}^{2}$ satisfies (\ref{power (ii)_2}%
), that is, 
\begin{equation*}
p\left( t,x,y\right) \asymp \frac{C}{t}\left( \frac{1}{\log \left\vert
x\right\vert }+\frac{1}{\log \left\vert y\right\vert }\right) e^{-b\frac{%
d^{2}\left( x,y\right) }{t}}.
\end{equation*}%
The cases $\left\vert x\right\vert >\sqrt{t}\geq \left\vert y\right\vert $
and $\left\vert y\right\vert >\sqrt{t}\geq \left\vert x\right\vert $ are
covered by (\ref{x>y}) and (\ref{y>x}), respectively.

If $x,y\in E_{1}$ or $x,y\in E_{2}$ then $p\left( t,x,y\right) $ satisfies (%
\ref{ptcb}) by Theorem \ref{T2}.
\end{example}

\begin{example}[Heat kernel on $\mathcal{R}^{1}\#\mathcal{R}^{1}\#\mathcal{R}%
^{2}$]
\RM Let $M=M_{1}\#M_{2}\#M_{3}=\mathcal{R}^{1}\#\mathcal{R}^{1}\#\mathcal{R}%
^{2}$ (see Fig. \ref{r2+r1}). If $x$ and $y$ are at the same end, or $x\in 
\mathcal{R}^{1}$ and $y\in \mathcal{R}^{2}$, then the heat kernel $p(t,x,y)$
satisfies same estimates as in the above case $\mathcal{R}^{1}\#\mathcal{R}%
^{2}$.

Assume now that $x\in E_{1}$ and $y\in E_{2}$. Then by Corollary \ref{power}$%
\left( ii\right) _{1}$ we obtain the following estimates: if $\min
(\left\vert x\right\vert ,\left\vert y\right\vert )\leq \sqrt{t}$ then 
\begin{equation*}
p(t,x,y)\approx \frac{1}{t}\left( 1+\frac{\log t}{\sqrt{t}}(\left\vert
x\right\vert +\left\vert y\right\vert )\right) ,
\end{equation*}%
and if $\min (|x|,|y|)>\sqrt{t}$, then 
\begin{equation*}
p(t,x,y)\asymp \frac{\log t}{t}e^{-b\frac{d^{2}(x,y)}{t}}.
\end{equation*}%
In particular, if $\left\vert x\right\vert \approx \left\vert y\right\vert
\approx \sqrt{t}$, then 
\begin{equation*}
p(t,x,y)\approx \frac{\log t}{t}.
\end{equation*}
\end{example}

\section{Some auxiliary estimates}

\setcounter{equation}{0}\label{SecGeneral}In this section we prove some
auxiliary results to be used in the proof of Theorem \ref{main theorem}.

Let $(M,\mu )$ be a geodesically complete non-compact weighted manifold (we
do not even assume parabolicity of $M$ unless it is clearly stated).  For
any open set $\Omega \subset M$, denote by $p_{\Omega }\left( t,x,y\right) $
the Dirichlet heat kernel in $\Omega $. Assume from now on that $\Omega $
has smooth boundary. Then $p_{\Omega }\left( t,x,y\right) =0$ whenever $x$
or $y$ belongs to $\partial \Omega $. Denote also by $P_{t}^{\Omega }$ the
associated heat semigroup. Denote as before by $(\{X_{t}\}_{t\geq 0},\{%
\mathbb{P}_{x}\}_{x\in M})$ Brownian motion on $M$. Let $\tau _{\Omega }$ be
the first exit time of $X_{t}$ from $\Omega $, that is,%
\begin{equation*}
\tau _{\Omega }=\inf \left\{ t>0:X_{t}\notin \Omega \right\} .
\end{equation*}%
Then, for any bounded continuous function $f$ on $M$,%
\begin{equation}
P_{t}^{\Omega }f\left( x\right) =\mathbb{E}_{x}\left( f\left( X_{t}\right)
1_{\left\{ \tau _{\Omega }>t\right\} }\right) .  \label{PE}
\end{equation}

\subsection{Integrated resolvent}

The resolvent operator $G_{\lambda }^{\Omega }$ is defined for any $\lambda
>0$ as an operator on non-negative measurable functions $f$ on $\Omega $ by%
\begin{equation*}
G_{\lambda }^{\Omega }f\left( x\right) =\int_{0}^{\infty }e^{-\lambda
t}P_{t}^{\Omega }f\,dt.
\end{equation*}%
Clearly, $G_{\lambda }^{\Omega }$ is a linear operator that preserves
non-negativity. Note that by definition $G_{\lambda }^{\Omega }f$ vanishes
in $\Omega ^{c}$. If $\Omega =M$ then we write $G_{\lambda }\equiv
G_{\lambda }^{M}$. Clearly, $G_{\lambda }^{\Omega }$ is an integral operator
whose kernel%
\begin{equation*}
g_{\lambda }^{\Omega }\left( x,y\right) =\int_{0}^{\infty }e^{-\lambda
t}p_{\Omega }\left( t,x,y\right) dt
\end{equation*}%
is called the \textit{resolvent kernel}. In general, $G_{\lambda }^{\Omega
}f $ may take value $+\infty $. However, if $f$ is bounded and continuous
then the function $u=G_{\lambda }^{\Omega }f$ is finite and, moreover, is
the minimal non-negative solution of the equation $\Delta u-\lambda u=-f$
(see \cite{G AMS}). It follows from (\ref{PE}) that%
\begin{equation}
G_{\lambda }^{\Omega }f\left( x\right) =\mathbb{E}_{x}\left( \int_{0}^{\tau
_{\Omega }}f\left( X_{t}\right) e^{-\lambda t}dt\right) .  \label{FC}
\end{equation}%
If in addition $\Omega $ is precompact then the function $u=G_{\lambda
}^{\Omega }f$ solves the Dirichlet problem%
\begin{equation*}
\left\{ 
\begin{array}{ll}
\Delta u-\lambda u=-f & \text{in }\Omega , \\ 
u=0 & \text{on }\partial \Omega .%
\end{array}%
\right.
\end{equation*}%
For the proof of Theorem \ref{main theorem} we need the notion of \textit{%
integrated resolvent.} Fix a compact set $K\subset M$ with non-empty
interior $\mathring{K}$ such that $K$ is the closure of $\mathring{K}$ and
the boundary $\partial K$ is smooth. Fix also once and for all a reference
point $o\in K$.

For any $\lambda >0$, define the function $\gamma _{\lambda }$ on $M$ by%
\begin{equation}
\gamma _{\lambda } (x):=G_{\lambda } 1_{K}\left( x\right)
=\int_{K}g_{\lambda } \left( x,z\right) d\mu \left( z\right)
=\int_{K}\int_{0}^{\infty }e^{-\lambda t}p \left( t,x,z\right) dz\,dt.
\label{Gla}
\end{equation}%
The function $\gamma _{\lambda }$ is called the integrated resolvent. Set
also%
\begin{equation}
\dot{\gamma}_{\lambda } =G_{\lambda } \gamma _{\lambda } .  \label{gadot}
\end{equation}%
It follows from the resolvent equation $G_{\alpha }-G_{\beta }=\left( \beta
-\alpha \right) G_{\alpha }G_{\beta }$ that 
\begin{equation}
\dot{\gamma}_{\lambda }=-\frac{\partial }{\partial \lambda }\gamma _{\lambda
}=\int_{K}\int_{0}^{\infty }te^{-\lambda t}p\left( t,x,z\right) dz\,dt.
\label{Gladot}
\end{equation}

\begin{lemma}
\label{Lemma of G}

\begin{itemize}
\item[$\left( i\right) $] If there exist positive constants $C,\lambda _{0}$
and a function $F:\mathbb{R}_{+}\rightarrow \mathbb{R}_{+}$ such that, for
some $x\in K$, 
\begin{equation}
\gamma _{\lambda }(x)\leq \frac{C}{\lambda F(\frac{1}{\sqrt{\lambda }})}%
\quad \text{for all }\lambda \in (0,\lambda _{0}],  \label{Gl<}
\end{equation}%
then there exist positive constants $C^{\prime },t_{0}$ such that 
\begin{equation}
p(t,o,o)\leq \frac{C^{\prime }}{F(\sqrt{t})}\quad \text{for all }t\geq t_{0}.
\label{on-diagonal from resolvent}
\end{equation}

\item[$\left( ii\right) $] If there exist positive constants $C,\lambda _{0}$
such that, for some $x\in K$,%
\begin{equation}
\dot{\gamma}_{\lambda }(x)\leq \frac{C}{\lambda }\quad \text{for all }%
\lambda \in (0,\lambda _{0}],  \label{Gdot<}
\end{equation}%
then there exist positive constants $C^{\prime },t_{0}$ such that 
\begin{equation*}
p(t,o,o)\leq \frac{C^{\prime }}{t}\quad \text{for all }t\geq t_{0}.
\end{equation*}
\end{itemize}
\end{lemma}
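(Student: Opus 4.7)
My plan is to invert the Laplace-transform hypothesis using two elementary ingredients. The first is monotonicity: $t\mapsto p(t,o,o)$ is non-increasing, since $p(t,o,o)=\|p(t/2,o,\cdot)\|_{L^{2}(M,\mu)}^{2}$ and the semigroup is an $L^{2}$-contraction. The second is a pair of Harnack comparisons on the compact set $K$: a parabolic one that controls the occupation probability $u(t,o):=P_{t}1_{K}(o)$ from below by the diagonal, and an elliptic one that relates $\gamma_{\lambda}$ at two different points of $K$. Once both are in hand, turning the Laplace-transform estimate into a pointwise bound is a direct truncate-and-compare argument.

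For the parabolic step I pick a small ball $B=B(o,r)\subset\mathring K$ and apply the local parabolic Harnack inequality to the positive heat solution $(s,z)\mapsto p(s,o,z)$ on a cylinder based at $(t,B)$, kept away from the singularity at $(0,o)$; combined with Step~1 this gives $p(t,o,z)\geq c\,p(t,o,o)$ for $z\in B$ and $t\geq t_{0}$, hence $u(t,o)\geq c\mu(B)\,p(t,o,o)$ for $t\geq t_{0}$. For the elliptic step, the positive function $\gamma_{\lambda}$ satisfies $(\Delta-\lambda)\gamma_{\lambda}=-1_{K}$, a uniformly elliptic equation with coefficients bounded uniformly in $\lambda\in(0,\lambda_{0}]$; the classical elliptic Harnack inequality on the compact $K$ therefore yields $\gamma_{\lambda}(o)\leq C(K)\,\gamma_{\lambda}(x)$ for the distinguished $x\in K$ appearing in the hypothesis. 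Combining these, for $\lambda\leq\min(\lambda_{0},1/(2t_{0}))$,
\begin{equation*}
\gamma_{\lambda}(x)\;\geq\;C(K)^{-1}\gamma_{\lambda}(o)\;\geq\;c_{1}\!\int_{t_{0}}^{1/\lambda}e^{-\lambda t}p(t,o,o)\,dt\;\geq\;c_{1}e^{-1}p(1/\lambda,o,o)(1/\lambda-t_{0})\;\geq\;\frac{c_{2}}{\lambda}\,p(1/\lambda,o,o),
\end{equation*}
where the last two steps use $e^{-\lambda t}\geq e^{-1}$ and $p(t,o,o)\geq p(1/\lambda,o,o)$ for $t\leq 1/\lambda$. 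Substituting the hypothesis $\gamma_{\lambda}(x)\leq C/(\lambda F(1/\sqrt\lambda))$ and setting $t=1/\lambda$ proves~(\ref{on-diagonal from resolvent}).

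Part~(ii) follows by the identical scheme with $\dot\gamma_{\lambda}(x)=\int_{0}^{\infty}t\,e^{-\lambda t}u(t,x)\,dt$ in place of $\gamma_{\lambda}$: since $\int_{t_{0}}^{1/\lambda}t e^{-\lambda t}dt\gtrsim 1/\lambda^{2}$, the same pair of Harnack estimates gives $\dot\gamma_{\lambda}(x)\geq c\lambda^{-2}p(1/\lambda,o,o)$, and the hypothesis $\dot\gamma_{\lambda}(x)\leq C/\lambda$ then forces $p(1/\lambda,o,o)\leq C'\lambda$, i.e.\ $p(t,o,o)\leq C'/t$. The only substantive content of the proof is the pair of Harnack comparisons; in the smooth weighted Riemannian setting on the compact $K$ both are classical and present no real obstacle, so the remainder is elementary Laplace-transform bookkeeping combined with the monotonicity of the diagonal heat kernel.
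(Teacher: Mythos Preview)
Your approach is essentially the same as the paper's: both use monotonicity of $t\mapsto p(t,o,o)$, a local Harnack comparison on $K$, truncation of the Laplace integral, and the substitution $\lambda=1/t$. The only difference is organizational: the paper applies the \emph{parabolic} Harnack inequality directly to the heat kernel to obtain $p(s,x,z)\ge c_{1}\,p(s-c_{2}\delta,o,o)$ for all $x,z\in K$ and $s>2c_{2}\delta$, thereby handling both the given point $x$ and the integration variable $z$ in one stroke, whereas you split this into a parabolic step (for $z$) and an elliptic step (for $x$).

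One small caution about the elliptic step. The function $\gamma_{\lambda}$ satisfies the \emph{inhomogeneous} equation $(\Delta-\lambda)\gamma_{\lambda}=-1_{K}$ inside $K$, so the classical elliptic Harnack inequality does not give the clean multiplicative bound $\gamma_{\lambda}(o)\le C(K)\gamma_{\lambda}(x)$; the standard inhomogeneous version (e.g.\ Gilbarg--Trudinger) yields $\gamma_{\lambda}(o)\le C(K)\bigl(\gamma_{\lambda}(x)+1\bigr)$. This extra additive constant is harmless here (it contributes at most an $O(1/t)$ term, and in the parabolic setting of the paper $\gamma_{\lambda}\to\infty$ as $\lambda\to0$ anyway), but as stated your sentence is not quite accurate. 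The simplest fix is to drop the elliptic step and use parabolic Harnack for the $x$-variable as well, exactly as the paper does.
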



\begin{proof}
$\left( i\right) $ Set $\delta =(\mathrm{diam}K)^{2}$. By the local Harnack
inequality, there exit positive constants $c_{1},c_{2}$ such that, for all $%
x,z\in K$ and $s>2c_{2}\delta $, 
\begin{equation}
p(s,x,z)\geq c_{1}p\left( s-c_{2}\delta ,o,o\right) ,  \label{local Harnack}
\end{equation}%
which implies by (\ref{Gla}), for all $x\in K$,%
\begin{equation*}
\gamma _{\lambda }(x)\geq c_{1}\mathrm{vol}(K)\int_{2c_{2}\delta }^{\infty
}e^{-\lambda s}p(s-c_{2}\delta ,o,o)ds.
\end{equation*}%
Using the monotonicity of $p(s,o,o)$ with respect to $s$ (see \cite[%
Exercises 7.22]{G AMS}), we obtain, for $t\geq 4c_{2}\delta ,$%
\begin{align}
\gamma _{\lambda }\left( x\right) & \geq c_{1}\mathrm{vol}%
(K)\int_{t/2}^{t}e^{-\lambda s}p(s-c_{2}\delta ,o,o)ds  \notag \\
& \geq c_{1}\mathrm{vol}(K)\int_{t/2}^{t}e^{-\lambda s}p(t,o,o)ds\geq
cte^{-\lambda t}p(t,o,o).  \label{G2}
\end{align}%
Set $t_{0}:=\max \{4c_{2}\delta ,\lambda _{0}^{-1}\}$. For any $t\geq t_{0}$
and using (\ref{Gl<}) and (\ref{G2}) with $\lambda =t^{-1}$, we obtain%
\begin{equation*}
\frac{C}{\lambda F(\frac{1}{\sqrt{\lambda }})}\geq cte^{-1}p(t,o,o),
\end{equation*}%
which implies%
\begin{equation*}
p(t,o,o)\leq \frac{C}{F(\sqrt{t})}.
\end{equation*}

$\left( ii\right) $ Arguing as in $\left( i\right) $ and using (\ref{local
Harnack}) and (\ref{Gladot}), we obtain, for $t\geq 4c_{2}\delta $ and $x\in
K$,%
\begin{eqnarray}
\dot{\gamma}_{\lambda }(x) &=&\int_{0}^{\infty }\int_{K}se^{-\lambda
s}p(s,x,z)dsd\mu (z)  \notag \\
&\geq &c_{1}\mathrm{vol}(K)\int_{t/2}^{t}se^{-\lambda s}p(t,o,o)ds\geq
ct^{2}e^{-\lambda t}p(t,o,o).  \label{Gdot2}
\end{eqnarray}%
Assuming $t\geq t_{0}:=\max \{4c_{2}\delta ,\lambda _{0}^{-1}\}$ and using (%
\ref{Gdot<}) and (\ref{Gdot2}) with $\lambda =t^{-1}$, we obtain%
\begin{equation*}
\frac{C}{\lambda }\geq ct^{2}e^{-1}p(t,o,o),
\end{equation*}%
which implies%
\begin{equation*}
p(t,o,o)\leq \frac{C}{t}.
\end{equation*}
\end{proof}

\begin{remark}
\RM Lemma \ref{Lemma of G} will be used in the proof of Theorem \ref{main
theorem} in Section \ref{SecUpper} as follows. In the case when all the ends
are subcritical, we will prove the following upper bound for the integrated
resolvent:%
\begin{equation}
\sup_{\partial K}\gamma _{\lambda }\leq \frac{C}{\lambda V_{\max }(\frac{1}{%
\sqrt{\lambda }})},  \label{supga}
\end{equation}%
which then implies by Lemma \ref{Lemma of G}$\left( i\right) $ the desired
upper bound 
\begin{equation*}
p(t,o,o)\leq \frac{C}{V_{\max }(\sqrt{t})}.
\end{equation*}%
However, in the case when one of the ends is critical, we obtain instead of (%
\ref{supga}) a weaker inequality%
\begin{equation}
\sup_{\partial K}\gamma _{\lambda }\leq C\log \frac{1}{\lambda },
\label{logla}
\end{equation}%
which yields 
\begin{equation*}
p(t,o,o)\leq C\frac{\log t}{t}
\end{equation*}%
instead of the desired estimate 
\begin{equation}
p(t,o,o)\leq \frac{C}{t}.  \label{ptt}
\end{equation}%
In order to be able to prove the latter, we will use the second part of
Lemma \ref{Lemma of G}. Namely, we will prove that in the critical case%
\begin{equation}
\sup_{\partial K}\dot{\gamma}_{\lambda }\leq \frac{C}{\lambda },
\label{1/la}
\end{equation}%
which then will imply (\ref{ptt}) by Lemma \ref{Lemma of G}$\left( ii\right) 
$.

Note that the estimate (\ref{logla}) of $\gamma _{\lambda }$ is already
optimal as it is matched by the estimate (\ref{1/la}) of $\dot{\gamma}%
_{\lambda }=-\frac{\partial }{\partial \lambda }\gamma _{\lambda }$.
However, the function $\gamma _{\lambda }$ alone does not allow to recover
an optimal estimate of the heat kernel, while its $\lambda $-derivative $%
\dot{\gamma}_{\lambda }$ does.
\end{remark}

\subsection{Comparison principles}

Fix an open set $\Omega \subset M$ and $\lambda >0$. We say that a function $%
u$ is $\lambda $-harmonic in $\Omega $ if it satisfies in $\Omega $ the
equation $\Delta u-\lambda u=0.$ A function $u$ is called $\lambda $%
-superharmonic if $\Delta u-\lambda u\leq 0$. We will frequently use the
following minimum principle: if $\Omega $ is precompact, $u\in C\left( 
\overline{\Omega }\right) $ is $\lambda $-superharmonic in $\Omega $ and $%
u\geq 0$ on $\partial \Omega $ then $u\geq 0$ in $\Omega $. It implies the
comparison principle: if $u,v\in C\left( \overline{\Omega }\right) $, $u$ is 
$\lambda $-superharmonic in $\Omega $ and $v$ is $\lambda $-harmonic in $%
\Omega $ then%
\begin{equation}
u\geq v\ \text{on }\partial \Omega \ \ \Rightarrow \ u\geq v\ \text{in }%
\Omega .  \label{u>v}
\end{equation}

Let now $\Omega $ be an exterior domain, that is, $\Omega =F^{c}$ where $F$
is a compact subset of $M$. Let $v\in C\left( \overline{\Omega }\right) $ be
non-negative and $\lambda $-harmonic in $\Omega $.\ We say that $v$ is 
\textit{minimal} in $\Omega $ if there exists an exhaustion $\left\{
U_{k}\right\} $ of $M$ by precompact open sets $U_{k}\supset F$ and a
sequence $\left\{ v_{k}\right\} $ of functions $v_{k}\in C\left( \overline{%
U_{k}\setminus F}\right) $ that are non-negative and $\lambda $-harmonic in $%
U_{k}\setminus F$ and such that $v_{k}|_{\partial U_{k}}=0$ and $%
v_{k}\uparrow v$ in $\overline{\Omega }$. Then the following modification of
the comparison principle holds in $\Omega $: if $u,v\in C\left( \overline{%
\Omega }\right) $, $u$ is non-negative $\lambda $-superharmonic in $\Omega $
and $v$ is non-negative minimal $\lambda $-harmonic in $\Omega $ then (\ref%
{u>v}) is satisfied. Indeed, by the comparison principle in $U_{k}\setminus
F $ we obtain $u\geq v_{k}$ whence the claim follows.

We are left to mention that, for any non-negative bounded function $f$ with
compact support, the function $G_{\lambda }f$ is non-negative, minimal, $%
\lambda $-harmonic outside $\limfunc{supp}f$, since $G_{\lambda
}^{U_{k}}f\uparrow G_{\lambda }f$.

\subsection{Functions $\Phi _{\protect\lambda }^{\Omega }$ and $\Psi _{%
\protect\lambda }^{\Omega }$}

In any open set $\Omega \subset M$, consider a function%
\begin{equation}
\Phi _{\lambda }^{\Omega }:=\lambda G_{\lambda }^{\Omega }1=\int_{0}^{\infty
}\lambda e^{-\lambda t}P_{t}^{\Omega }1\,dt.  \label{Fidef}
\end{equation}%
Since $0\leq P_{t}^{\Omega }1\leq 1$, we see that 
\begin{equation}
0\leq \Phi _{\lambda }^{\Omega }\leq 1.  \label{Fi1}
\end{equation}%
It follows from (\ref{PE}) that 
\begin{equation}
\Phi _{\lambda }^{\Omega }(x)=\int_{0}^{\infty }\lambda e^{-\lambda t}%
\mathbb{P}_{x}(\tau _{\Omega }>t)dt.  \label{Fi}
\end{equation}

Let $A$ be a precompact open subset of $M$ with smooth boundary and let $%
K\subset A$. Set 
\begin{equation}
\gamma _{\lambda }^{A}(x):=G_{\lambda }^{A}1_{K}\left( x\right)
=\int_{K}g_{\lambda }^{A}\left( x,z\right) d\mu \left( z\right)
=\int_{K}\int_{0}^{\infty }e^{-\lambda t}p_{A}(t,x,z)dtd\mu (z) .  \label{GA}
\end{equation}

\begin{lemma}
\label{Lemga}$\left( a\right) $ The following inequality holds in $A$:%
\begin{equation}
\gamma _{\lambda }-\gamma _{\lambda }^{A}\leq (\sup_{\partial A}\gamma
_{\lambda })\left( 1-\Phi _{\lambda }^{A}\right) .  \label{gaA}
\end{equation}%
$\left( b\right) $ The following inequality holds in $K^{c}$:%
\begin{equation}
\gamma _{\lambda }\leq (\sup_{\partial K}\gamma _{\lambda })\left( 1-\Phi
_{\lambda }^{K^{c}}\right) .  \label{gaKc}
\end{equation}
\end{lemma}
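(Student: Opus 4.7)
The plan is to apply the comparison principles of the previous subsection after identifying both sides of each inequality with explicit $\lambda$-(super)harmonic functions, and then reading off the boundary values on $\partial A$, resp. $\partial K$.

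For part $(a)$, I would observe that $\gamma_{\lambda}$ and $\gamma_{\lambda}^{A}$ both satisfy $\Delta u - \lambda u = -1_{K}$ in $A$ (the former on all of $M$, the latter in $A$ with Dirichlet condition on $\partial A$), so the difference $w := \gamma_{\lambda} - \gamma_{\lambda}^{A}$ is $\lambda$-harmonic in $A$. Next, from $\Phi_{\lambda}^{A} = \lambda G_{\lambda}^{A}1$ in (\ref{Fidef}) one gets $\Delta \Phi_{\lambda}^{A} - \lambda \Phi_{\lambda}^{A} = -\lambda$ in $A$, and hence $1-\Phi_{\lambda}^{A}$ is $\lambda$-harmonic in $A$ as well. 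On $\partial A$ both $\gamma_{\lambda}^{A}$ and $\Phi_{\lambda}^{A}$ vanish by their Dirichlet conditions, so $w = \gamma_{\lambda} \leq \sup_{\partial A}\gamma_{\lambda}$ while $(\sup_{\partial A}\gamma_{\lambda})(1-\Phi_{\lambda}^{A})$ equals $\sup_{\partial A}\gamma_{\lambda}$ on $\partial A$. Since $A$ is precompact with smooth boundary, the classical minimum principle applied to the $\lambda$-harmonic function $(\sup_{\partial A}\gamma_{\lambda})(1-\Phi_{\lambda}^{A}) - w$ then yields (\ref{gaA}).

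For part $(b)$, I would run essentially the same argument in the exterior domain $K^{c}$: since $1_{K}$ vanishes in $K^{c}$, the function $\gamma_{\lambda}$ is $\lambda$-harmonic there, and $1 - \Phi_{\lambda}^{K^{c}}$ is $\lambda$-harmonic in $K^{c}$ by the same computation as above. On $\partial K$ the Dirichlet condition gives $\Phi_{\lambda}^{K^{c}} = 0$, so the candidate majorant equals $\sup_{\partial K}\gamma_{\lambda}$, which dominates $\gamma_{\lambda}$ on $\partial K$ by definition.

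The main obstacle in $(b)$ is that $K^{c}$ is unbounded, so the classical minimum principle does not apply directly; in particular we must rule out the possibility that $\gamma_{\lambda}$ accumulates mass at infinity. To overcome this I would invoke the modified comparison principle stated just before the lemma for non-negative \emph{minimal} $\lambda$-harmonic functions on exterior domains. The last remark of that subsection guarantees that $\gamma_{\lambda} = G_{\lambda}1_{K}$ is minimal $\lambda$-harmonic in $K^{c}$, because $1_{K}$ is non-negative, bounded and compactly supported. Taking $u := (\sup_{\partial K}\gamma_{\lambda})(1-\Phi_{\lambda}^{K^{c}})$, which is non-negative and $\lambda$-harmonic in $K^{c}$, and $v := \gamma_{\lambda}$, the modified comparison principle combined with $u \geq v$ on $\partial K$ propagates the inequality into all of $K^{c}$ and produces (\ref{gaKc}).
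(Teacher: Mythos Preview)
Your proposal is correct and follows essentially the same approach as the paper: in both parts you identify $1-\Phi_{\lambda}^{\Omega}$ as a non-negative $\lambda$-harmonic function equal to $1$ on the boundary, recognize the left-hand side as $\lambda$-harmonic in the relevant domain, compare boundary values, and invoke the appropriate comparison principle (the precompact one for $(a)$, the exterior/minimality version for $(b)$). Your explicit identification of the unboundedness obstacle in $(b)$ and the appeal to minimality of $G_{\lambda}1_{K}$ match the paper's reasoning exactly.
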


\begin{proof}
$\left( a\right) $ By (\ref{Fidef}), the function $\Phi _{\lambda }^{A}$
satisfies%
\begin{equation*}
\left\{ 
\begin{array}{ll}
\Delta \Phi _{\lambda }^{A}-\lambda \Phi _{\lambda }^{A}=-\lambda & \text{in 
}A \\ 
\Phi _{\lambda }^{A}=0 & \text{on }\partial A.%
\end{array}%
\right.
\end{equation*}
It follows that the function $u:=1-\Phi _{\lambda }^{A}$ solves the boundary
value problem%
\begin{equation*}
\left\{ 
\begin{array}{ll}
\Delta u-\lambda u=0 & \text{in }A \\ 
u=1 & \text{on }\partial A.%
\end{array}%
\right.
\end{equation*}%
Note that $\gamma _{\lambda }-\gamma _{\lambda }^{A}=G_{\lambda
}1_{K}-G_{\lambda }^{A}1_{K}$ is $\lambda $-harmonic in $A$ and is equal to $%
\gamma _{\lambda }$ on $\partial A$, which implies by the comparison
principle in $A$ that%
\begin{equation*}
\gamma _{\lambda }-\gamma _{\lambda }^{A}\leq (\sup_{\partial A}\gamma
_{\lambda })u\ \ \ \text{in }A,
\end{equation*}%
which proves (\ref{gaA}).

$\left( b\right) $ Set $\Omega =K^{c}$. As in $\left( a\right) $, the
function $u:=1-\Phi _{\lambda }^{\Omega }$ solves the following boundary
value problem:%
\begin{equation*}
\left\{ 
\begin{array}{ll}
\Delta u-\lambda u=0 & \text{in }\Omega \\ 
u=1 & \text{on }\partial \Omega%
\end{array}%
\right.
\end{equation*}%
The function $\gamma _{\lambda }=G_{\lambda }1_{K}$ is non-negative, $%
\lambda $-harmonic, and minimal in $\Omega $. On $\partial \Omega =\partial
K $ we have%
\begin{equation}
\gamma _{\lambda }\leq \sup_{\partial K}\gamma _{\lambda }=(\sup_{\partial
K}\gamma _{\lambda })u.  \label{gau}
\end{equation}%
Since $u$ is non-negative and $\lambda $-harmonic in $\Omega ,$ it follows
by the comparison principle in $\Omega $ that (\ref{gau}) holds also in $%
\Omega $, which proves (\ref{gaKc}).
\end{proof}

Set 
\begin{equation}
\Psi _{\lambda }^{\Omega }:=G_{\lambda }^{\Omega }\left( 1-\Phi _{\lambda
}^{\Omega }\right)  \label{Psidef}
\end{equation}%
and observe that $\Psi _{\lambda }^{\Omega }\geq 0$ by (\ref{Fi1}).

\begin{lemma}
\label{Lemma of Psi} Assume that $M$ is parabolic. Then we have the
following identity for all $x\in \Omega $:%
\begin{equation}
\Psi _{\lambda }^{\Omega }(x)=\int_{0}^{\infty }te^{-\lambda t}\partial _{t}%
\mathbb{P}_{x}(\tau _{\Omega }\leq t)dt.  \label{Psi}
\end{equation}
\end{lemma}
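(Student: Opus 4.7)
The plan is to identify $\Psi_\lambda^\Omega$ with $\partial_\lambda \Phi_\lambda^\Omega$ via the resolvent identity, and then read (\ref{Psi}) off from the probabilistic representation (\ref{Fi}).

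First I would invoke the resolvent identity $G_\alpha^\Omega - G_\beta^\Omega = (\beta-\alpha)\, G_\alpha^\Omega G_\beta^\Omega$ for the Dirichlet semigroup $P_t^\Omega$, which upon letting $\beta \to \alpha$ gives $\partial_\lambda G_\lambda^\Omega = -(G_\lambda^\Omega)^2$. Combined with (\ref{Fidef}) and (\ref{Psidef}), this yields
\[ \Psi_\lambda^\Omega = G_\lambda^\Omega 1 - \lambda (G_\lambda^\Omega)^2 1 = G_\lambda^\Omega 1 + \lambda\, \partial_\lambda G_\lambda^\Omega 1 = \partial_\lambda(\lambda G_\lambda^\Omega 1) = \partial_\lambda \Phi_\lambda^\Omega. \]

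Next, starting from (\ref{Fi}) and the trivial identity $\int_0^\infty \lambda e^{-\lambda t}\, dt = 1$, I would write
\[ 1 - \Phi_\lambda^\Omega(x) = \int_0^\infty \lambda e^{-\lambda t}\, \mathbb{P}_x(\tau_\Omega \le t)\, dt, \]
and integrate by parts in $t$; the boundary contributions vanish since $\mathbb{P}_x(\tau_\Omega \le 0) = 0$ for $x \in \Omega$ and $e^{-\lambda t}\to 0$ dominates $\mathbb{P}_x(\tau_\Omega \le t) \le 1$, giving
\[ 1 - \Phi_\lambda^\Omega(x) = \int_0^\infty e^{-\lambda t}\, \partial_t \mathbb{P}_x(\tau_\Omega \le t)\, dt. \]
Differentiating in $\lambda$ under the integral sign (legitimate on any subinterval $\lambda \ge \lambda_0 > 0$ by the domination $t e^{-\lambda t} \le t e^{-\lambda_0 t}$) and substituting $\partial_\lambda \Phi_\lambda^\Omega = \Psi_\lambda^\Omega$ from the first step produces exactly (\ref{Psi}).

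Parabolicity of $M$ enters only mildly: it guarantees $\mathbb{P}_x(\tau_\Omega < \infty) = 1$ (at least for the $\Omega$'s to which the lemma will be applied, whose complements have nonempty interior), so that $\partial_t \mathbb{P}_x(\tau_\Omega \le t)\, dt$ is a genuine probability measure on $(0,\infty)$; this is reassuring for the probabilistic interpretation of (\ref{Psi}) but is not strictly needed for the algebraic identity above. I do not foresee a real obstacle; the only conceptual step is the resolvent computation that converts the apparently two-step object $\Psi_\lambda^\Omega = G_\lambda^\Omega(1-\Phi_\lambda^\Omega)$ into the clean derivative $\partial_\lambda \Phi_\lambda^\Omega$, after which everything reduces to Fubini and integration by parts.
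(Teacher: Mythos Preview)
Your proof is correct and matches the paper's approach essentially line for line: the paper also derives $\Psi_\lambda^\Omega=\partial_\lambda\Phi_\lambda^\Omega$ from the resolvent identity and obtains (\ref{Psi}) by differentiating the integrated-by-parts version of (\ref{Fi}), merely presenting these two steps in the opposite order. Your remark that parabolicity is not strictly needed for the algebraic identity is also apt; the paper invokes it in passing during the integration by parts, but the boundary terms vanish by the elementary bounds you give.
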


\begin{proof}
Integrating by parts in (\ref{Fi}) together with the parabolicity of $M$, we
obtain%
\begin{eqnarray}
\Phi _{\lambda }^{\Omega }(x) &=&-\int_{0}^{\infty }\mathbb{P}_{x}(\tau
_{\Omega }>t)de^{-\lambda t}=1+\int_{0}^{\infty }e^{-\lambda t}\partial _{t}%
\mathbb{P}_{x}(\tau _{\Omega }>t)  \notag \\
&=&1-\int_{0}^{\infty }e^{-\lambda t}\partial _{t}\mathbb{P}_{x}(\tau
_{\Omega }\leq t).  \label{1-i}
\end{eqnarray}%
On the other hand, we have%
\begin{eqnarray*}
\Psi _{\lambda }^{\Omega } &=&G_{\lambda }^{\Omega }1-G_{\lambda }^{\Omega
}\Phi _{\lambda }^{\Omega }=G_{\lambda }^{\Omega }1-\lambda G_{\lambda
}^{\Omega }G_{\lambda }^{\Omega }1 \\
&=&G_{\lambda }^{\Omega }1+\lambda \frac{\partial }{\partial \lambda }%
G_{\lambda }^{\Omega }1=\frac{\partial }{\partial \lambda }\left( \lambda
G_{\lambda }^{\Omega }1\right) =\frac{\partial }{\partial \lambda }\Phi
_{\lambda }^{\Omega }.
\end{eqnarray*}%
Hence, differentiating (\ref{1-i}) in $\lambda $, we obtain (\ref{Psi}).
\end{proof}

\subsection{Some local estimates}

Recall that, for any open set $A$ containing $K$, we have defined 
\begin{equation*}
\gamma _{\lambda }^{A}(x)=G_{\lambda }^{A}1_{K}\left( x\right)
=\int_{K}\int_{0}^{\infty }e^{-\lambda t}p_{A}(t,x,z)dtd\mu (z).
\end{equation*}%
Set also 
\begin{equation}
\dot{\gamma}_{\lambda }^{A}(x):=G_{\lambda }^{A}\gamma _{\lambda }^{A}\left(
x\right) =-\frac{\partial }{\partial \lambda }\gamma _{\lambda
}^{A}(x)=\int_{K}\int_{0}^{\infty }te^{-\lambda t}p_{A}(t,x,z)dtd\mu (z).
\label{GAdot}
\end{equation}%
Note that $\gamma _{\lambda }^{A}$ and $\dot{\gamma}_{\lambda }^{A}$ vanish
outside $A$. Note also that $\gamma _{\lambda }=\gamma _{\lambda }^{M}$ and $%
\dot{\gamma}_{\lambda }=\dot{\gamma}_{\lambda }^{M}$.

In what follows we fix a precompact open set $A\supset K$ with smooth
boundary.

\begin{lemma}
\label{Lemma iii}There exists a positive constant $C=C\left( A\right) $ such
that, for all $\lambda >0$, 
\begin{equation}
\sup_{A}\gamma _{\lambda }^{A}\leq C,  \label{GC}
\end{equation}%
\begin{equation}
\sup_{A}\dot{\gamma}_{\lambda }^{A}\leq C^{2},  \label{estimate of G^A1}
\end{equation}%
and%
\begin{equation}
\sup_{A}\Psi _{\lambda }^{A}\leq C.  \label{estimate of R_A^1}
\end{equation}
\end{lemma}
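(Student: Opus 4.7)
The plan is to reduce all three bounds to the single fact that, since $A$ is precompact with smooth boundary, the function
\begin{equation*}
h(x) := \mathbb{E}_{x}(\tau _{A})
\end{equation*}
is bounded on $A$. This is classical: $h$ is the unique solution of the Dirichlet problem $\Delta h = -1$ in $A$, $h|_{\partial A}=0$, and the boundedness of $h$ follows from compactness and smoothness of $\partial A$ (alternatively, from the fact that Brownian motion exits any precompact smooth domain in finite expected time uniformly over starting points). Set $C := \sup_{x\in A} h(x) < \infty$.

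First I would observe that, by the Feynman--Kac representation (\ref{FC}),
\begin{equation*}
G_{\lambda }^{A}1(x) = \mathbb{E}_{x}\left( \int_{0}^{\tau _{A}}e^{-\lambda t}\,dt\right) \leq \mathbb{E}_{x}(\tau _{A}) = h(x) \leq C
\end{equation*}
for all $\lambda > 0$ and $x\in A$. Since $1_{K}\leq 1$, this immediately gives $\gamma _{\lambda }^{A}(x)=G_{\lambda }^{A}1_{K}(x)\leq G_{\lambda }^{A}1(x)\leq C$, proving (\ref{GC}).

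Next, using monotonicity of $G_{\lambda }^{A}$ and the bound $\gamma _{\lambda }^{A}\leq C$ just established, together with $G_{\lambda }^{A}1\leq C$ once more, I get
\begin{equation*}
\dot{\gamma}_{\lambda }^{A}(x)=G_{\lambda }^{A}\gamma _{\lambda }^{A}(x) \leq C\cdot G_{\lambda }^{A}1(x)\leq C^{2},
\end{equation*}
which is (\ref{estimate of G^A1}). For (\ref{estimate of R_A^1}), recall from (\ref{Fi1}) that $0\leq 1-\Phi _{\lambda }^{A}\leq 1$, so
\begin{equation*}
\Psi _{\lambda }^{A}(x)=G_{\lambda }^{A}(1-\Phi _{\lambda }^{A})(x)\leq G_{\lambda }^{A}1(x)\leq C.
\end{equation*}

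The only real content is therefore the uniform bound $\sup_{A}\mathbb{E}_{\cdot }(\tau _{A})<\infty $, which is standard; every other step is monotonicity of the resolvent applied to previously obtained sup-bounds. Hence I do not expect any genuine obstacle; the constant $C=C(A)$ depends on $A$ through this single quantity.
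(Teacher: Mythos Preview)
Your proof is correct and essentially identical to the paper's: the paper introduces $u(x)=\int_{A}g^{A}(x,z)\,d\mu(z)$, the solution of $\Delta u=-1$ in $A$ with $u|_{\partial A}=0$, which is exactly your $h(x)=\mathbb{E}_{x}(\tau_{A})$, and then bounds all three quantities by $\sup_{A}u$ via the same monotonicity steps. The only difference is notational---you phrase the key bound probabilistically as $G_{\lambda}^{A}1\le \mathbb{E}_{\cdot}(\tau_{A})$ while the paper writes $g_{\lambda}^{A}\le g^{A}$ analytically---but the argument is the same.
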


\begin{proof}
It follows from (\ref{GA}) that 
\begin{equation*}
\gamma _{\lambda }^{A}\left( x\right) \leq \int_{A}\int_{0}^{\infty
}p_{A}(t,x,z)dt\,d\mu (z)=\int_{A}g^{A}\left( x,z\right) d\mu \left(
z\right) ,
\end{equation*}%
where $g^{A}=g_{0}^{A}$ is the Green function of $\Delta $ in $A$. The
function%
\begin{equation*}
u\left( x\right) =\int_{A}g^{A}(x,z)d\mu (z)
\end{equation*}%
solves the following boundary value problem%
\begin{equation*}
\left\{ 
\begin{array}{ll}
\Delta u=-1 & \text{in }A, \\ 
u=0 & \text{on }\partial A,%
\end{array}%
\right.
\end{equation*}%
which implies that $u\left( x\right) $ is bounded. Hence, (\ref{GC}) holds
with $C=\sup u$.

By (\ref{GAdot}) we have%
\begin{equation*}
\dot{\gamma}_{\lambda }^{A}\left( x\right) =\int_{A}g_{\lambda }^{A}\left(
x,z\right) \gamma _{\lambda }^{A}\left( z\right) d\mu \left( z\right) ,
\end{equation*}%
which implies by (\ref{GC}), for any $x\in A$, 
\begin{equation*}
\dot{\gamma}_{\lambda }^{A}\left( x\right) \leq \sup_{A}\gamma _{\lambda
}^{A}\int_{A}g^{A}\left( x,z\right) d\mu \left( z\right) \leq C\sup u=C^{2},
\end{equation*}%
which proves (\ref{estimate of G^A1}).

Finally, it follows from (\ref{Psidef}) that%
\begin{equation*}
\Psi _{\lambda }^{A}\left( x\right) \leq G_{\lambda }^{A}1\left( x\right)
=\int_{A}g^{A}\left( x,z\right) d\mu \left( z\right) \leq C,
\end{equation*}%
which proves (\ref{estimate of R_A^1}).
\end{proof}

\subsection{Global estimates of $\Phi _{\protect\lambda }^{\Omega }$ and $%
\Psi _{\protect\lambda }^{\Omega }$}

So far we have used a compact set $K$ and a precompact open set $A\supset K$%
. We have also assume that $K$ and $A$ have smooth boundaries.

In the next Lemma we estimate $\inf_{\partial A}\Phi _{\lambda }^{K^{c}} $
from below using additional geometric assumptions. Denote by $K_{\epsilon }$
the $\epsilon $-neighborhood of $K$. We will assume in addition that $%
K_{\epsilon }\subset A$ for some large enough $\epsilon $ specified below.

\begin{lemma}
\label{lemma estimate of important integral} Let $M$ be a geodesically
complete, non-compact parabolic manifold satisfying $($\ref{LY type}$)$, $%
\left( RCA\right) $. Fix a reference point $o\in K$ and set $V(r)=V(o,r)$.
Assume in addition that $K_{\epsilon }\subset A$ for sufficiently large $%
\epsilon =\epsilon \left( K\right) >0$. Then there exists a constant $c>0$
such that 
\begin{equation}
\inf_{\partial A}\Phi _{\lambda }^{K^{c}}\geq c\int_{(\mathrm{diam}%
A)^{2}}^{\infty }(1-e^{-\lambda s})\frac{1}{V(\sqrt{s})H(\sqrt{s})^{2}}ds,
\label{resolvent dirichlet}
\end{equation}%
where%
\begin{equation}
H(r):=1+\left( \int_{1}^{r}\frac{s}{V(s)}ds\right) _{+}.  \label{H}
\end{equation}%
In addition, we have:

\begin{enumerate}
\item[$\left( i\right) $] if $V\left( r\right) $ is subcritical then, for $%
0<\lambda \leq \frac{1}{(\mathrm{diam}A)^{2}}$, 
\begin{equation}
\inf_{\partial A}\Phi _{\lambda }^{K^{c}}\geq c\lambda V(\frac{1}{\sqrt{%
\lambda }}).  \label{resolvent estimate (i)}
\end{equation}

\item[$\left( ii\right) $] If $V\left( r\right) $ is critical then, for $%
0<\lambda \leq \frac{1}{(\mathrm{diam}A)^{2}}$, 
\begin{equation}
\inf_{\partial A}\Phi _{\lambda }^{K^{c}}\geq \frac{c}{\log \frac{1}{\lambda 
}}.  \label{resolvent estimate (ii)}
\end{equation}
\end{enumerate}
\end{lemma}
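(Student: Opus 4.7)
Using $(\ref{Fi})$ and the fact that $\tau=\tau_{K^{c}}$ is the first hitting time of $K$ by Brownian motion, rewrite
\[
\Phi_{\lambda}^{K^{c}}(x)\;=\;\mathbb{E}_{x}\bigl[1-e^{-\lambda\tau}\bigr]\;=\;\int_{0}^{\infty}\lambda e^{-\lambda s}\,\mathbb{P}_{x}(\tau>s)\,ds.
\]
My approach is to establish a pointwise lower bound on the survival probability $\mathbb{P}_{x}(\tau>s)$ and then convert it, via an integration by parts, into the integral on the right of (\ref{resolvent dirichlet}); the asymptotics of $1-e^{-\lambda s}$ then yield (i) and (ii) in the two regimes.

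The central claim is that, under the hypotheses of the Lemma and for $\epsilon$ large enough,
\[
\mathbb{P}_{x}(\tau>s)\;\geq\;\frac{c}{H(\sqrt{s})}\qquad\text{for all }x\in\partial A\text{ and }s\geq(\mathrm{diam}\,A)^{2}.
\]
I would prove this by setting $R=\sqrt{s}$ and decomposing at the exit time $\sigma$ of Brownian motion from the annular region $B(o,R)\setminus K$. Since $\{\sigma<\tau\}=\{X_{\sigma}\in\partial B(o,R)\}$, the strong Markov property at $\sigma$ gives
\[
\mathbb{P}_{x}(\tau>s)\;\geq\;\mathbb{P}_{x}(\sigma<\tau)\cdot\inf_{y\in\partial B(o,R)}\mathbb{P}_{y}(\tau>s).
\]
The harmonic function $u_{R}(x):=\mathbb{P}_{x}(\sigma<\tau)$, characterized by $u_{R}=0$ on $\partial K$ and $u_{R}=1$ on $\partial B(o,R)$, is comparable to $H(d(o,x))/H(R)$ under parabolicity with $(\ref{LY type})$ and $(RCA)$, which is the classical capacity-type estimate on parabolic manifolds (cf.\ \cite{G 1999}); since $d(o,x)$ is bounded on $\partial A$ and $H\geq 1$, one has $u_{R}(x)\gtrsim 1/H(R)$. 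The second factor is bounded below by a positive constant, since by $(\ref{LY type})$ Brownian motion starting on $\partial B(o,R)$ stays outside $B(o,R/2)\supset K$ (for $\epsilon$ large) up to time $s=R^{2}$ with probability bounded below by a constant, by diffusive scaling.

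With this tail bound in hand, the identity $H'(r)=r/V(r)$ yields $\frac{d}{ds}[1/H(\sqrt{s})]=-1/(2V(\sqrt{s})H(\sqrt{s})^{2})$, so that with $T=(\mathrm{diam}\,A)^{2}$ integration by parts gives
\[
\int_{T}^{\infty}(1-e^{-\lambda s})\,\frac{ds}{V(\sqrt{s})H(\sqrt{s})^{2}}\;=\;\frac{2(1-e^{-\lambda T})}{H(\sqrt{T})}+2\int_{T}^{\infty}\frac{\lambda e^{-\lambda s}}{H(\sqrt{s})}\,ds.
\]
The second term on the right is bounded by $(2/c)\,\Phi_{\lambda}^{K^{c}}(x)$ via the tail bound, and the boundary term is absorbed using the trivial estimate $\Phi_{\lambda}^{K^{c}}(x)\geq 1-e^{-\lambda T}$ together with $H(\sqrt{T})\geq 1$; rearranging produces $(\ref{resolvent dirichlet})$.

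For (i) and (ii) one evaluates the integral asymptotically. In the subcritical case $(\ref{subcritical})$ gives $H(r)\leq Cr^{2}/V(r)$, hence $1/[V(\sqrt{s})H(\sqrt{s})^{2}]\geq c\,V(\sqrt{s})/s^{2}$; splitting at $s=1/\lambda$ with $1-e^{-\lambda s}\asymp\lambda s$ below and $\asymp 1$ above, and invoking the doubling of $V$ (consequence of $(\ref{LY type})$), one obtains $\gtrsim\lambda V(1/\sqrt{\lambda})$, which is $(\ref{resolvent estimate (i)})$. In the critical case $V(r)\asymp r^{2}$ and $H(r)\asymp\log r$, so the integrand is $\asymp 1/(s\log^{2}s)$, and the tail $\int_{1/\lambda}^{\infty}ds/(s\log^{2}s)\asymp 1/\log(1/\lambda)$ supplies $(\ref{resolvent estimate (ii)})$. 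The main obstacle is the capacitary lower bound $u_{R}(x)\asymp H(d(o,x))/H(R)$, which is a nontrivial consequence of $(\ref{LY type})+(RCA)+$parabolicity via harmonic-measure estimates on annuli; the rest of the argument is manipulation of the representation and routine asymptotics.
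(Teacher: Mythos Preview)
Your argument is correct in outline and reaches the same conclusions, but it is organized differently from the paper's proof. The paper does not isolate the survival-probability tail bound $\mathbb{P}_x(\tau>s)\ge c/H(\sqrt{s})$ as a separate step. Instead it invokes the Dirichlet heat kernel lower bound of \cite[Theorem 4.9]{G-SC Dirichlet}, namely $p_{K^c}(s,x,y)\ge c/\bigl(V(\sqrt{s})H(\sqrt{s})^2\bigr)$ for $x\in\partial A$, $y$ near $\partial K$ and $s\ge(\mathrm{diam}\,A)^2$, feeds it into the inequality $\mathbb{P}_x(\tau>t)\ge c\int_t^\infty\inf_y p_{K^c}(s,x,y)\,ds$ from \cite[(3.29)]{G-SC hitting}, and then a single Fubini turns $\int_0^\infty\lambda e^{-\lambda t}\int_t^\infty(\cdots)\,ds\,dt$ directly into $\int_0^\infty(1-e^{-\lambda s})(\cdots)\,ds$, giving (\ref{resolvent dirichlet}) without any integration by parts. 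Your route---tail bound first, then integration by parts using $\frac{d}{ds}[1/H(\sqrt{s})]=-1/(2V(\sqrt{s})H(\sqrt{s})^2)$---is equivalent at the level of identities (integrating the paper's heat-kernel bound over $[t,\infty)$ already yields exactly your tail bound), but packages the argument through a capacitary hitting estimate rather than through the Dirichlet heat kernel. Both routes ultimately rest on the same material from \cite{G-SC hitting,G-SC Dirichlet}; the paper's version has the advantage of citing those results explicitly rather than re-deriving the capacity estimate.

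Two small points in your write-up deserve care. First, the ``diffusive scaling'' step with $R=\sqrt{s}$ is not quite enough: the standard exit-time bound under (\ref{LY type}) gives $\mathbb{P}_y(\tau_{B(y,r)}>t)\ge c$ only for $t\le c' r^2$, so one should take $R=A\sqrt{s}$ for a sufficiently large constant $A$ (harmless, since $H(A\sqrt{s})\approx H(\sqrt{s})$ by doubling). Second, the ``trivial estimate'' $\Phi_\lambda^{K^c}(x)\ge 1-e^{-\lambda T}$ is not literally true; what you actually need is $\Phi_\lambda^{K^c}(x)\ge c(1-e^{-\lambda T})/H(\sqrt{T})$, and this follows immediately from your own tail bound at $s=T$ via $\Phi_\lambda^{K^c}(x)\ge(1-e^{-\lambda T})\,\mathbb{P}_x(\tau>T)$, which absorbs the boundary term with the right constant.
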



\begin{proof}
Denote $\Omega =K^{c}$. By \cite[Theorem 4.9 and (4.23)]{G-SC Dirichlet}, if 
$\epsilon $ is big enough then, for all $a,y$ outside $K_{\epsilon /2}$ and
for all $s>0$, the following estimate holds: 
\begin{equation}
p_{\Omega }(s,x,y)\geq C\frac{D(s,x,y)}{V(x,\sqrt{s})}\exp \left( -c\frac{%
d^{2}(x,y)}{s}\right) ,  \label{pOM}
\end{equation}%
where 
\begin{equation*}
D(s,x,y)=\frac{H(|x|)H(\left\vert y\right\vert )}{\left( H(|x|)+H(\sqrt{s}%
)\right) \left( H(\left\vert y\right\vert )+H(\sqrt{s})\right) }.
\end{equation*}%
By \cite[(3.29)]{G-SC hitting}, we have, for any $x\notin K_{\epsilon }$, 
\begin{equation*}
\mathbb{P}_{x}\left( \tau _{\Omega }>t\right) \geq c\int_{t}^{\infty
}\inf_{y\in K_{\epsilon }\backslash K_{\epsilon /2}}p_{\Omega }(s,x,y)ds,
\end{equation*}%
where $c=c\left( K,\epsilon \right) >0$, which implies by (\ref{Fi}) 
\begin{align}
\Phi _{\lambda }^{\Omega }(x)\geq & c\int_{0}^{\infty }\lambda e^{-\lambda
t}\left( \int_{t}^{\infty }\inf_{y\in K_{\epsilon }\backslash K_{\epsilon
/2}}p_{\Omega }(s,x,y)ds\right) dt  \notag \\
=& c\int_{0}^{\infty }\left( \int_{0}^{s}\lambda e^{-\lambda t}\inf_{y\in
K_{\epsilon }\backslash K_{\epsilon /2}}p_{\Omega }(s,x,y)dt\right) ds 
\notag \\
=& c\int_{0}^{\infty }(1-e^{-\lambda s})\inf_{y\in K_{\epsilon }\backslash
K_{\epsilon /2}}p_{\Omega }(s,x,y)ds.  \label{change of variable}
\end{align}%
Assume that $x\in \partial A$. Since $y\in K_{\epsilon }$, we see that $%
d\left( x,y\right) \leq \mathrm{diam}A$. Also, $\left\vert x\right\vert
,\left\vert y\right\vert $ are bounded by $\mathrm{diam}A+e$. It follows
from (\ref{pOM}) that if $s\geq (\mathrm{diam}A)^{2}$ then%
\begin{equation*}
p_{\Omega }(s,x,y)\geq \frac{c}{V(\sqrt{s})H(\sqrt{s})^{2}}.
\end{equation*}%
Substituting into (\ref{change of variable}) yields (\ref{resolvent
dirichlet}). 

In the case $\left( i\right) $, when $V$ is subcritical, we obtain from (\ref%
{H}) 
\begin{equation}
H(r)\approx \frac{r^{2}}{V(r)}.  \label{Hsub}
\end{equation}%
Substituting into (\ref{resolvent dirichlet}), we obtain, for $0<\lambda
\leq \frac{1}{(\mathrm{diam}A)^{2}}$, 
\begin{equation*}
\inf_{\partial A}\Phi _{\lambda }^{\Omega }\geq c\int_{1/\lambda }^{\infty
}(1-e^{-\lambda s})\frac{V(\sqrt{s})}{s^{2}}ds\geq c\lambda V(\frac{1}{\sqrt{%
\lambda }}),
\end{equation*}%
which proves (\ref{resolvent estimate (i)}).

In the case $\left( ii\right) $, when $V$ is critical, we have 
\begin{equation}
H(r)\approx \log r,  \label{Hc}
\end{equation}%
which implies, for $0<\lambda \leq \frac{1}{(\mathrm{diam}A)^{2}}$, 
\begin{align*}
\inf_{\partial A}\Phi _{\lambda }^{\Omega }\geq & c\int_{1/\lambda }^{\infty
}(1-e^{-\lambda s})\frac{ds}{s\log ^{2}s} \\
\geq & c(1-e^{-1})\int_{1/\lambda }^{\infty }\frac{d\log s}{\log ^{2}s} \\
=& c(1-e^{-1})\frac{1}{\log \frac{1}{\lambda }},
\end{align*}%
which proves (\ref{resolvent estimate (ii)}).
\end{proof}

\begin{lemma}
\label{Lemma derivative RHS}Let $M$ be a geodesically complete, non-compact
parabolic manifold satisfying $($\ref{LY type}$)$, $\left( RCA\right) $.
Assume in addition that $K_{\epsilon }\subset A$ for sufficiently large $%
\epsilon =\epsilon \left( K\right) >0$. Assume also that $V\left( r\right)
:=V\left( o,r\right) $ is either critical or subcritical. Then there exists
a constant $C>0$ such that, for small enough $\lambda >0$, 
\begin{equation}
\sup_{\partial A}\Psi _{\lambda }^{K^{c}}\leq \frac{C}{\lambda \log ^{2}%
\frac{1}{\lambda }}.  \label{estimate of R_K^1}
\end{equation}
\end{lemma}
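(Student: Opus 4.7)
The strategy is to exploit the identity $\Psi_\lambda^{K^c} = \partial_\lambda \Phi_\lambda^{K^c}$ established in the proof of Lemma~\ref{Lemma of Psi}, combined with a cancellation identity that yields the extra $\log(1/\lambda)$ improvement over a naive estimate. Differentiating the integral representation
\[
\Phi_\lambda^{K^c}(x) = \int_0^\infty \lambda e^{-\lambda t}\, \P_x(\tau_{K^c} > t)\, dt
\]
with respect to $\lambda$ gives
\[
\Psi_\lambda^{K^c}(x) = \int_0^\infty (1 - \lambda t)\, e^{-\lambda t}\, \P_x(\tau_{K^c} > t)\, dt,
\]
and the key observation is that the weight has vanishing total integral: $\int_0^\infty (1 - \lambda t)\, e^{-\lambda t}\, dt = 0$.

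The plan is to subtract a suitable constant, writing
\[
\Psi_\lambda^{K^c}(x) = \int_0^\infty (1 - \lambda t)\, e^{-\lambda t}\, \bigl[\P_x(\tau_{K^c} > t) - c_\lambda\bigr]\, dt
\]
with $c_\lambda \asymp 1/H(1/\sqrt{\lambda})$, and to invoke a matching \emph{upper} estimate $\P_x(\tau_{K^c} > t) \leq C/H(\sqrt{t})$ for $x \in \partial A$ and $t \geq (\mathrm{diam}\,A)^2$, obtained from the two-sided Dirichlet heat kernel bounds in~\cite{G-SC Dirichlet} (dual to the lower estimate used in the proof of Lemma~\ref{lemma estimate of important integral}). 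The difference $\P_x(\tau_{K^c} > t) - c_\lambda$ is then controlled by
\[
\bigl|\P_x(\tau_{K^c} > t) - c_\lambda\bigr| \leq C\,\frac{|H(1/\sqrt{\lambda}) - H(\sqrt{t})|}{H(\sqrt{t})\, H(1/\sqrt{\lambda})}.
\]
In the critical case $H(r)\approx \log r$, substituting $u = \lambda t$ reduces the bound on $\Psi_\lambda^{K^c}(x)$ to
\[
\Psi_\lambda^{K^c}(x) \leq \frac{C}{\lambda \log^2(1/\lambda)} \int_0^\infty |1 - u|\, e^{-u}\, |\log u|\, du,
\]
and the integral on the right is a finite absolute constant. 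In the subcritical case $H(r)$ grows strictly faster than $\log r$ by~(\ref{or2logr}), so the same computation yields a bound that is even smaller (of order $\lambda^{-\alpha/2}$ when $V(r) \approx r^\alpha$ with $\alpha<2$, which is much smaller than $C/(\lambda \log^2(1/\lambda))$ for small $\lambda$).

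The main obstacle is obtaining an upper estimate on $\P_x(\tau_{K^c} > t)$ sharp enough to match the lower estimate used previously, uniformly in $x \in \partial A$; this requires the precise two-sided Dirichlet heat kernel bounds from~\cite{G-SC Dirichlet}, which in turn depend on the Li--Yau and $(RCA)$ hypotheses. Without the cancellation via $\int (1 - \lambda t)\, e^{-\lambda t}\, dt = 0$, the direct computation gives only $\Psi_\lambda^{K^c}(x) \leq C/(\lambda \log(1/\lambda))$, off by one factor of $\log(1/\lambda)$, so the cancellation together with the careful choice of $c_\lambda$ is essential. The small-$t$ contribution, where the asymptotic $\P_x(\tau_{K^c} > t)\asymp C/H(\sqrt{t})$ fails, is handled using the trivial bound $\P_x(\tau_{K^c} > t) \leq 1$ and absorbed into the constant.
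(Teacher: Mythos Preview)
Your cancellation idea is attractive, but there is a genuine gap at the step where you pass from the two-sided estimate $\P_x(\tau_{K^c}>t)\asymp 1/H(\sqrt t)$ to the inequality
\[
\bigl|\P_x(\tau_{K^c}>t)-c_\lambda\bigr|\le C\,\frac{|H(1/\sqrt\lambda)-H(\sqrt t)|}{H(\sqrt t)\,H(1/\sqrt\lambda)}.
\]
A two-sided bound only gives $c_1/H(\sqrt t)\le \P_x(\tau_{K^c}>t)\le c_2/H(\sqrt t)$ with $c_1<c_2$, and no choice of a single constant $c_\lambda$ can cancel both sides: at $t=1/\lambda$ your claimed right-hand side vanishes, while the left-hand side is still of order $(c_2-c_1)/H(1/\sqrt\lambda)\approx 1/\log(1/\lambda)$ in the critical case. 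Feeding that residual term back into your integral gives exactly the $C/(\lambda\log(1/\lambda))$ bound you were trying to improve, so the argument as written recovers nothing beyond the naive estimate. The Li--Yau and $(RCA)$ hypotheses yield comparisons up to multiplicative constants, not the sharp asymptotic $\P_x(\tau_{K^c}>t)=c_0/H(\sqrt t)+o(1/H(\sqrt t))$ that your cancellation implicitly requires.

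The fix is to control the \emph{increment} $\P_x(\tau_{K^c}>t)-\P_x(\tau_{K^c}>1/\lambda)$ directly via the bound on the density $\partial_t\P_x(\tau_{K^c}\le t)\le C/\bigl(V(\sqrt t)H^2(\sqrt t)\bigr)$ from \cite[Remark~4.3]{G-SC hitting}; integrating this from $t$ to $1/\lambda$ does give exactly your displayed bound, now with $c_\lambda=\P_x(\tau_{K^c}>1/\lambda)$. But once you invoke that density estimate, you are using the same key input as the paper. The paper's proof is organized differently: it works with the representation $\Psi_\lambda^{K^c}(x)=\int_0^\infty t e^{-\lambda t}\partial_t\P_x(\tau_{K^c}\le t)\,dt$ from Lemma~\ref{Lemma of Psi}, splits at $T=1/(\lambda\log^2(1/\lambda))$, handles $[0,T]$ by an integration by parts and a sign observation (which is morally your cancellation), and on $[T,\infty)$ applies the density bound directly. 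Your route, repaired as above, and the paper's route are then two reorganizations of the same estimate; the paper's is somewhat more economical because it never needs the pointwise bound on $\P_x(\tau_{K^c}>t)$ at all.
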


\begin{proof}
Set $\Omega =K^{c}$. Fix $a\in \partial A$ and set 
\begin{equation*}
T=\frac{1}{\lambda \log ^{2}\frac{1}{\lambda }}.
\end{equation*}%
In the identity (\ref{Psi}) for $\Psi _{\lambda }^{\Omega }$, let us
decompose the integration into two intervals: $\left[ 0,T\right] $ and $%
[T,\infty )$. For the first interval, we have by integration by parts%
\begin{equation*}
\int_{0}^{T}te^{-\lambda t}\partial _{t}\mathbb{P}_{a}(\tau _{\Omega }\leq
t)dt=Te^{-\lambda T}\mathbb{P}_{a}\left( \tau _{\Omega }\leq T\right)
-\int_{0}^{T}e^{-\lambda t}(1-\lambda t)\mathbb{P}_{a}(\tau _{\Omega }\leq
t)dt.
\end{equation*}%
Assume that $\lambda <e$ so that $\log ^{2}\frac{1}{\lambda }>1$ and, hence, 
$\lambda T<1$. It follows that $1-\lambda t\geq 0$ on $\left[ 0,T\right] $
and, therefore, the integral in the right hand side of the above identity is
non-negative. It follows that%
\begin{equation*}
\int_{0}^{T}te^{-\lambda t}\partial _{t}\mathbb{P}_{a}(\tau _{\Omega }\leq
t)dt\leq T,
\end{equation*}%
which matches the required estimate (\ref{estimate of R_K^1}).

Let us estimate the integral (\ref{Psi}) over $[T,\infty )$. By \cite[Remark
4.3]{G-SC hitting}, if $\epsilon $ is large enough then, for all $a\in
\partial A\subset \Omega $ and for all $t\geq t_{0}$ (where $t_{0}$ depends
on $\func{diam}A$), we have 
\begin{equation}
\partial _{t}\mathbb{P}_{a}(\tau _{\Omega }\leq t)\leq \frac{C}{V\left( 
\sqrt{t}\right) H^{2}\left( \sqrt{t}\right) },  \label{Hest}
\end{equation}%
where $H$ is defined by (\ref{H}). Assuming that $\lambda $ is so small that 
$T>t_{0}$ and using (\ref{Hest}), we obtain 
\begin{equation}
\int_{T}^{\infty }te^{-\lambda t}\partial _{t}\mathbb{P}_{a}(\tau _{\Omega
}\leq t)dt\leq C\int_{T}^{\infty }\frac{te^{-\lambda t}dt}{V\left( \sqrt{t}%
\right) H^{2}\left( \sqrt{t}\right) }.  \label{sH}
\end{equation}%
Consider first the case when $V\left( r\right) $ is critical, that is, $%
V\left( r\right) \approx r^{2}$. Then $H\left( r\right) \approx \log r$ and
we obtain%
\begin{equation*}
\int_{T}^{\infty }te^{-\lambda t}\partial _{t}\mathbb{P}_{a}(\tau _{\Omega
}\leq t)dt\leq C\int_{T}^{\infty }\frac{e^{-\lambda t}dt}{\log ^{2}t}\leq 
\frac{C}{\log ^{2}T}\int_{0}^{\infty }e^{-\lambda t}dt=\frac{C}{\lambda \log
^{2}T}.
\end{equation*}%
Taking $\lambda >0$ sufficiently small so that $\log ^{2}\frac{1}{\lambda }%
\leq \frac{1}{\sqrt{\lambda }}$, we obtain $T\geq \frac{1}{\sqrt{\lambda }}$
and $\log T\geq \frac{1}{2}\log \frac{1}{\lambda }$, whence%
\begin{equation*}
\int_{T}^{\infty }te^{-\lambda t}\partial _{t}\mathbb{P}_{a}(\tau _{\Omega
}\leq t)dt\leq 4CT,
\end{equation*}%
which proved (\ref{estimate of R_K^1}) in the critical case.

Assume now that $V\left( r\right) $ is subcritical. Then, for $r>2$, we have%
\begin{equation*}
\frac{r^{2}}{V\left( r\right) }\leq 3\int_{r/2}^{r}\frac{tdt}{V\left(
t\right) }\leq 3H\left( r\right) .
\end{equation*}%
Substituting into (\ref{sH}), we obtain 
\begin{equation*}
\int_{T}^{\infty }te^{-\lambda t}\partial _{t}\mathbb{P}_{a}(\tau _{\Omega
}\leq t)dt\leq C\int_{T}^{\infty }\frac{e^{-\lambda t}dt}{H\left( \sqrt{t}%
\right) }\leq \frac{C}{\lambda H(\sqrt{T})}\leq \frac{CV(\sqrt{T})}{\lambda T%
},
\end{equation*}%
where in the last inequality we have used (\ref{Hsub}). In order to prove
that the right hand side is bounded by $CT$, it suffices to verify that%
\begin{equation*}
V(\sqrt{T})\leq C\lambda T^{2}.
\end{equation*}%
Since $\log \frac{1}{\lambda }\approx \log T$ and, hence, $\lambda \approx 
\frac{1}{T\log ^{2}T}$, it suffices to prove that%
\begin{equation*}
V(\sqrt{T})\leq \frac{CT}{\log ^{2}T}
\end{equation*}%
for large enough $T$. Putting $T=r^{2}$, this inequality is equivalent to 
\begin{equation}
\log ^{2}r\leq C\frac{r^{2}}{V(r)}.  \label{subcritical bound critical}
\end{equation}%
Since $M$ is subcritical, there exists a constant $b>0$ such that, for large
enough $r$,%
\begin{equation}
b\leq \int_{1}^{r}\frac{tdt}{V(t)}\leq C\frac{r^{2}}{V(r)}.
\label{subcritical bound}
\end{equation}%
Since 
\begin{equation}
\int_{1}^{r}\frac{tdt}{V(t)}=\int_{1}^{r}\frac{t^{2}}{V(t)}d\log t,
\label{subcritical bound 2}
\end{equation}%
substituting (\ref{subcritical bound}) into the right hand side of (\ref%
{subcritical bound 2}), we obtain 
\begin{equation*}
\log r=\int_{1}^{r}d\log t\leq \int_{1}^{r}\frac{C}{b}\frac{t^{2}}{V(t)}%
d\log t=\int_{1}^{r}\frac{C}{b}\frac{tdt}{V(t)}\leq \frac{C^{2}}{b}\frac{%
r^{2}}{V(r)}.
\end{equation*}%
Substituting this into (\ref{subcritical bound 2}) again, we obtain for
large $r>0$, 
\begin{equation*}
\log ^{2}r=2\int_{1}^{r}\log td\log t\leq 2\int_{1}^{r}\frac{C^{2}}{b}\frac{%
t^{2}}{V(t)}d\log t\leq \frac{2C^{3}}{b}\frac{r^{2}}{V(r)},
\end{equation*}%
whence (\ref{subcritical bound critical}) follows.
\end{proof}

\section{On-diagonal estimates at center}

\setcounter{equation}{0}\label{SecOndiag}In this section we prove Theorem %
\ref{main theorem}. In order to obtain the upper bound of $p\left(
t,o,o\right) $ on $M=M_{1}\#...\#M_{k}$, we use the integrated resolvent
introduced in the previous section. This idea of using the resolvent on a
connected sum goes back to Woess \cite[p.\ 96]{Woess} where it was used in
the setting of connected sums of graphs. Implementation in the present case
of manifolds requires much more technique, though.

\subsection{Estimates of integrated resolvent on connected sums}

From now on let $M=M_{1}\#M_{2}\#\cdots \#M_{k}$ be a connected sum of
parabolic manifolds $M_{1},\ldots ,M_{k}$ with a central part $K$. Let $A$
be a connected, precompact open subset of $M$ with smooth boundary and such
that $K\subset A$. In fact, we will need that $K_{\epsilon }\subset A$ for
large enough $\epsilon $. Set 
\begin{equation*}
\partial A_{i}:=\partial A\cap E_{i},\ \ \ 1\leq i\leq k
\end{equation*}%
so that $\partial A=\sqcup _{i}\partial A_{i}$ (see Fig. \ref{figure:
connectedsum2}). 
\begin{figure}[tbph]
\begin{center}
\scalebox{0.9}{
\includegraphics{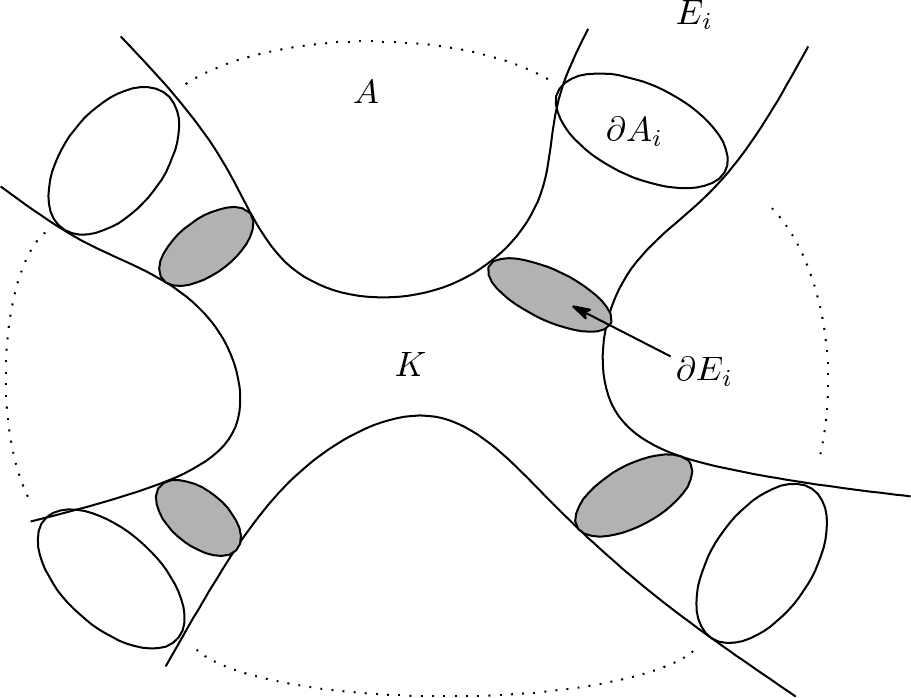} 
}
\end{center}
\caption{Sets $K$ and $A$ in the connected sum $M$.}
\label{figure: connectedsum2}
\end{figure}

\begin{lemma}
\label{Lemma estimate of G sum}There is a constant $h=h\left( A,K\right) >0$
such that, for any $\lambda >0$,%
\begin{equation}
h(\sup_{\partial K}\gamma _{\lambda })\sum_{i=1}^{k}\inf_{\partial
A_{i}}\Phi _{\lambda }^{E_{i}}\leq \sup_{\partial K}\gamma _{\lambda }^{A}.
\label{estimate of G sum (i)}
\end{equation}
\end{lemma}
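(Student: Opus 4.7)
The plan is to apply the strong Markov property of Brownian motion at $\tau_A$ and combine it with the one-end comparison given by Lemma~\ref{Lemga}(b), then extract a lower bound by evaluating at the point where $\gamma_\lambda$ attains its supremum on $\partial K$.

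Splitting the time integral in $\gamma_\lambda(x)=\mathbb{E}_x\int_0^\infty 1_K(X_t)e^{-\lambda t}\,dt$ at $\tau_A$ and applying the strong Markov property yields, for $x\in A$,
\begin{equation*}
\gamma_\lambda(x) = \gamma_\lambda^A(x) + \mathbb{E}_x\!\left[e^{-\lambda\tau_A}\gamma_\lambda(X_{\tau_A})\right],
\end{equation*}
and decomposing the expectation according to which component $\partial A_i$ contains $X_{\tau_A}$ turns it into a sum over ends. The key geometric observation is that for $y\in E_i$ Brownian motion exits $K^c$ exactly when it exits $E_i$ (because $\partial E_i$ is the component of $\partial K$ separating $E_i$ from the rest of $M$), so $\Phi_\lambda^{K^c}(y)=\Phi_\lambda^{E_i}(y)$; Lemma~\ref{Lemga}(b) then gives $\gamma_\lambda(y)\leq (\sup_{\partial K}\gamma_\lambda)(1-m_i)$ for all $y\in\partial A_i$, where $m_i:=\inf_{\partial A_i}\Phi_\lambda^{E_i}$. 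Setting $p_i(x):=\mathbb{E}_x[e^{-\lambda\tau_A}; X_{\tau_A}\in\partial A_i]$, so that $\sum_i p_i(x) = 1-\Phi_\lambda^A(x)$, substitution and regrouping produce the pointwise inequality
\begin{equation*}
\gamma_\lambda^A(x) \geq \gamma_\lambda(x) - (\sup_{\partial K}\gamma_\lambda)(1-\Phi_\lambda^A(x)) + (\sup_{\partial K}\gamma_\lambda)\sum_{i=1}^k m_i\,p_i(x).
\end{equation*}
Evaluating at $x^*\in\partial K$ with $\gamma_\lambda(x^*)=\sup_{\partial K}\gamma_\lambda$ makes the first two terms telescope to $(\sup_{\partial K}\gamma_\lambda)\Phi_\lambda^A(x^*)\geq 0$, whence
\begin{equation*}
\sup_{\partial K}\gamma_\lambda^A \geq (\sup_{\partial K}\gamma_\lambda)\Bigl[\Phi_\lambda^A(x^*)+\sum_{i=1}^k m_i\,p_i(x^*)\Bigr].
\end{equation*}

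The main obstacle is to show the bracket is at least $h\sum_i m_i$ for some $h=h(A,K)>0$ independent of $\lambda$. I would split into two regimes via a threshold $\lambda_0$. For $\lambda\geq\lambda_0$, a Gaussian-type bound $\mathbb{E}_x[e^{-\lambda\tau_A}]\leq Ce^{-c\sqrt{\lambda}\,\delta_0}$, with $\delta_0:=d(\partial K,\partial A)>0$ (positive since $K_\epsilon\subset A$), forces $\Phi_\lambda^A(x^*)\geq 1/2$ for $\lambda_0$ chosen large enough; combined with $\sum_i m_i\leq k$ this handles the range $\lambda\geq\lambda_0$ with $h\leq 1/(2k)$. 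For $\lambda\in(0,\lambda_0]$, each $p_i$ is a positive $\lambda$-harmonic function on $A$ with boundary value $1_{\partial A_i}$ and converges on compact subsets (as $\lambda\downarrow 0$) to the classical harmonic measure $\pi_i(x)=\mathbb{P}_x(X_{\tau_A}\in\partial A_i)$, which by the Harnack inequality is bounded below on the compact set $\partial K\subset\subset A$ by a positive geometric constant (using connectedness of $A$ and nontriviality of $\pi_i$). A continuity-in-$\lambda$ argument then yields $p_i(x)\geq c>0$ uniformly on $\partial K$ for all $\lambda\in(0,\lambda_0]$ and all $i$, hence $\sum_i m_i p_i(x^*)\geq c\sum_i m_i$. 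Taking $h=\min(c,1/(2k))$ completes the proof.
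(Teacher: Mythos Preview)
Your proof is correct in outline, though the final uniformity step is a bit roundabout. The paper's argument is structurally parallel but replaces your $\lambda$-dependent functions $p_i(x)=\mathbb{E}_x[e^{-\lambda\tau_A};X_{\tau_A}\in\partial A_i]$ by the \emph{harmonic} functions $h_i$ solving $\Delta h_i=0$ in $A$ with boundary data $1_{\partial A_i}$. Since each $h_i$ is $\lambda$-superharmonic, the comparison principle gives $\gamma_\lambda-\gamma_\lambda^A\leq\sum_i(\sup_{\partial A_i}\gamma_\lambda)h_i$ directly in $A$; and because $\sum_i h_i\equiv 1$ (rather than $1-\Phi_\lambda^A$), the cancellation at the maximum point $x^*$ is exact and no residual $\Phi_\lambda^A(x^*)$ term appears. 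One is left immediately with $h_i(x^*)\geq h:=\min_i\inf_{\partial K}h_i>0$, a purely geometric constant independent of $\lambda$. This bypasses your two-regime split entirely.

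Your strong-Markov route is the natural probabilistic version and actually produces a sharper pointwise inequality (since $p_i\leq h_i$), but the price is having to control $p_i$ uniformly in $\lambda$. A cleaner way to close your small-$\lambda$ case is to observe that $\lambda\mapsto p_i(x)$ is \emph{monotone decreasing} (since $e^{-\lambda\tau_A}$ is), so $p_i(x)\geq p_i(x)|_{\lambda=\lambda_0}$ for all $\lambda\in(0,\lambda_0]$; the right-hand side is a fixed positive $\lambda_0$-harmonic function on $A$, hence bounded below on the compact $\partial K$. This replaces the slightly soft ``continuity-in-$\lambda$'' argument, which as stated does not immediately give uniformity over the non-compact interval $(0,\lambda_0]$.
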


\begin{proof}
As it follows from (\ref{Gla}) and (\ref{GA}) the function 
\begin{equation*}
u:=\gamma _{\lambda }-\gamma _{\lambda }^{A}=G_{\lambda }1_{K}-G_{\lambda
}^{A}1_{K}
\end{equation*}%
is $\lambda $-harmonic in $A$. Consider the function $h_{i}$ in $A$ that
solves the Dirichlet problem%
\begin{equation*}
\left\{ 
\begin{array}{ll}
\Delta h_{i}=0 & \text{in }A \\ 
h_{i}=1_{\partial A_{i}} & \text{on }\partial A.%
\end{array}%
\right.
\end{equation*}%
Since on $\partial A_{i}$ we have%
\begin{equation*}
u\leq \sup_{\partial A_{i}}\gamma _{\lambda }=(\sup_{\partial A_{i}}\gamma
_{\lambda })h_{i},
\end{equation*}%
it follows that on $\partial A$%
\begin{equation}
u\leq \sum_{i=1}^{n}(\sup_{\partial A_{i}}\gamma _{\lambda })h_{i}.
\label{uh}
\end{equation}%
Since $h_{i}$ is $\lambda $-superharmonic in $A$, we conclude by the
comparison principle in $A$ that (\ref{uh}) holds in $A$. Let us also
observe that on $\partial A$ 
\begin{equation}
\sum_{i=1}^{k}h_{i}=1,  \label{h1}
\end{equation}%
which implies then that (\ref{h1}) holds in $A$.

Since in $E_{i}$ we have $\Phi _{\lambda }^{K^{c}}=\Phi _{\lambda }^{E_{i}}$%
, we obtain by Lemma \ref{Lemga}$\left( b\right) $ that in $E_{i}$%
\begin{equation*}
\gamma _{\lambda }\leq (\sup_{\partial K}\gamma _{\lambda })(1-\Phi
_{\lambda }^{E_{i}}),
\end{equation*}%
which implies%
\begin{equation*}
\sup_{\partial A_{i}}\gamma _{\lambda }\leq (\sup_{\partial K}\gamma
_{\lambda })\sup_{\partial A_{i}}(1-\Phi _{\lambda
}^{E_{i}})=(\sup_{\partial K}\gamma _{\lambda })(1-\inf_{\partial A_{i}}\Phi
_{\lambda }^{E_{i}}).
\end{equation*}%
Substituting into (\ref{uh}) and recalling the definition of $u$, we obtain
that on $A$%
\begin{equation}
\gamma _{\lambda }\leq \gamma _{\lambda }^{A}+(\sup_{\partial K}\gamma
_{\lambda })\sum_{i=1}^{k}(1-\inf_{\partial A_{i}}\Phi _{\lambda
}^{E_{i}})h_{i}.  \label{gaga}
\end{equation}%
Let $x\in \partial K$ be a point where $\gamma _{\lambda }$ attains its
maximum on $\partial K$. Considering (\ref{gaga}) at this point $x$ we obtain%
\begin{equation*}
\gamma _{\lambda }\left( x\right) \leq \gamma _{\lambda }^{A}\left( x\right)
+\gamma _{\lambda }\left( x\right) \sum_{i=1}^{k}(1-\inf_{\partial
A_{i}}\Phi _{\lambda }^{E_{i}})h_{i}\left( x\right) ,
\end{equation*}%
whence by (\ref{h1})%
\begin{equation*}
\gamma _{\lambda }\left( x\right) \sum_{i=1}^{k}(\inf_{\partial A_{i}}\Phi
_{\lambda }^{E_{i}})h_{i}\left( x\right) \leq \gamma _{\lambda }^{A}\left(
x\right) .
\end{equation*}%
This implies (\ref{estimate of G sum (i)}) with $h:=\min_{i}\inf_{\partial
K}h_{i}>0$.
\end{proof}

\begin{lemma}
There exists a constant $h=h\left( A,K\right) >0$ such that 
\begin{equation}
h(\sup_{\partial K}\dot{\gamma}_{\lambda })\sum_{i=1}^{k}\inf_{\partial
A_{i}}\Phi _{\lambda }^{E_{i}}\leq \sup_{\partial K}\dot{\gamma}_{\lambda
}^{A}+(\sup_{\partial K}\gamma _{\lambda })\left( \sup_{\partial K}\Psi
_{\lambda }^{A}+\sum_{i=1}^{k}\sup_{\partial A_{i}}\Psi _{\lambda
}^{E_{i}}\right) .  \label{estimate of G sum (ii)}
\end{equation}
\end{lemma}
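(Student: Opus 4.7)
The plan is to imitate the proof of Lemma~\ref{Lemma estimate of G sum} with $\dot{\gamma}_\lambda$ in place of $\gamma_\lambda$, but carefully tracking the extra inhomogeneity created by the identity $(\Delta-\lambda)\dot{\gamma}_\lambda=-\gamma_\lambda$ (whereas $\gamma_\lambda$ itself is $\lambda$-harmonic outside $K$). This inhomogeneity is precisely the source of the $\Psi_\lambda$ terms on the right-hand side.

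First I would establish the end-wise inequality
\begin{equation*}
\dot{\gamma}_\lambda(x)\leq(\sup_{\partial K}\dot{\gamma}_\lambda)(1-\Phi_\lambda^{E_i}(x))+(\sup_{\partial K}\gamma_\lambda)\Psi_\lambda^{E_i}(x),\qquad x\in E_i.
\end{equation*}
The cleanest route is probabilistic: since $X_t\notin K$ for $t<\tau:=\tau_{E_i}$ and $\tau<\infty$ almost surely (by parabolicity of $M_i$ via the isometry $E_i\cong M_i\setminus K_i$), substituting $s=t-\tau$ in $\dot{\gamma}_\lambda(x)=\mathbb{E}_x\int_0^\infty t e^{-\lambda t}1_K(X_t)\,dt$ and applying the strong Markov property at $\tau$ gives
\begin{equation*}
\dot{\gamma}_\lambda(x)=\mathbb{E}_x\bigl[e^{-\lambda\tau}\dot{\gamma}_\lambda(X_\tau)\bigr]+\mathbb{E}_x\bigl[\tau e^{-\lambda\tau}\gamma_\lambda(X_\tau)\bigr].
\end{equation*}
Bounding $\dot{\gamma}_\lambda(X_\tau)$ and $\gamma_\lambda(X_\tau)$ by their suprema on $\partial K\supset\partial E_i$ and identifying $\mathbb{E}_x[e^{-\lambda\tau}]=1-\Phi_\lambda^{E_i}(x)$ together with $\mathbb{E}_x[\tau e^{-\lambda\tau}]=\Psi_\lambda^{E_i}(x)$ (Lemma~\ref{Lemma of Psi} applied with $\Omega=E_i$) finishes this step.

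Next I would bound $\dot{\gamma}_\lambda$ inside $A$ by comparing with $\dot{\gamma}_\lambda^A$. The difference $v:=\dot{\gamma}_\lambda-\dot{\gamma}_\lambda^A$ solves $(\Delta-\lambda)v=-(\gamma_\lambda-\gamma_\lambda^A)$ in $A$ with $v=\dot{\gamma}_\lambda$ on $\partial A$. Splitting $v=G_\lambda^A(\gamma_\lambda-\gamma_\lambda^A)+\tilde v$ where $\tilde v$ is the $\lambda$-harmonic extension of $\dot{\gamma}_\lambda|_{\partial A}$, the first piece is controlled by Lemma~\ref{Lemga}(a) combined with $\sup_{\partial A}\gamma_\lambda\leq\sup_{\partial K}\gamma_\lambda$ (from Lemma~\ref{Lemga}(b)), giving $G_\lambda^A(\gamma_\lambda-\gamma_\lambda^A)\leq(\sup_{\partial K}\gamma_\lambda)\Psi_\lambda^A$. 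For $\tilde v$, the $\lambda$-harmonic maximum principle together with the comparison $h_i^\lambda\leq h_i$ (valid because $h_i-h_i^\lambda$ is $\lambda$-superharmonic in $A$ with zero boundary values, where $h_i^\lambda$ denotes the $\lambda$-harmonic extension of $1_{\partial A_i}$) produces $\tilde v\leq\sum_i(\sup_{\partial A_i}\dot{\gamma}_\lambda)h_i$. Consequently, in $A$,
\begin{equation*}
\dot{\gamma}_\lambda\leq\dot{\gamma}_\lambda^A+(\sup_{\partial K}\gamma_\lambda)\Psi_\lambda^A+\sum_{i=1}^k(\sup_{\partial A_i}\dot{\gamma}_\lambda)h_i.
\end{equation*}

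Finally, I substitute the supremum over $\partial A_i$ of the end-wise inequality into the last display, evaluate at a point $x^*\in\partial K$ where $\dot{\gamma}_\lambda$ attains its maximum on $\partial K$, and use $\sum_ih_i(x^*)=1$ to cancel the coefficient $1$ of $\sup_{\partial K}\dot{\gamma}_\lambda$. This produces
\begin{equation*}
(\sup_{\partial K}\dot{\gamma}_\lambda)\sum_i(\inf_{\partial A_i}\Phi_\lambda^{E_i})h_i(x^*)\leq\dot{\gamma}_\lambda^A(x^*)+(\sup_{\partial K}\gamma_\lambda)\Bigl(\Psi_\lambda^A(x^*)+\sum_i(\sup_{\partial A_i}\Psi_\lambda^{E_i})h_i(x^*)\Bigr);
\end{equation*}
invoking the uniform lower bound $h_i(x^*)\geq h:=\min_i\inf_{\partial K}h_i>0$ on the left (exactly as in Lemma~\ref{Lemma estimate of G sum}), and $h_i(x^*)\leq 1$ together with $\dot{\gamma}_\lambda^A(x^*)\leq\sup_{\partial K}\dot{\gamma}_\lambda^A$ and $\Psi_\lambda^A(x^*)\leq\sup_{\partial K}\Psi_\lambda^A$ on the right, yields the claimed inequality. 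The main technical wrinkle is the $\tilde v$ bound in Step~2: the natural boundary Poisson extensions are $\lambda$-harmonic and therefore $\lambda$-dependent, while the target geometric constant $h$ must be $\lambda$-free, which is precisely what the elementary comparison $h_i^\lambda\leq h_i$ arranges.
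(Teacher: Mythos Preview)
Your proof is correct and follows the same architecture as the paper's: decompose $\dot{\gamma}_\lambda-\dot{\gamma}_\lambda^A$ in $A$ into a $G_\lambda^A$-piece (bounded via Lemma~\ref{Lemga} by $(\sup_{\partial K}\gamma_\lambda)\Psi_\lambda^A$) plus a $\lambda$-harmonic piece dominated by $\sum_i(\sup_{\partial A_i}\dot{\gamma}_\lambda)h_i$, then feed in an end-wise bound for $\dot{\gamma}_\lambda$ on each $E_i$, evaluate at the maximizing point on $\partial K$, and cancel via $\sum_i h_i\equiv 1$. The one substantive difference is in how you obtain the end-wise inequality $\dot{\gamma}_\lambda\leq(\sup_{\partial K}\dot{\gamma}_\lambda)(1-\Phi_\lambda^{E_i})+(\sup_{\partial K}\gamma_\lambda)\Psi_\lambda^{E_i}$: you derive it probabilistically via the strong Markov property at $\tau_{E_i}$ and the identifications $\mathbb{E}_x[e^{-\lambda\tau}]=1-\Phi_\lambda^{E_i}$, $\mathbb{E}_x[\tau e^{-\lambda\tau}]=\Psi_\lambda^{E_i}$, whereas the paper obtains it analytically by splitting $\dot{\gamma}_\lambda=(\dot{\gamma}_\lambda-G_\lambda^{E_i}\gamma_\lambda)+G_\lambda^{E_i}\gamma_\lambda$ and applying the comparison principle for minimal $\lambda$-harmonic functions to each piece. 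Both routes yield exactly the same inequality; yours is arguably cleaner and more transparent about where the $\Psi_\lambda^{E_i}$ term comes from, while the paper's stays entirely within the PDE/comparison framework already in use.
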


\begin{proof}
By (\ref{gadot}) and (\ref{GAdot}), the function 
\begin{equation*}
v:=\dot{\gamma}_{\lambda }-\dot{\gamma}_{\lambda }^{A}=G_{\lambda }\gamma
_{\lambda }-G_{\lambda }^{A}\gamma _{\lambda }^{A}
\end{equation*}%
solves in $A$ the following boundary value problem:%
\begin{equation*}
\left\{ 
\begin{array}{ll}
\Delta v-\lambda v=-\left( \gamma _{\lambda }-\gamma _{\lambda }^{A}\right)
& \text{in }A \\ 
v=\dot{\gamma}_{\lambda } & \text{on }\partial A.%
\end{array}%
\right.
\end{equation*}%
Consider also function $w$ that solves the problem%
\begin{equation*}
\left\{ 
\begin{array}{ll}
\Delta w-\lambda w=0 & \text{in }A \\ 
w=\dot{\gamma}_{\lambda } & \text{on }\partial A.%
\end{array}%
\right.
\end{equation*}%
Then we have%
\begin{equation}
v=G_{\lambda }^{A}\left( \gamma _{\lambda }-\gamma _{\lambda }^{A}\right) +w.
\label{vw}
\end{equation}%
Using the estimate (\ref{gaA}) of Lemma \ref{Lemga}$\left( a\right) $ and (%
\ref{Psidef}), we obtain that in $A$ 
\begin{equation}
G_{\lambda }^{A}\left( \gamma _{\lambda }-\gamma _{\lambda }^{A}\right) \leq
(\sup_{\partial A}\gamma _{\lambda })G_{\lambda }^{A}\left( 1-\Phi _{\lambda
}^{A}\right) \ =(\sup_{\partial A}\gamma _{\lambda })\Psi _{\lambda }^{A}.
\label{Gaga1}
\end{equation}%
Observe that%
\begin{equation*}
\gamma _{\lambda }\leq \sup_{\partial K}\gamma _{\lambda }\ \text{in\ }K^{c}
\end{equation*}%
because the constant function $\sup_{\partial K}\gamma _{\lambda }$ is $%
\lambda $-superharmonic in $K^{c}$, while $\gamma _{\lambda }$ is minimal $%
\lambda $-harmonic that is bounded by $\sup_{\partial K}\gamma _{\lambda }$
on $\partial K^{c}$. Hence, we obtain from (\ref{Gaga1}) that%
\begin{equation}
G_{\lambda }^{A}\left( \gamma _{\lambda }-\gamma _{\lambda }^{A}\right) \leq
(\sup_{\partial K}\gamma _{\lambda })\Psi _{\lambda }^{A}\ \ \text{in }A.
\label{Gaga}
\end{equation}%
In order to estimate $w$, let us represent this function in the form%
\begin{equation*}
w=\sum_{i=1}^{k}w_{i},
\end{equation*}%
where $w_{i}$ solves the Dirichlet problem%
\begin{equation*}
\left\{ 
\begin{array}{ll}
\Delta w_{i}-\lambda w_{i}=0 & \text{in }A \\ 
w_{i}=\dot{\gamma}_{\lambda }1_{\partial A_{i}} & \text{on }\partial A.%
\end{array}%
\right.
\end{equation*}%
Let $h_{i}$ be the same as in the proof of Lemma \ref{Lemma estimate of G
sum}. By the comparison principle, we have that in $A$%
\begin{equation}
w_{i}\leq (\sup_{\partial A_{i}}\dot{\gamma}_{\lambda })h_{i}.  \label{wi}
\end{equation}%
Let us prove further that%
\begin{equation}
\dot{\gamma}_{\lambda }-G_{\lambda }^{E_{i}}\gamma _{\lambda }\leq
(\sup_{\partial E_{i}}\dot{\gamma}_{\lambda })(1-\Phi _{\lambda }^{E_{i}})\
\ \text{in }E_{i}.  \label{Ei}
\end{equation}%
Indeed, by (\ref{gadot}), the function 
\begin{equation*}
\dot{\gamma}_{\lambda }-G_{\lambda }^{E_{i}}\gamma _{\lambda }=G_{\lambda
}\gamma _{\lambda }-G_{\lambda }^{E_{i}}\gamma _{\lambda }
\end{equation*}%
is non-negative, $\lambda $-harmonic, and minimal in $E_{i}$. Besides, it is
bounded by $\sup_{\partial E_{i}}\dot{\gamma}_{\lambda }$ on $\partial E_{i}$%
. The function $1-\Phi _{\lambda }^{E_{i}}$ is non-negative and $\lambda $%
-harmonic in $E_{i}$, and is equal to $1$ on $\partial E_{i}$. The estimate (%
\ref{Ei}) follows by the comparison principle in $E_{i}$.

Similarly, we have%
\begin{equation*}
\gamma _{\lambda }\leq (\sup_{\partial E_{i}}\gamma _{\lambda })(1-\Phi
_{\lambda }^{E_{i}})\ \text{in }E_{i},
\end{equation*}%
because $\gamma _{\lambda }$ is non-negative, $\lambda $-harmonic and
minimal in $E_{i}$, and is bounded by $\sup_{\partial E_{i}}\gamma _{\lambda
}$ on $\partial E_{i}$. It follows that in $E_{i}$%
\begin{equation*}
G_{\lambda }^{E_{i}}\gamma _{\lambda }\leq (\sup_{\partial E_{i}}\gamma
_{\lambda })G_{\lambda }^{E_i}(1-\Phi _{\lambda }^{E_{i}})=(\sup_{\partial
E_{i}}\gamma _{\lambda })\Psi _{\lambda }^{E_{i}}.
\end{equation*}%
Combining with (\ref{Ei}), we obtain that in $E_{i}$%
\begin{equation*}
\dot{\gamma}_{\lambda }\leq (\sup_{\partial E_{i}}\dot{\gamma}_{\lambda
})(1-\Phi _{\lambda }^{E_{i}})+(\sup_{\partial E_{i}}\gamma _{\lambda })\Psi
_{\lambda }^{E_{i}}.
\end{equation*}%
Substituting into (\ref{wi}), we obtain that in $A$%
\begin{equation*}
w\leq \sum_{i=1}^{k}(\sup_{\partial A_{i}}\dot{\gamma}_{\lambda })h_{i}\leq
\sum_{i=1}^{k}\left( (\sup_{\partial E_{i}}\dot{\gamma}_{\lambda
})(1-\inf_{\partial A_{i}}\Phi _{\lambda }^{E_{i}})+(\sup_{\partial
E_{i}}\gamma _{\lambda })\sup_{\partial A_{i}}\Psi _{\lambda
}^{E_{i}}\right) h_{i}.
\end{equation*}%
Combining with (\ref{vw}) and (\ref{Gaga}), we obtain the following estimate
of the function $v=\dot{\gamma}_{\lambda }-\dot{\gamma}_{\lambda }^{A}$ in $%
A $:%
\begin{equation*}
\dot{\gamma}_{\lambda }-\dot{\gamma}_{\lambda }^{A}\leq (\sup_{\partial
K}\gamma _{\lambda })\Psi _{\lambda }^{A}+\sum_{i=1}^{k}\left(
(\sup_{\partial E_{i}}\dot{\gamma}_{\lambda })(1-\inf_{\partial A_{i}}\Phi
_{\lambda }^{E_{i}})+(\sup_{\partial E_{i}}\gamma _{\lambda })\sup_{\partial
A_{i}}\Psi _{\lambda }^{E_{i}}\right) h_{i}.
\end{equation*}%
Let $x$ be a point of maximum of $\dot{\gamma}_{\lambda }$ on $\partial K$.
It follows that%
\begin{equation*}
\dot{\gamma}_{\lambda }\left( x\right) \leq \dot{\gamma}_{\lambda
}^{A}\left( x\right) +(\sup_{\partial K}\gamma _{\lambda })\Psi _{\lambda
}^{A}\left( x\right) +\sum_{i=1}^{k}\left( \dot{\gamma}_{\lambda }\left(
x\right) (1-\inf_{\partial A_{i}}\Phi _{\lambda }^{E_{i}})+(\sup_{\partial
E_{i}}\gamma _{\lambda })\sup_{\partial A_{i}}\Psi _{\lambda
}^{E_{i}}\right) h_{i}\left( x\right) .
\end{equation*}%
Since $\sum h_{i}\equiv 1$, we see that $\dot{\gamma}_{\lambda }\left(
x\right) $ cancels out in the both sides, and we obtain 
\begin{equation*}
\dot{\gamma}_{\lambda }\left( x\right) \sum_{i=1}^{k}(\inf_{\partial
A_{i}}\Phi _{\lambda }^{E_{i}})h_{i}\left( x\right) \leq \dot{\gamma}%
_{\lambda }^{A}\left( x\right) +(\sup_{\partial K}\gamma _{\lambda })\Psi
_{\lambda }^{A}\left( x\right) +\sum_{i=1}^{k}(\sup_{\partial E_{i}}\gamma
_{\lambda })(\sup_{\partial A_{i}}\Psi _{\lambda }^{E_{i}})h_{i}\left(
x\right) .
\end{equation*}%
Since $h\leq h_{i}\left( x\right) \leq 1$ where $h:=\min_{i}\inf_{K}h_{i}>0$%
, we obtain from here (\ref{estimate of G sum (ii)}).
\end{proof}

\subsection{Proof of Theorem \protect\ref{main theorem}: Upper bound}

\label{SecUpper} As in the statement of Theorem \ref{main theorem}, let $M$
be a connected sum of parabolic manifolds $M_{1},\ldots ,M_{k}$, where all $%
M_{i}$, $i=1,\ldots ,k$ satisfy $($\ref{LY type}$)$ and $\left( RCA\right) $%
. Let $V_{i}(r)=V_{i}\left( o_{i},r\right) $ be the volume function on $%
M_{i} $ at $o_{i}\in K_{i}=M_{i}\setminus E_{i}$. We also assume that every $%
V_i(r)$ is either critical or subcritical, that is, condition (d) of Section %
\ref{notion}. Let $V(r)=V\left(o,r\right) $ be the volume function on $M$ at
a reference point $o\in K$.

It suffices to prove the main estimate (\ref{ptoo}) for large enough $t$
because for small $t$ we have $p(t,o,o)\asymp t^{-N/2}$ and $V(\sqrt{t}%
)\asymp t^{N/2}$.

Fix a connected precompact open set $A$ with smooth boundary such that $%
A\supset K_{\epsilon }$ for large enough $\epsilon >0$ as in Lemmas \ref%
{lemma estimate of important integral} and \ref{Lemma derivative RHS}
applied to all ends $M_{i}$.

Recall that the integrated resolvent $\gamma _{\lambda }$ is defined by (\ref%
{Gla}). By Lemmas \ref{Lemma iii} and \ref{Lemma estimate of G sum}, we
have, for any $\lambda >0$ and any $i=1,...,k$ 
\begin{equation}
\sup_{\partial K}\gamma _{\lambda }\leq \frac{C}{\inf_{\partial A_{i}}\Phi
_{\lambda }^{E_{i}}},  \label{estimate of G sum}
\end{equation}%
where $C=C\left( K,A\right) $.

Assume first that all manifolds $M_{i}$ are subcritical. Applying (\ref%
{resolvent estimate (i)}) on each end $M_{i}$ we obtain that%
\begin{equation*}
\inf_{\partial A_{i}}\Phi _{\lambda }^{E_{i}}\geq c\lambda V_{i}(\frac{1}{%
\sqrt{\lambda }})
\end{equation*}%
provided $\lambda \leq \lambda _{0}=\lambda _{0}\left( A\right) $.
Substituting into (\ref{estimate of G sum}), we obtain that, for $\lambda
\leq \lambda _{0}$, 
\begin{equation*}
\sup_{\partial K}\gamma _{\lambda }\leq \frac{C}{\lambda V_{\max }(\frac{1}{%
\sqrt{\lambda }})},
\end{equation*}%
where $V_{\max }(r)=\max_{1\leq i\leq k}V_{i}(r)$. By Lemma \ref{Lemma of G}$%
\left( i\right) $, we conclude that, for all $t\geq t_{0}=t_{0}\left(
\lambda _{0}\right) $, 
\begin{equation}
p(t,o,o)\leq \frac{C}{V_{\max }(\sqrt{t})}  \label{on-diagonal subcritical}
\end{equation}%
which proves the on-diagonal upper bound in (\ref{ptoo}) in the subcritical
case.

Assume now that there exists at least one critical end. Let it be $M_{j}$.
Applying (\ref{resolvent estimate (ii)}) in $M_{j}$, we have%
\begin{equation}
\inf_{\partial A}\Phi _{\lambda }^{E_{j}}\geq \frac{c}{\log \frac{1}{\lambda 
}},  \label{Ej}
\end{equation}
which together with (\ref{estimate of G sum}) yields, for all $\lambda \leq
\lambda _{0}$,%
\begin{equation}
\sup_{\partial K}\gamma _{\lambda }\leq C\log \frac{1}{\lambda }.
\label{estimate of G critical sum}
\end{equation}%
However, as we have pointed out before, in order to obtain upper bound in (%
\ref{ptoo}) in the critical case, we need some additional argument about $%
\dot{\gamma}_{\lambda }$.

For that, let us use the estimate (\ref{estimate of G sum (ii)}) of $%
\sup_{\partial K}\dot{\gamma}_{\lambda }$. Substituting into (\ref{estimate
of G sum (ii)}) the estimates (\ref{estimate of G^A1}) and (\ref{estimate of
R_A^1}), we obtain%
\begin{equation*}
(\sup_{\partial K}\dot{\gamma}_{\lambda })\inf_{\partial A_{j}}\Phi
_{\lambda }^{E_{j}}\leq C+C\sup_{\partial K}\gamma _{\lambda }\left(
1+\sum_{i=1}^{k}\sup_{\partial A_{i}}\Psi _{\lambda }^{E_{i}}\right) .
\end{equation*}%
Substituting here (\ref{Ej}), (\ref{estimate of G critical sum}), (\ref%
{estimate of R_K^1}), we obtain, for all $\lambda \leq \lambda _{0}$, 
\begin{equation*}
\sup_{\partial K}\dot{\gamma}_{\lambda }\frac{1}{\log \frac{1}{\lambda }}%
\leq C+C\log \frac{1}{\lambda }\left( 1+\frac{1}{\lambda \log ^{2}\frac{1}{%
\lambda }}\right) \leq \frac{C^{\prime }}{\lambda \log \frac{1}{\lambda }},
\end{equation*}%
which implies%
\begin{equation*}
\sup_{\partial K}\dot{\gamma}_{\lambda }\leq \frac{C}{\lambda }\text{ for
all }\lambda \leq \lambda _{0}.
\end{equation*}%
By Lemma \ref{Lemma of G} $\left( ii\right) $, we conclude that%
\begin{equation}
p(t,o,o)\leq \frac{C}{t}\quad \text{for all }t\geq t_{0}
\label{on-diagonal critical}
\end{equation}%
which finishes the proof of the upper bound in (\ref{ptoo}) in the critical
case.

\subsection{Proof of Theorem \protect\ref{main theorem}: Lower bound}

\label{lower bound} Let $M$ be a connected sum satisfying the assumption of
Theorem \ref{main theorem}. Let us observe that%
\begin{equation}
V(r)\approx V_{1}(r)+V_{2}(r)+\cdots +V_{k}(r)\approx V_{\max }(r)
\label{VVmax}
\end{equation}%
for all $r>0$. By (\ref{on-diagonal subcritical}) and (\ref{on-diagonal
critical}), we obtain that, for all $t>0,$ 
\begin{equation}
p(t,o,o)\leq \frac{C}{V(\sqrt{t})}.  \label{on-diagonal upper bound}
\end{equation}%
Since each $V_{i}\left( r\right) $ satisfies the doubling condition, so does 
$V\left( r\right) $ by (\ref{VVmax}). By \cite[Theorem 7.2]%
{Coulhon-Grigoryan}, the upper bound (\ref{on-diagonal upper bound})
together with the doubling property of $V\left( r\right) $ implies the
matching lower bound%
\begin{equation*}
p(t,o,o)\geq \frac{c}{V(\sqrt{t})}.
\end{equation*}%
Replacing here $V$ by $V_{\max }$, we finish the proof of the lower bound in
(\ref{ptoo}) and, hence, the proof of Theorem \ref{main theorem}.

\section{Off-diagonal estimates}

\setcounter{equation}{0}\label{SecOff}In this section, we prove Theorems \ref%
{T1}-\ref{T3} by combining Theorem \ref{main theorem} with some results from 
\cite{G-SC Dirichlet}, \cite{G-SC hitting} and \cite{G-SC ends}.

For any open set $\Omega $ in any weighted manifold $M$, define the \textit{%
exit probability function} in $\Omega $: for all $x\in \Omega $ and $t>0$,%
\begin{equation*}
\psi _{\Omega }\left( y,t\right) =\mathbb{P}_{x}(\tau _{\Omega }\leq t).
\end{equation*}%
Equivalently, $\psi _{\Omega }\left( x,t\right) $ is the minimal
non-negative solution of the heat equation $\partial _{t}u=\Delta u$ in $%
\Omega \times \mathbb{R}_{+}$ with the initial condition $u|_{t=0}=0$ and
the boundary condition $u|_{\partial \Omega }=1$.

We will use the abstract upper and lower off-diagonal estimates of \cite[%
Theorem 3.5]{G-SC ends} for the heat kernel $p\left( t,x,y\right) $ on an
arbitrary manifold $M$ for $x\in A$ and $y\in B$ where $A,B$ are open
subsets of $M$ such either $\overline{A}$ and $\overline{B}$ are disjoint or 
$\overline{B}\subset A$. These estimates use the exit probabilities $\psi
_{A}\left( x,t\right) $\ and $\psi _{B}\left( y,t\right) $ and their time
derivatives. Besides, they use the quantities%
\begin{equation*}
P^{+}\left( t\right) =\sup_{s\in \left[ t/4,t\right] }\sup_{z_{1}\in
\partial A,\ z_{2}\in \partial B}p\left( s,z_{1},z_{2}\right) \text{ \ and \ 
}P^{-}\left( t\right) =\inf_{s\in \left[ t/4,t\right] }\inf_{z_{1}\in
\partial A,\ z_{2}\in \partial B}p\left( s,z_{1},z_{2}\right)
\end{equation*}%
and%
\begin{equation*}
G^{+}\left( t\right) =\int_{0}^{t}\sup_{z_{1}\in \partial A,\ z_{2}\in
\partial B}p\left( s,z_{1},z_{2}\right) ds\ \text{and}\ G^{-}\left( t\right)
=\int_{0}^{t}\inf_{z_{1}\in \partial A,\ z_{2}\in \partial B}p\left(
s,z_{1},z_{2}\right) ds.
\end{equation*}%
With these notations, the estimates of \cite[Theorem 3.5]{G-SC ends} read as
follows: for all $x\in A,y\in B$ and $t>0$,%
\begin{eqnarray}
p(t,x,y) &\approx &p_{A}\left( t,x,y\right) +P^{\pm }\left( t\right) \mathbb{%
\psi }_{A}\left( x,\tilde{t}\right) \mathbb{\psi }_{B}\left( y,\tilde{t}%
\right)  \notag \\
&&+G^{\pm }\left( \tilde{t}\right) \left[ \partial _{t}\mathbb{\psi }%
_{A}\left( x,\xi \right) \psi _{B}\left( y,\tilde{t}\right) +\partial
_{t}\psi _{B}\left( y,\zeta \right) \mathbb{\psi }_{A}\left( x,\tilde{t}%
\right) \right] ,\,  \label{general full estimate}
\end{eqnarray}%
where the index \textquotedblleft $+$\textquotedblright\ is used for the
upper bound, \textquotedblleft $-$\textquotedblright\ is used for the lower
bound, $\tilde{t}=t$ for the upper bound, $\tilde{t}=\frac{1}{4}t$ for the
lower bound, $\xi $ and $\zeta $ are some values from $\left[ t/4,t\right] $
that may be different for upper and lower bounds.

\begin{proof}[Proof of Theorem \protect\ref{T1}]
Recall that $M$ is a connected sum of $M_{1},\ldots ,M_{k}$ with a central
part $K$, where each $M_{i}$ satisfies conditions $\left( a\right) $-$\left(
d\right) $ in Subsection \ref{notion}. We apply (\ref{general full estimate}%
) with $A=E_{i}$ and $B=E_{j}$ where $i\neq j$. Since $A$ and $B$ are
disjoint, we have $p_{A}\left( t,x,y\right) =0$ for all $x\in A$ and $y\in B$%
.

Note that, for all $z_{1}\in \partial E_{i}$ and $z_{2}\in \partial E_{j}$,
the distance $d\left( z_{1},z_{2}\right) $ is bounded from above and below
by positive constants. Therefore, assuming $t>1$, we obtain by the local
Harnack inequality and Theorem \ref{main theorem} that%
\begin{equation}
P^{\pm }\left( t\right) \asymp Cp\left( ct,o,o\right) \approx \frac{1}{%
V\left( \sqrt{t}\right) }.  \label{Ppm}
\end{equation}%
Let us estimate similarly $G^{\pm }\left( t\right) $. Assuming $t>1$, we can
split the integrals in the definition of $G^{\pm }\left( t\right) $ into the
sum of two integrals: over $(0,1]$ and over $(1,t]$. The first integral is
bounded, while in the second integral we can apply the local Harnack
inequality to the heat kernel and, hence, replace $z_{1},z_{2}$ by $o$.
Using further the estimate (\ref{ptoo}) of Theorem \ref{main theorem}, we
obtain that, for large $t$,%
\begin{equation}
G^{\pm }\left( t\right) \approx \int_{1}^{t}\frac{1}{V\left( \sqrt{s}\right) 
}ds.  \label{Gint}
\end{equation}%
If all ends are subcritical, then by (\ref{subcritical}) we have, for large $%
t$, 
\begin{equation*}
\int_{1}^{t}\frac{ds}{V(\sqrt{s})}\leq \frac{Ct}{V(\sqrt{t})}.
\end{equation*}%
Since also%
\begin{equation*}
\int_{1}^{t}\frac{ds}{V(\sqrt{s})}\geq \int_{t/2}^{t}\frac{ds}{V(\sqrt{s})}%
\geq \frac{t}{2V(\sqrt{t})},
\end{equation*}%
we obtain that 
\begin{equation}
G^{\pm }\left( \tilde{t}\right) \approx \frac{t}{V(\sqrt{t})}.
\label{central integral subcritical}
\end{equation}%
If there exists at least one critical end, then $V\left( \sqrt{t}\right)
\approx t$, and (\ref{Gint}) implies, for large $t$,%
\begin{equation}
G^{\pm }\left( \tilde{t}\right) \approx \log t.
\label{central integral critical}
\end{equation}%
Note that the exit probability $\psi _{i}\left( x,t\right) $ depends only on
the intrinsic geometry of $E_{i}$. Since each $M_{i}$ satisfies $($\ref{LY
type}$)$ and $\left( RCA\right) $, we can use the results of \cite[Theorem
4.6]{G-SC hitting} that gives the following: for all $x\in E_{i}$ with large
enough $\left\vert x\right\vert $,%
\begin{equation}
\psi _{E_{i}}\left( x,t\right) \asymp \left\{ 
\begin{array}{ll}
\frac{C\left\vert x\right\vert ^{2}\exp \left( -b\left\vert x\right\vert
^{2}/t\right) }{V_i\left( \left\vert x\right\vert \right) H\left( \left\vert
x\right\vert \right) } & t<2\left\vert x\right\vert ^{2}, \\ 
\frac{C}{H\left( \sqrt{t}\right) }\int_{\left\vert x\right\vert }^{\sqrt{t}}%
\frac{sds}{V_i\left( s\right) }, & t\geq 2\left\vert x\right\vert ^{2}%
\end{array}%
\right.  \label{psiEi}
\end{equation}%
and, for large enough $\left\vert x\right\vert $ and $t$, 
\begin{equation}
\partial _{t}\mathbb{\psi }_{E_{i}}\left( x,t\right) \asymp \frac{CH\left(
\left\vert x\right\vert \right) \exp \left( -b\left\vert x\right\vert
^{2}/t\right) }{V_{i}\left( \sqrt{t}\right) \left( H\left( \left\vert
x\right\vert \right) +H\left( \sqrt{t}\right) \right) H\left( \sqrt{t}%
\right) },  \label{PsiEider}
\end{equation}%
where $H$ is the function defined in (\ref{H}). Note that in the case of
bounded $\left\vert x\right\vert $ the estimate (\ref{PsiEider}) matches the
estimate (\ref{Hest}) used in the proof of Lemma \ref{Lemma derivative RHS}.

If $M_{i}$ is subcritical then $H\left( r\right) \approx r^{2}/V_{i}\left(
r\right) $. Substituting this into then (\ref{psiEi}) and (\ref{PsiEider}),
we obtain, for all large enough $t$ and $\left\vert x\right\vert $,%
\begin{align}
\mathbb{\psi }_{E_{i}}\left( x,t\right) \asymp & Ce^{-b\frac{\left\vert
x\right\vert ^{2}}{t}},  \label{hitting subcritical} \\
\partial _{t}\mathbb{\psi }_{E_{i}}\left( x,t\right) \asymp & \frac{C}{t}%
D(x,t)e^{-b\frac{\left\vert x\right\vert ^{2}}{t}},
\label{hitting derivative subcritical}
\end{align}%
where $D$ is defined in (\ref{function D}).

If $M_{i}$ is critical then $H\left( r\right) \approx \log r$ which yields%
\begin{align}
\mathbb{\psi }_{E_{i}}\left( x,t\right) \asymp & CU(x,t)e^{-b\frac{%
\left\vert x\right\vert ^{2}}{t}},  \label{hitting critical} \\
\partial _{t}\mathbb{\psi }_{E_{i}}\left( x,t\right) \asymp & \frac{C}{t\log
t}W(x,t)e^{-b\frac{\left\vert x\right\vert ^{2}}{t}},
\label{hitting derivative critical}
\end{align}%
where $U$ is defined in (\ref{function U}) and $W$ is defined in (\ref%
{function W}).

Now we are in position to verify all the heat kernel estimates claimed in
Theorem \ref{T1} for $x\in E_{i},y\in E_{j}$ with $i\neq j$. It suffices to
prove all the estimates for large enough $\,\left\vert x\right\vert
,\left\vert y\right\vert $ and $t$. Then the estimates for all $x\in E_{i}$
and $y\in E_{j}$ (while $t$ is still large enough) follow by application of
the local Harnack inequality.

$\left( i\right) $ If all ends are subcritical, then (\ref{general full
estimate}), (\ref{Ppm}), (\ref{central integral subcritical}), (\ref{hitting
subcritical}), (\ref{hitting derivative subcritical}) yield:%
\begin{equation*}
p(t,x,y)\asymp \frac{C}{V(\sqrt{t})}\left[ 1+D(x,t)+D(y,t)\right] e^{-b\frac{%
\left\vert x\right\vert ^{2}+\left\vert y\right\vert ^{2}}{t}}.\,
\end{equation*}%
Observing that that by (\ref{function D}) $D\left( x,t\right) $ is bounded
and that%
\begin{equation*}
\left\vert x\right\vert ^{2}+\left\vert y\right\vert ^{2}\approx d^{2}\left(
x,y\right)
\end{equation*}%
we obtain (\ref{T1i}).

$\left( ii\right) $ Now let at least one of the ends be critical, so that $%
V\left( r\right) \approx r^{2}$.

$\left( ii\right) _{1}$ Let $M_{i},M_{j}$ are subcritical, then (\ref%
{general full estimate}), (\ref{Ppm}), (\ref{central integral critical}), (%
\ref{hitting subcritical}), (\ref{hitting derivative subcritical}) yield:%
\begin{equation*}
p(t,x,y)\asymp \frac{C}{t}\left( 1+\left( D(x,t)+D(y,t)\right) \log t\right)
e^{-b\frac{\left\vert x\right\vert ^{2}+\left\vert y\right\vert ^{2}}{t}},\,
\end{equation*}%
which proves (\ref{T1ii1}).

$\left( ii\right) _{2}$ Let both $M_{i}$ and $M_{j}$ be critical. Then we
obtain from (\ref{general full estimate}), (\ref{Ppm}), (\ref{central
integral critical}), (\ref{hitting critical}), (\ref{hitting derivative
critical}) that%
\begin{equation*}
p(t,x,y)\asymp \frac{C}{t}\left[ U(x,t)U(y,t)+W(x,t)U\left( y,t\right)
+U(x,t)W\left( y,t\right) \right] e^{-b\frac{\left\vert x\right\vert
^{2}+\left\vert y\right\vert ^{2}}{t}},\,
\end{equation*}%
that is, (\ref{T1ii2}).

$\left( ii\right) _{3}$ Let $M_{i}$ be subcritical and $M_{j}$ be critical.
Then we obtain similarly%
\begin{equation*}
p(t,x,y)\asymp \frac{C}{t}\left[ U\left( x,t\right) +D(x,t)U\left(
y,t\right) \log t+W(x,t)\right] e^{-b\frac{\left\vert x\right\vert
^{2}+\left\vert y\right\vert ^{2}}{t}}.
\end{equation*}%
By (\ref{U+W}) we can replace here $U+W$ by $1$, which yields (\ref{T1ii3}).
\end{proof}

For the proof of Theorems \ref{T2} and \ref{T3}, we will use again the
estimate (\ref{general full estimate}) but this time we take $A=E_{i}$ and $%
B=E_{i}^{\prime }$ where $E_{i}^{\prime }=E_{i}\setminus K^{\prime }$ and $%
K^{\prime }$ is a closed $\epsilon $-neighborhood of $K$ for large enough $%
\epsilon $. In this case we have $\overline{B}\subset A$.

Note that, for all $z_{1}\in \partial E_{i}$ and $z_{2}\in \partial
E_{i}^{\prime }$, the distance $d\left( z_{1},z_{2}\right) $ is bounded from
above and below by positive constants. Hence, arguing as above, we obtain
the same estimates of $P^{\pm }\left( t\right) ,G^{\pm }\left( t\right) $ as
stated in the proof of Theorem \ref{T1}. The estimates of $\psi _{E_{i}}$
and $\partial _{t}\psi _{E_{i}}$ also remain the same. Clearly, $\psi
_{E_{i}^{\prime }}$ and $\partial _{t}\psi _{E_{i}^{\prime }}$ satisfy
similar estimates.

To handle the term $p_{A}\left( t,x,y\right) =p_{E_{i}}(t,x,y)$ in (\ref%
{general full estimate}), we use the result of \cite[Theorem 4.9]{G-SC
Dirichlet} that says the following: for all $t>0$ and all $x,y\in E_{i}$
with large $\left\vert x\right\vert ,\left\vert y\right\vert $, 
\begin{equation*}
p_{E_{i}}(t,x,y)\asymp \frac{C}{V_{i}(x,\sqrt{t})}\left( \frac{H(\left\vert
x\right\vert )}{H(\left\vert x\right\vert )+H(\sqrt{t})}\right) \left( \frac{%
H(\left\vert y\right\vert )}{H(\left\vert y\right\vert )+H(\sqrt{t})}\right)
e^{-b\frac{d^{2}}{t}},
\end{equation*}%
where $d=d\left( x,y\right) $. If $M_{i}$ is subcritical, then $H(r)\approx
r^{2}/V(r)$, which gives 
\begin{equation}
p_{E_{i}}(t,x,y)\asymp C\frac{D(x,t)D(y,t)}{V_{i}(x,\sqrt{t})}e^{-b\frac{%
d^{2}}{t}}.  \label{Dirichlet subcritical}
\end{equation}%
If $M_{i}$ is critical, then $H(r)\approx \log r$, which gives 
\begin{equation}
p_{E_{i}}(t,x,y)\asymp C\frac{W(x,t)W(y,t)}{V_{i}(x,\sqrt{t})}e^{-b\frac{%
d^{2}}{t}}.  \label{Dirichlet critical}
\end{equation}

For the proof of Theorems \ref{T2} and \ref{T3} we need the following lemma.

\begin{lemma}
\label{lemma e} For all $x,y\in E_{i}$ and $\sqrt{t}\geq \min (|x|,|y|)$ we
have%
\begin{equation}
Ce^{-b\frac{|x|^{2}+|y|^{2}}{t}}\asymp C^{\prime }e^{-b^{\prime }\frac{%
d^{2}(x,y)}{t}}.  \label{equivalence of e}
\end{equation}%
Moreover, if $\sqrt{t}\geq \left\vert x\right\vert $ then%
\begin{equation}
\frac{C}{V_{i}\left( x,\sqrt{t}\right) }e^{-b\frac{d^{2}(x,y)}{t}}\asymp 
\frac{C^{\prime }}{V_{i}\left( \sqrt{t}\right) }e^{-b^{\prime }\frac{%
d^{2}(x,y)}{t}}  \label{Vixo}
\end{equation}
\end{lemma}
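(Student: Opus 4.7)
The plan is to deduce both claims from elementary geometric facts together with volume doubling on each end $M_i$. We may assume $t \geq t_0$ large, as the small-$t$ regime is absorbed into the constants.

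For (\ref{equivalence of e}), by the symmetry of the statement we assume $|x| \leq \sqrt{t}$. Two basic estimates on $M$ are needed: routing a path through the central part $K$ gives
\begin{equation*}
d(x,y) \leq |x|+|y|+C_0, \qquad C_0 := \mathrm{diam}(K)+2e,
\end{equation*}
and the converse triangle inequality applied to $K$ yields $d(x,y) \geq \bigl||x|-|y|\bigr|$. The upper direction is immediate: $d^2(x,y) \leq 3(|x|^2+|y|^2)+3C_0^2$, so $e^{-b' d^2/t} \geq c_1\, e^{-3b'(|x|^2+|y|^2)/t}$ once $b'$ is chosen small (the factor $e^{-3b' C_0^2/t}$ is bounded below for $t \geq t_0$). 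For the reverse direction I split on the size of $|y|$. If $|y| \geq 2\sqrt{t}$, then $d(x,y) \geq |y|-|x| \geq |y|/2$, and combined with $|x|^2 \leq t$ this gives
\begin{equation*}
|x|^2+|y|^2 \leq t + 4\,d^2(x,y),
\end{equation*}
whence $e^{-b(|x|^2+|y|^2)/t} \geq e^{-b}\, e^{-4b\, d^2(x,y)/t}$. If $|y| \leq 2\sqrt{t}$, then $|x|^2+|y|^2 \leq 5t$ and $d(x,y) \leq 3\sqrt{t}+C_0$, so both exponentials lie between fixed positive constants. Rescaling $b,b'$ yields (\ref{equivalence of e}).

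For (\ref{Vixo}), the point is that $V_i(x,\sqrt{t}) \asymp V_i(\sqrt{t})$ whenever $|x| \leq \sqrt{t}$, since then the Gaussian factor in (\ref{Vixo}) is the same on both sides (one may take $b'=b$). Since $o_i \in K_i$ and $|x|$ is comparable on $E_i$ to $d_i(x,\partial K_i)+e$, there is a constant $C_1=C_1(K)$ with $d_i(o_i,x) \leq |x|+C_1$. For $\sqrt{t} \geq |x|$ and $t$ large, the triangle inequality therefore gives
\begin{equation*}
B_i(x,\sqrt{t}) \subset B_i(o_i, 3\sqrt{t}) \quad\text{and}\quad B_i(o_i,\sqrt{t}) \subset B_i(x,3\sqrt{t}).
\end{equation*}
The Li--Yau estimate on $M_i$ implies the volume doubling property on $M_i$, so $V_i(o_i,3\sqrt{t}) \leq C\,V_i(o_i,\sqrt{t})$ and $V_i(x,3\sqrt{t}) \leq C\,V_i(x,\sqrt{t})$, which sandwiches $V_i(x,\sqrt{t})$ and $V_i(\sqrt{t})$. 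Absorbing this ratio into the constant $C'$ gives (\ref{Vixo}).

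The only mildly subtle point is the split on $|y|$ in the proof of (\ref{equivalence of e}); aside from that, the lemma is a routine consequence of the triangle inequality and volume doubling, and I do not anticipate any serious obstacle.
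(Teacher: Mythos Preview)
Your proof is correct and follows essentially the same approach as the paper: both parts rest on the triangle inequality and volume doubling on $M_i$. The only tactical differences are that the paper handles the lower bound in (\ref{equivalence of e}) without a case split (from $|y|\leq |x|+\delta+d(x,y)$ it gets $|x|+|y|\leq 2\sqrt{t}+\delta+d(x,y)$ directly), and for (\ref{Vixo}) the paper first swaps $d^2(x,y)$ for $|x|^2+|y|^2$ via (\ref{equivalence of e}) and then absorbs the volume ratio $V_i(o_i,\sqrt t)/V_i(x,\sqrt t)\leq C(1+|x|/\sqrt t)^{\beta}$ into the Gaussian, whereas you argue $V_i(x,\sqrt t)\asymp V_i(\sqrt t)$ directly by ball inclusion---your route is slightly cleaner here, while the paper's absorption trick is reused elsewhere (e.g.\ in the proof of Theorem~\ref{T2}).
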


\begin{proof}
Set $\delta =\mathrm{diam}K$. The triangle inequality $\left\vert
x\right\vert +\left\vert y\right\vert +\delta \geq d(x,y)$ implies%
\begin{equation}
e^{-b\frac{|x|^{2}+|y|^{2}}{t}}\leq e^{-b^{\prime }\frac{d^{2}(x,y)-\delta
^{2}}{t}}\leq C^{\prime }e^{-b^{\prime }\frac{d^{2}(x,y)}{t}}.
\label{upper e}
\end{equation}%
To prove the opposite inequality, assume that $\left\vert x\right\vert \leq 
\sqrt{t}$ (the case $\left\vert y\right\vert \leq \sqrt{t}$ is similar). The
triangle inequality%
\begin{equation*}
\left\vert y\right\vert \leq \left\vert x\right\vert +\delta +d(x,y)
\end{equation*}%
implies%
\begin{equation*}
\left\vert x\right\vert +\left\vert y\right\vert \leq 2\left\vert
x\right\vert +\delta +d(x,y)\leq 2\sqrt{t}+\delta +d(x,y),
\end{equation*}%
whence it follows that%
\begin{equation*}
\frac{\left\vert x\right\vert ^{2}+\left\vert y\right\vert ^{2}}{t}\leq
b^{\prime }\frac{d^{2}(x,y)}{t}+const,
\end{equation*}%
which completes the proof of (\ref{equivalence of e}).

To prove (\ref{Vixo}) observe first that by (\ref{equivalence of e}), the
term $d^{2}\left( x,y\right) $ in the both sides of (\ref{Vixo}) can be
replaced by $\left\vert x\right\vert ^{2}+\left\vert y\right\vert ^{2}$. The
doubling property of\ $V_{i}\left( x,r\right) $ yields%
\begin{equation*}
\frac{V_{i}\left( o_{i},\sqrt{t}\right) }{V_{i}\left( x,\sqrt{t}\right) }%
\leq C\left( 1+\frac{\left\vert x\right\vert }{\sqrt{t}}\right) ^{\beta
}\leq Ce^{\varepsilon \frac{\left\vert x\right\vert ^{2}}{t}},
\end{equation*}%
for arbitrarily small $\varepsilon >0$, which implies that%
\begin{eqnarray}
\frac{C}{V_{i}\left( x,\sqrt{t}\right) }e^{-b\frac{\left\vert x\right\vert
^{2}+\left\vert y\right\vert ^{2}}{t}} &\leq &\frac{C^{\prime }}{V_{i}\left(
o,\sqrt{t}\right) }e^{\varepsilon \frac{\left\vert x\right\vert ^{2}}{t}%
}e^{-b\frac{\left\vert x\right\vert ^{2}+\left\vert y\right\vert ^{2}}{t}} 
\notag \\
&\leq &\frac{C^{\prime }}{V_{i}\left( o,\sqrt{t}\right) }e^{-b^{\prime }%
\frac{\left\vert x\right\vert ^{2}+\left\vert y\right\vert ^{2}}{t}}.
\label{Vo}
\end{eqnarray}%
The opposite inequality is proved similarly.
\end{proof}

\begin{proof}[Proof of Theorem \protect\ref{T2}$\left( a\right) $]
We consider the same cases as in Theorem \ref{T1} and use the same estimates
of all the terms in (\ref{general full estimate}), except for the Dirichlet
heat kernel. Note that the case $\left( ii\right) _{3}$ cannot occur because
\thinspace $x,y$ are at the same end $E_{i}$.

$\left( i\right) $ Assume that all ends are subcritical. Substituting (\ref%
{Dirichlet subcritical}), (\ref{Ppm}), (\ref{central integral subcritical}),
(\ref{hitting subcritical}) and (\ref{hitting derivative subcritical}) into (%
\ref{general full estimate}), we obtain 
\begin{align}
p(t,x,y)\asymp & C\frac{D(x,t)D(y,t)}{V_{i}(x,\sqrt{t})}e^{-b\frac{d^{2}}{t}}
\notag \\
& +\frac{C}{V(\sqrt{t})}\left( 1+D(x,t)+D(y,t)\right) e^{-b\frac{\left\vert
x\right\vert ^{2}+\left\vert y\right\vert ^{2}}{t}}.  \label{D x+y}
\end{align}%
By (\ref{function D}) and the assumption $\sqrt{t}\leq \min \left(
\left\vert x\right\vert ,\left\vert y\right\vert \right) $ we have 
\begin{equation*}
D\left( x,t\right) =D\left( y,t\right) =1
\end{equation*}%
and, hence, 
\begin{equation}
p\left( t,x,y\right) \asymp \frac{C}{V_{i}(x,\sqrt{t})}e^{-b\frac{%
d^{2}\left( x,y\right) }{t}}+\frac{C}{V\left( \sqrt{t}\right) }e^{-b\frac{%
\left\vert x\right\vert ^{2}+\left\vert y\right\vert ^{2}}{t}}.  \label{x+y}
\end{equation}%
Using the volume doubling property of $V_{i}$, we obtain 
\begin{align}
\frac{1}{V(\sqrt{t})}e^{-b\frac{\left\vert x\right\vert ^{2}+\left\vert
y\right\vert ^{2}}{t}}=& \frac{V_{i}(o_{i,}\sqrt{t})}{V_{\max }(\sqrt{t})}%
\frac{V_{i}(x,\sqrt{t})}{V_{i}(o_{i},\sqrt{t})}\frac{1}{V_{i}(x,\sqrt{t})}%
e^{-b\frac{\left\vert x\right\vert ^{2}+\left\vert y\right\vert ^{2}}{t}} 
\notag \\
\leq & C\left( 1+\frac{\left\vert x\right\vert }{\sqrt{t}}\right) ^{\beta }%
\frac{1}{V_{i}(x,\sqrt{t})}e^{-b\frac{\left\vert x\right\vert
^{2}+\left\vert y\right\vert ^{2}}{t}}  \notag \\
\leq & \frac{C^{\prime }}{V_{i}(x,\sqrt{t})}e^{-b^{\prime }\frac{d^{2}\left(
x,y\right) }{t}},  \label{vd max}
\end{align}%
which shows that the first term in (\ref{x+y}) is dominant, hence yielding (%
\ref{Vi}).

$\left( ii\right) $ Let at least one of the ends be critical.

$\left( ii\right) _{1}$ Let $M_{i}$ be subcritical. In this case we have as
above%
\begin{equation}
p(t,x,y)\asymp \frac{C}{V_{i}(x,\sqrt{t})}e^{-b\frac{d^{2}}{t}}+C\frac{\log t%
}{t}e^{-b\frac{\left\vert x\right\vert ^{2}+\left\vert y\right\vert ^{2}}{t}%
}.  \label{caseii1 mod}
\end{equation}%
By (\ref{or2logr}) and the volume doubling property of $M_{i}$, we obtain 
\begin{align}
\frac{\log t}{t}e^{-b\frac{\left\vert x\right\vert ^{2}+\left\vert
y\right\vert ^{2}}{t}}=& \frac{\log t}{t}V_{i}(o_{i},\sqrt{t})\frac{1}{%
V_{i}(x,\sqrt{t})}\frac{V_{i}(x,\sqrt{t})}{V_{i}(o_{i},\sqrt{t})}e^{-b\frac{%
\left\vert x\right\vert ^{2}+\left\vert y\right\vert ^{2}}{t}}  \notag \\
\leq & \frac{C}{V_{i}(x,\sqrt{t})}\left( 1+\frac{|x|}{\sqrt{t}}\right)
^{\beta }e^{-b\frac{\left\vert x\right\vert ^{2}+\left\vert y\right\vert ^{2}%
}{t}}  \notag \\
\leq & \frac{C^{\prime }}{V_{i}(x,\sqrt{t})}e^{-b^{\prime }\frac{d^{2}\left(
x,y\right) }{t}}.  \label{vd log}
\end{align}%
Substituting (\ref{vd log}) into (\ref{caseii1 mod}), we obtain (\ref{Vi}).

$\left( ii\right) _{2}$ Let $M_{i}$ be critical. Substituting (\ref%
{Dirichlet critical}), (\ref{Ppm}), (\ref{central integral critical}), (\ref%
{hitting critical}) and (\ref{hitting derivative critical}) into (\ref%
{general full estimate}), we obtain 
\begin{eqnarray}
p(t,x,y) &\asymp &C\frac{W(x,t)W(y,t)}{V_{i}(x,\sqrt{t})}e^{-b\frac{d^{2}}{t}%
}  \notag \\
&&+\frac{C}{t}\left[ U(x,t)U(y,t)+W(x,t)U\left( y,t\right) +W(y,t)U(x,t)%
\right] e^{-b\frac{\left\vert x\right\vert ^{2}+\left\vert y\right\vert ^{2}%
}{t}}.  \label{same end ii2}
\end{eqnarray}%
By (\ref{function W}) and $\sqrt{t}\leq \min \left( \left\vert x\right\vert
,\left\vert y\right\vert \right) $, we have 
\begin{equation*}
W(x,t)=W(y,t)=1.
\end{equation*}%
Substituting into (\ref{same end ii2}) we obtain 
\begin{eqnarray*}
p(t,x,y) &\asymp &\frac{C}{V_{i}(x,\sqrt{t})}e^{-b\frac{d^{2}}{t}} \\
&&+\frac{C}{t}\left[ U(x,t)U(y,t)+U\left( y,t\right) +U(x,t)\right] e^{-b%
\frac{\left\vert x\right\vert ^{2}+\left\vert y\right\vert ^{2}}{t}}.
\end{eqnarray*}%
Since $U$ is bounded, (\ref{vd max}) implies that the second term is
dominated by the first one, which yields (\ref{Vi}).
\end{proof}

\begin{proof}[Proof of Theorem \protect\ref{T2}$\left( b\right) $]
Let $V_{i}\left( r\right) \approx V_{\max }\left( r\right) $. In the view of
part $\left( a\right) $, we can assume that $\sqrt{t}>\min \left( \left\vert
x\right\vert ,\left\vert y\right\vert \right) $. Since by the doubling
property of $V_{i}$%
\begin{equation*}
\frac{C}{V_{i}\left( x,\sqrt{t}\right) }e^{-b\frac{d^{2}\left( x,y\right) }{t%
}}\asymp \frac{C^{\prime }}{V_{i}\left( y,\sqrt{t}\right) }e^{-b^{\prime }%
\frac{d^{2}\left( x,y\right) }{t}}
\end{equation*}%
(cf. (\ref{Vo})), the estimate (\ref{Vi}) is symmetric in $x,y$. Hence, we
can assume that $\sqrt{t}>\left\vert x\right\vert $. As in Theorem \ref{T1},
we can also assume that $\left\vert x\right\vert ,\left\vert y\right\vert $
are large enough.

$\left( i\right) $ Let all the ends be subcritical. Then we have again (\ref%
{D x+y}). Using $\sqrt{t}>\left\vert x\right\vert $ and (\ref{equivalence of
e}), we can replace $e^{-b\frac{\left\vert x\right\vert ^{2}+\left\vert
y\right\vert ^{2}}{t}}$ in the right hand side of (\ref{D x+y}) by $e^{-b%
\frac{d^{2}\left( x,y\right) }{t}}$. Using further (\ref{Vixo}), we can
replace $V_{i}\left( x,\sqrt{t}\right) $ by $V_{i}\left( \sqrt{t}\right) $
and, hence, by $V\left( \sqrt{t}\right) $, which yields%
\begin{equation*}
p\left( t,x,y\right) \asymp \frac{C}{V(\sqrt{t})}\left(
D(x,t)D(y,t)+1+D(x,t)+D(y,t)\right) e^{-b\frac{d^{2}\left( x,y\right) }{t}},
\end{equation*}%
and which implies (\ref{Vi}) since $D(x,t)$, $D(y,t)$ are bounded.

$\left( ii\right) $ Let at least one of the ends be critical. Then by $%
V_{i}\left( r\right) \approx V\left( r\right) $, the end $M_{i}$ has to be
critical, too. As in the case $\left( ii\right) _{2}$ of the proof of
Theorem \ref{T2}$\left( a\right) $, we obtain again (\ref{same end ii2}),
where by (\ref{equivalence of e}) we can replace $e^{-b\frac{\left\vert
x\right\vert ^{2}+\left\vert y\right\vert ^{2}}{t}}$ in the right hand side
of (\ref{same end ii2}) by $e^{-b\frac{d^{2}}{t}}.$ Using further (\ref{Vixo}%
), we replace $V_{i}\left( x,\sqrt{t}\right) $ by $V_{i}\left( \sqrt{t}%
\right) \approx V\left( \sqrt{t}\right) \approx t$, which yields 
\begin{eqnarray*}
p(t,x,y) &\asymp &\frac{C}{t}\left[ W(x,t)W(y,t)+U(x,t)U(y,t)+W(x,t)U\left(
y,t\right) +W(y,t)U(x,t)\right] e^{-b\frac{d^{2}}{t}} \\
&=&\frac{C}{t}\left\{ W(x,t)+U(x,t)\right\} \left\{ W(y,t)+U(y,t)\right\}
e^{-b\frac{d^{2}}{t}}.
\end{eqnarray*}%
Using (\ref{U+W}), we conclude (\ref{Vi}).
\end{proof}

\begin{proof}[Proof of Theorem \protect\ref{T3}]
As in Theorem \ref{T1}, we can assume that $\left\vert x\right\vert
,\left\vert y\right\vert $ are large enough. Since $\sqrt{t}\geq \min \left(
\left\vert x\right\vert ,\left\vert y\right\vert \right) $ and the both
estimates (\ref{T3i}) and (\ref{T3ii}) are symmetric in $x$, $y$, so we can
assume without loss of generality that $\sqrt{t}\geq \left\vert x\right\vert 
$. Then, by Lemma \ref{lemma e}, the function $V_{i}\left( x,\sqrt{t}\right) 
$ in the estimates (\ref{Dirichlet subcritical}) and (\ref{Dirichlet
critical}) can be replaced by $V_{i}\left( \sqrt{t}\right) $.

$\left( i\right) $ Assume that all ends are subcritical. Applying (\ref%
{equivalence of e}) to (\ref{D x+y}) and observing that the function $D$ is
bounded, we obtain (\ref{T3i}).

$\left( ii\right) $ Let at least one of the ends be critical. Since $M_{i}$
is subcritical, substituting (\ref{Dirichlet subcritical}), (\ref{Ppm}), (%
\ref{central integral critical}), (\ref{hitting subcritical}) and (\ref%
{hitting derivative subcritical}) into (\ref{general full estimate}), we
obtain 
\begin{align*}
p(t,x,y)\asymp & C\frac{D(x,t)D(y,t)}{V_{i}(\sqrt{t})}e^{-b\frac{d^{2}}{t}}+%
\frac{C}{t}e^{-b\frac{\left\vert x\right\vert ^{2}+\left\vert y\right\vert
^{2}}{t}}  \notag \\
& +C\frac{\log t}{t}\left( D(x,t)+D(y,t)\right) e^{-b\frac{\left\vert
x\right\vert ^{2}+\left\vert y\right\vert ^{2}}{t}},
\end{align*}%
which together with (\ref{equivalence of e}) implies (\ref{T3ii}).
\end{proof}

\begin{acknowledgement}
This work was completed during the stay of the first and second authors in
the Institute of Mathematical Sciences of Chinese University of Hong Kong.
The authors are grateful to CUHK for the hospitality and support. The
authors would like to thank Professor Yuji Kasahara for sending his preprint.
\end{acknowledgement}


\begin{thebibliography}{99}
\bibitem{Aronson} {\footnotesize D. G. Aronson \textit{Bounds for the
fundamental solution of a parabolic equation}. Bull. Amer. Math. Soc. 73
(1967) 890-896. }

\bibitem{Benjamini-Chavel-Feldman} {\footnotesize I. Benjamini, I. Chavel,
E. A. Feldman, \textit{Heat kernel lower bounds on Riemannian manifolds
using the old ideas of Nash}. Proc. London Math. Soc., 72 (1996) 215--240. }

\bibitem{Coulhon-Grigoryan} {\footnotesize T. Coulhon, A. Grigor'yan, 
\textit{On-diagonal lower bounds for heat kernels on no-compact manifolds
and Markov chains}. Duke Math. J., 89 (1997) no. 1, 133-199. }

\bibitem{Davies CTM} {\footnotesize E.B. Davies, Heat kernels and spectral
theory. Cambridge Tracts in Mathematics 92, Cambridge Univ. Press, 1990. }

\bibitem{Davies 97} {\footnotesize E.B. Davies, \textit{Non-Gaussian aspects
of heat kernel behaviour}. J. London Math. Soc. 55 (1997) 105-125. }

\bibitem{Fabes-Stroock} {\footnotesize E. B. Fabes, D. W. Stroock, \textit{A
new proof of Moser's parabolic Harnack inequality using the old idea of Nash}%
. Arch. Rational Mech. Anal., 96 (1986) no. 4, 327--338. }

\bibitem{Grigoryan 1991} {\footnotesize A. Grigor'yan, \textit{The heat
equation on noncompact Riemannian manifolds} (in Russian). Mat. Sb., 182
(1991) no. 1, 55-87; English translation in Math. USSR-Sb., 72 (1992) no. 1,
47--77. }

\bibitem{G Revista} {\footnotesize A. Grigor'yan, \textit{Heat kernel upper
bounds on a complete non-compact manifold}. Revista Matem\'{a}tica
Iberoamericana, 10 (1994) 395-452. }

\bibitem{G 1999} {\footnotesize A. Grigor'yan, \textit{Analytic and
geometric background of recurrence and non-explosion of the Brownian motion
on Riemannian manifolds}. Bull. AMS, 36 (1999) 135--249. }

\bibitem{G AMS} {\footnotesize A. Grigor'yan, Heat kernel and analysis on
manifolds. AMS/IP Studies in Advanced Mathematics 47, AMS, 2009. }

\bibitem{G-SC letter} {\footnotesize A. Grigoryan and L. Saloff-Coste, 
\textit{Heat kernel on connected sums of Riemannian manifolds}. Math.
Research Letters, 6 (1999) no. 3-4, 307--321. }

\bibitem{G-SC Dirichlet} {\footnotesize A.Grigor'yan and L. Saloff-Coste, 
\textit{Dirichlet heat kernel in the exteior of a compact set}. Comm. Pure
Appl. Math, 55 (2002), 93--133. }

\bibitem{G-SC hitting} {\footnotesize A. Grigor'yan and L. Saloff-Coste, 
\textit{Hitting probabilities for Brownian motion on Riemannian manifolds}.
J. Math. Pures Appl., 81 (2002) no. 2, 115--142. }

\bibitem{G-SC stability} {\footnotesize A. Grigor'yan and L. Saloff-Coste, 
\textit{Stability results for Harnack inequalities}. Ann. Inst. Fourier,
Grenoble, 55 (2005) no.3, 825--890. }

\bibitem{G-SC ends} {\footnotesize A. Grigor'yan and L. Saloff-Coste, 
\textit{Heat kernel on manifolds with ends}. Ann. Inst. Fourier, Grenoble,
59 (2009) no. 5, 1917--1997. }

\bibitem{G-SC FK} {\footnotesize A. Grigor'yan and L. Saloff-Coste, \textit{%
Surgery of the Faber-Krahn inequality and applications to heat kernel bounds}%
, J. Nonlinear Analysis, 131 (2016) 243--272. }

\bibitem{Kasahara} {\footnotesize Y. Kasahara and S. Kotani, \textit{%
Tauberian theorem for harmonic mean of Stieltjes transforms and its
applications to linear diffusions}. To appear in Osaka J. Math. }

\bibitem{Kuz'menko-Molchanov} {\footnotesize Yu. T. Kuz'menko and S. A.
Molchanov, \textit{Counterexamples of Liouville type theorems}, (in Russian)
Vestnik Moscov. Univ. Ser. I Mat. Mekh., (1976) no. 6, 39--43; Engl. transl.
Moscow Univ. Math. Bull., 34 (1979) 35--39. }

\bibitem{Li-Yau} {\footnotesize P. Li and S.-T. Yau, \textit{On the
parabolic kernel of the Schr\"odinger operator}, Acta Math., 156 (1986) no.
3-4, 153-201. }

\bibitem{SC 1992} {\footnotesize L. Saloff-Coste, \textit{A note on
Poincar\'e, Sobolev, and Harnack inequalities}. Internat. Math. Res. Notices
(1992), no. 2, 27--38. }

\bibitem{SC LNS} {\footnotesize L. Saloff-Coste, Aspects of Sobolev type
inequalities. London Math. Soc. Lecture Notes Series 289, Cambridge Univ.
Press, 2002. }

\bibitem{Woess} {\footnotesize W. Woess, Random walks on infinite graphs and
groups, Cambride tracts in mathematics 138, Cambridge Univ. Press, 2000. }
\end{thebibliography}
\end{document}